\documentclass[10pt]{amsart}
\usepackage{kantlipsum} 

\usepackage[margin=1.25 in]{geometry} 

\usepackage{graphicx}
\usepackage{url}
\usepackage[pagebackref=false,colorlinks]{hyperref}
\hypersetup{citecolor={rgb,256:red,0;green,80;blue,170},
linkcolor={cyan}}

\usepackage{mathptmx}     
\usepackage{amsmath,amsfonts,amsthm,amssymb,color}
\usepackage{mathtools}
\usepackage{tikz-cd}

\usepackage{cleveref}

\usepackage{xparse}
\usepackage{calrsfs}
\DeclareMathAlphabet{\pazocal}{OMS}{zplm}{m}{n}

\usepackage{spectralsequences} 

\NewDocumentCommand{\tens}{e{_^}}{%
  \mathbin{\mathop{\otimes}\displaylimits
    \IfValueT{#1}{_{#1}}
    \IfValueT{#2}{^{#2}}
  }%
}

\newcommand{\F}{\mathbb{F}_2}
\newcommand{\Fp}{\mathbb{F}_p}
\newcommand{\free}{\mathrm{Free}}
\newcommand{\id}{\mathrm{id}}
\newcommand{\E}{\mathbb{E}}
\newcommand{\Sp}{\mathbb{S}}
\newcommand{\p}{\mathbf{P}}
\newcommand{\B}{\mathrm{Bar}_{\bullet}}
\newcommand{\taq}{\overline{\mathrm{TAQ}}}

\newcommand{\R}{\pazocal{R}}

\newcommand{\Lie}{\mathrm{Lie}}
\newcommand{\spla}{\Lie^{\pi}_{\Fp, \E_{\infty}}}

\newcommand{\N}{\mathbb{N}}
\newcommand{\Mod}{\mathrm{Mod}}

\newtheorem{theorem}{Theorem}[section]
\newtheorem{corollary}[theorem]{Corollary}

\newtheorem{lemma}[theorem]{Lemma}

\newtheorem{proposition}[theorem]{Proposition}

\theoremstyle{definition}
\newtheorem{definition}[theorem]{Definition}

\newtheorem{remark}[theorem]{Remark}
\newtheorem{notation}[theorem]{Notation}
\newtheorem{construction}[theorem]{Construction}

\begin{document}
\title{Operations on spectral partition Lie algebras and TAQ cohomology}
\author{Adela YiYu Zhang}
\address{Copenhagen Centre for Geometry and Topology, Copenhagen University, Universitetsparken 5, DK-2100 Copenhagen}
\email{yz@math.ku.dk}

\begin{abstract}
  We determine all natural operations and their relations on the homotopy groups of spectral partition Lie algebras, which coincide with  $\mathbb{F}_p$-linear topological Andr\'{e}-Quillen cohomology operations at any prime. We construct unary operations and a shifted restricted Lie algebra structure on the homotopy groups of spectral partition Lie algebras.  Then we prove a composition law for the unary operations, as well as a compatibility condition between unary operations and the shifted Lie bracket with restriction up to a unit for the restriction. Comparing with Brantner-Mathew's result on  the ranks of the homotopy groups of free spectral partition Lie algebras, we deduce that these generate all natural operations, thereby also recovering unpublished results of Kriz and Basterra-Mandell on $\mathbb{F}_p$-linear TAQ cohomology operations.  As a corollary, we determine the structure of natural operations on mod $p$ $\mathbb{S}$-linear TAQ cohomology.
\end{abstract}
\maketitle
\setcounter{tocdepth}{1}
\tableofcontents
\section{Introduction} 
 Spectral partition Lie algebras are the key objects in the emerging field of spectral formal moduli problems in characteristic $p$. Work of Brantner and Mathew \cite{bm} showed that  there is an equivalence of $\infty$-categories between spectral formal moduli problems over $\Fp$ and spectral partition Lie algebras, generalizing the characteristic 0 phenomenon studied by Drinfeld \cite{drinfeld}, Pridham \cite{pridham}, Lurie \cite{lurie11}, and many others.  In particular, this equivalence establishes spectral partition Lie algebras over $\Fp$ as divided power algebras Koszul dual to non-unital $\E_\infty$-$\Fp$-algebras.
 
 Since spectral partition Lie algebras are algebras over a certain  monad $\Lie^{\pi}_{\Fp,\E_\infty}$, the homotopy groups of  free spectral partition Lie algebras $\Lie^{\pi}_{\Fp,\E_\infty}(\Sigma^{i_1}\Fp\oplus\cdots\oplus\Sigma^{i_k}\Fp)$ parametrize all natural $k$-ary operations on the homotopy groups of spectral partition Lie algebras as $(i_1,\ldots,i_k)$ varies. In \cite[Theorem 1.20]{bm}, Brantner and Mathew obtained  bases for homotopy groups of free spectral partition Lie algebras  on single generators via an isotropy spectral sequence studied by Arone, Dwyer, and Lesh \cite{ADL}. Then they propagated the result to multiple generators by means of an EHP sequence and a decomposition of the partition complex developed by Arone and Brantner in \cite{young}. Nonetheless, this method did not provide explicit descriptions of the nature of the operations, nor were the relations among the operations clarified. 
 
  On the other hand,  $\Fp$-linear TAQ cohomology $\mathrm{TAQ}^*_{\Fp}(R;\Fp)$ of $\E_\infty$-$\Fp$-algebras $R$ (\Cref{def: FpTAQ}), first constructed by Kriz and Basterra \cite{taq}, has representing objects trivial square-zero extensions \cite[1.8.2]{lawson}. Hence reduced $\Fp$-linear TAQ cohomology groups of trivial algebras $\Fp\oplus\Sigma^{i_1}\Fp\oplus\cdots\oplus\Sigma^{i_k}\Fp$ parametrize all natural $k$-ary operations. By \cite[Proposition 5.35]{bm}, there is an isomorphism $$\pi_*(\Lie^{\pi}_{\Fp,\E_\infty}(\Sigma^{i_1}\Fp\oplus\cdots\oplus\Sigma^{i_k}\Fp))\oplus\Fp\cong \mathrm{TAQ}^{-*}_{\Fp}(\Fp\oplus\Sigma^{-i_1}\Fp\oplus\cdots\oplus\Sigma^{-i_k}\Fp;\Fp). $$ Hence natural operations on  the homotopy groups of spectral partition Lie algebras agree with cohomology operations on the (reduced) $\Fp$-linear TAQ cohomology of $\E_\infty$-$\Fp$-algebras. In unpublished work, Kriz computed the $\F$-linear TAQ cohomology on a connective generator in \cite[Theorem 1.10]{kriz}. 
  Around the same time, Basterra and Mandell announced a computation of unary operations and their relations as the Koszul dual to Dyer-Lashof operations on $\Fp$-linear TAQ cohomology of connective objects for $p>2$  and observed a shifted restricted Lie algebra structure, but a proof never appeared. 
 
\

In this paper, we use a dual bar spectral sequence and the machinery of classical Koszul duality developed by Priddy \cite{priddy} to identify the structure of weight powers of $p$ operations on the homotopy groups of spectral partition Lie algebras and $\Fp$-linear TAQ cohomology. Roughly speaking, this structure is given by a collection of unstable Ext groups over the Dyer-Lashof algebra, with composition product given by a sheared Yoneda product. The verification of the law of composition makes use of a general result of Brantner  \cite{brantner} that demonstrates the compatibility of the algebraic Koszul duality on the $E^2$-page of the (dual) bar spectral sequence with the $\infty$-categorical monadic Koszul duality that the $E^\infty$-page assembles to. These operations are stable under suspension, and in the colimit we obtain the universal algebra of unary operations, which is the Koszul dual of the  Dyer-Lashof algebra.

Then we construct a shifted Lie bracket on the homotopy groups of spectral partition Lie algebras and use a homotopy fixed points spectral sequence to detect a restriction map on this shifted Lie bracket. The restriction on an odd degree homotopy class when $p>2$ and any homotopy class when $p=2$ coincides with the bottom unary operation up to a unit. Furthermore, brackets of unary operations that are not iterations of the restriction always vanish. Comparing with Brantner and Mathew's additive computation of the homotopy groups of the free spectral partition Lie algebras on finitely many generators \cite[Theorem 1.20]{bm}, we conclude that these are all the natural operations and obtain the target category in the sense that it records all algebraic structure on the homotopy groups of spectral partition Lie algebras.
 
  As an immediate corollary, we determine the structure of operations on the mod $p$ TAQ cohomology $\mathrm{TAQ}^*_{\mathbb{S}}(-;\Fp)$ of $\E_\infty$-$\mathbb{S}$-algebras. The  $\mathbb{S}$-linear mod $p$ TAQ cohomology operations consist of $\Fp$-linear TAQ cohomology operations, as well as mod $p$ Steenrod operations. The relations among the unary operations are given by the Adem relations and the Nishida relations on mod $p$ cohomology \cite{nishidaodd}.

\subsection{Statement of results}
In section \ref{section2}, we summarize recent results in \cite{bm} on spectral partition Lie algebras over $\Fp$. These are algebras over a certain monad $\Lie^{\pi}_{\Fp,\E_\infty}$ on the category of $\Fp$-modules. Then we describe the relation between spectral partition Lie algebras and  the reduced $\Fp$-linear TAQ spectrum $$\taq(R)\simeq |\B(\id, \mathbb{E}_\infty^{\mathrm{nu}}\tens \Fp, R)|$$ of non-unital $\E_\infty$-$\Fp$-algebras $R$, which is a variant of the TAQ spectrum  constructed by Basterra \cite[\S 5]{taq}. Here $\mathbb{E}_\infty^{\mathrm{nu}}\tens \Fp$ stands for the monad associated with the free non-unital $\E_\infty$-$\Fp$-algebra functor.  The $n$th reduced $\Fp$-linear TAQ cohomology group of $R$ is given by $$\taq^n(R)=[\Sigma^{-n}\taq(R), \Fp]_{\Mod_{\Fp}}\cong \pi_{-n}(\taq(R)^{\vee}).$$  It follows from   \cite[Proposition 5.35]{bm} that there is an isomorphism $$\pi_m(\Lie^{\pi}_{\Fp,\E_\infty}(\Sigma^{i_1}\Fp\oplus\cdots\oplus\Sigma^{i_k}\Fp))\cong \taq^{-m}(\Sigma^{-i_1}\Fp\oplus\cdots\oplus\Sigma^{-i_k}\Fp) $$  for any $m$ and tuple $(i_1,\ldots,i_k)$ of integers. The left hand side is the group of natural transformations $$\prod^k_{l=1}\pi_{i_l}(-)\rightarrow\pi_{m}(-)$$ of functors from the category of spectral partition Lie algebras to Sets, whereas the right hand side is the group of cohomology operations (or natural transformations) $$\prod^k_{l=1}\taq^{-i_l}(-)\rightarrow\taq^{-m}(-).$$ Hence the structure of natural operations on the homotopy groups of spectral partition Lie algebras over $\Fp$ coincide with that on  $\taq^*(A)$ for $A$ a non-unital $\E_\infty$-$\Fp$-algebra.

 In section \ref{section3}, we examine the bar spectral sequence
 \begin{equation}
     \widetilde{E}_{s,t}^2=\pi_s(\B(\id, \p_{\R},\pi_*(A) )_t \Rightarrow \pi_{s+t}(|\B(\id, \E^{\mathrm{nu}}_\infty\tens \Fp, A)|)=\taq_{s+t}(A)
 \end{equation}
and the dual bar spectral sequence 
\begin{equation}\label{sseq: dual bar}
   E_{s,t}^2=\pi_s(\B(\id, \p_{\R},\pi_*(A))^{\vee} )_t \Rightarrow \pi_{s+t}(|\B(\id, \E^{\mathrm{nu}}_\infty\tens \Fp, A|^{\vee})\cong\pi_{s+t}(\Lie^{\pi}_{\Fp,\E_\infty}(A^{\vee})) 
\end{equation}
for trivial $\E^{\mathrm{nu}}_\infty$-$\Fp$-algebras $A$ that are bounded above and finite type, i.e. each homotopy group is a finite $\Fp$-module.  Here $\p_{\R}$ is the monad associated to the free functor that first takes the graded polynomial algebra on the free unstable module over the Dyer-Lashof algebra,  then identifies $x^{\tens 2}$ with the bottom operation $Q^{|x|}$ if $p=2$ and $x^{\tens p}$ with the bottom operation $Q^{|x|/2}(x)$ if $p>2$ and $|x|$ is even, and finally imposes the Cartan formula. (See \Cref{def: polyR}.) When $A$ is a direct sum of shifts of $\Fp$ considered as a trivial $\E^{\mathrm{nu}}_\infty$-$\Fp$-algebra, the $E^2$-page of the dual bar spectral sequence (\ref{sseq: dual bar}) parametrizes natural operations and their relations on the \textit{Andr\'{e}-Quillen cohomology} $$\mathrm{AQ}^*_{\p_{\R}}(M):=\pi_{-*}(\B(\id, \p_{\R},M)^{\vee} )$$ of algebras $M$ over the monad $\p_{\R}$, whereas the $E^\infty$-page is isomorphic to the homotopy groups of the free spectral partition Lie algebra on $A^{\vee}$. 

In \Cref{section4}, we study the (dual) bar spectral sequence when $A$ is a finite sum of shifts of $\Fp$. The strategy is the following: first we find a suitable factorization of the indecomposables functor $Q^{\p_{\R}}_{\Mod_{\Fp}}$, which allows us to replace the $E^1$-page with a much smaller bi-simplicial object.  This strategy is a modification of the method in \cite[Proposition 4.19]{bhk} and was employed in \cite{me} to compute the mod $p$ Quillen homology of spectral Lie algebras. Then we compute the $E^2$-page via the (dual) Grothendieck spectral sequence as in \cite{behrensrezk}, whose $E^2$-page is the bigraded homotopy of a double bar complex. We deduce by sparseness in \Cref{E2unary} and \Cref{E2unaryodd} that the dual bar spectral sequence for $A=\Sigma^j\Fp$ and the associated Grothendieck spectral sequence both collapse on the $E^2$-page by sparseness when $p=2$ or $p>2$ and $j$ is odd. If $p>2$ and $j$ is even, we show in \Cref{cor: E2unaryeven} that both spectral sequences collapse at weight $p^k$ for all $k\geq 0$ by a degree comparison with the $\Fp$-basis of $\pi_*(\spla(A))$ given by Brantner-Mathew in \cite{bm}. In particular, the weight $p^k$ unary operations on the Andr\'{e}-Quillen cohomology $\mathrm{AQ}^*_{\p_{\R}}(M)$ are parametrized by a ringoid (\Cref{ringoidmod2} and \Cref{ringoidodd}) that encodes certain unstable Ext groups over the Dyer-Lashof algebra $\R$, which follows from the algebraic Koszul duality machinery developed by Priddy \cite{priddy}. This allows us to construct all unary operations of weights powers of $p$ on the homotopy groups of spectral partition Lie algebras in \Cref{unary}.

In \Cref{sec: Lie}, we produce a shifted  Lie bracket $$[-,-]:\pi_m(A)\tens \pi_n(A)\rightarrow\pi_{m+n-1}(A)$$ with restriction $x\mapsto x^{\{p\}}$ for any spectral partition Lie algebra $A$ that comes from a construction in $\Mod_{\Fp}(\mathrm{Sp})$. Equivalently, for any $\E^{\mathrm{nu}}_{\infty}$-$\Fp$-algebra $A$,  there is a shifted  Lie bracket with restriction
$$[-,-]: \taq^m(A)\tens \taq^n(A)\rightarrow \taq^{m+n+1}(A).$$ We also determine the interaction of this bracket with the unary operations of \Cref{unary} in \Cref{compatible}. The main tool is the associated homotopy fixed points spectral sequence 
\begin{equation}
        E^2_{s,t}=\bigoplus_{n} H^s\Big(\Sigma_n, \pi_t\big(\Lie_{dg}^{s}(n)\tens A^{\tens n}\big)\Big)\Rightarrow \bigoplus_{n}\pi_{t-s}\Big(\big(\Lie_{dg}^{s}(n)\tens A^{\tens n}\big)^{h\Sigma_n}\Big),
\end{equation}
which consists of a copy of the free shifted restricted Lie algebra on $\pi_*(A)$ along the line $s=0$ and degenerates in all cases and weights of interest.

Section \ref{section5} is devoted to determining the entire algebraic structure on the homotopy groups of spectral partition Lie algebras and mod $p$ TAQ cohomology groups. First we determine the composition law and the Adem relations among the weight $p$ unary operations. Inspired by the work of Brantner regarding the structure of operations on the Lubin-Tate theory of spectral Lie algebras in \cite[section 4]{brantner}, we encode unary operations on the homotopy groups of free spectral partition Lie algebras with a power ring $\pazocal{P}$ (\Cref{powerring}).
 Roughly speaking, this power ring arises from equipping the ringoid parametrizing weight $p^k$ unary operations on the $E^2$-page of the dual bar spectral sequence  with a twisted composition product. Note that our notion of power ring differs from that of Brantner in that we do not require bilinearity. This is to accommodate the identification of the bottom operation with the restriction map (\Cref{restricted}), which is nonadditive. However, on the associated graded of the bar filtration (and the filtration associated with the Grothendieck spectral sequence when $p=2$), this power ring does act additively, see \Cref{rmk: powerring}.

 Now we can state the main result of this paper, which records all natural operations and their relations on the homotopy groups of a spectral partition Lie algebra over $\Fp$ and the reduced TAQ cohomology of  $\E_\infty$-$\Fp$-algebras.

  \begin{theorem}(\Cref{unarymod2}, \Cref{compatible}, and \Cref{generation})\label{maintheorem1}
  \begin{enumerate}
      \item On a class $x$ of a degree $j$ in the homotopy groups of a spectral partition Lie algebra over $\F$, or equivalently the reduced TAQ cohomology of an $\E_\infty$-$\F$-algebra, there are weight $2$ operations  $R^{i}$ of degree $-i$ satisfying $i>-j$. The Adem relations are given by  $$R^a R^b(x)=\sum_{a+b-c\geq 2c,\,\, c\geq a-b}\binom{b-c-1}{a-2c} R^{a+b-c}R^c(x)$$
 for all $a,b\in\mathbb{Z}$ satisfying $b-j\leq a< 2b$ and $b>-j$. These operations are additive unless $i=-j+1$.

\item There is also a shifted Lie algebra structure with restriction map $x^{\{2\}}=R^{-|x|+1}$ on the homotopy groups of a spectral partition Lie algebra over $\F$. The restriction on a sum of classes $x$ and $y$ in different degrees is given by $$(x+y)^{\{2\}}= x^{\{2\}}+ y^{\{2\}}+ [x,y].$$ The bracket is compatible with the unary operations in the sense that $[y, \alpha (x)]=0$ for any homotopy class $x,y$ and  unary operation $\alpha$ of weight greater than one that is not an iteration of the restriction. 
\item 
The operations $R^i$ and the shifted Lie bracket with restriction generate all natural operations under the above relations. A basis for unary operations on a degree $j$ class $x$ is given by the collection of all monomials $R^{i_1}R^{i_2}\cdots R^{i_l}x^{\{2\}^r}$ such that $i_l>-2^rj+2^r$ and $i_m\geq 2i_{m+1}$ for $1\leq m<l$.

  \end{enumerate}
  \end{theorem}

\begin{theorem}(\Cref{unaryodd},  \Cref{compatible}, \Cref{cor: betabottomadditive}, and \Cref{generation})\label{maintheorem2}
Suppose that $p$ is an odd prime.
\begin{enumerate}
    \item 
   On a class $x$ of a degree $j$ in the homotopy groups of a spectral partition Lie algebra over $\Fp$, or equivalently the reduced TAQ cohomology of an $\E_\infty$-$\Fp$-algebra, there are  weight $p$ unary operations are given $\beta^\epsilon R^{i}$ of degree $-2(p-1)i-\epsilon$  for $\epsilon=0,1$ and $i>-j/2$. These operations are additive unless $R^{(-j+1)/2}$ is acting on an odd class in degree $j$. The Adem relations are given by 
    \begin{equation*}
   \beta R^a \beta R^b(x)=\sum_{a+b-c> pc, 2c>-j}(-1)^{a-c+1}\binom{(p-1)(b-c)-1}{a-pc-1} \beta R^{a+b-c} \beta R^c(x) 
\end{equation*}
 for all $a,b\in\mathbb{Z}$ satisfying $a \leq pb$, $2b>-j$, $2a>2(p-1)b-j$ ,
\begin{align*}
   R^a \beta R^b(x)=&\sum_{a+b-c\geq pc,2c>-j}(-1)^{a-c}\binom{(p-1)(b-c)}{a-pc} \beta P^{a+b-c} R^c(x)\\&-\sum_{a+b-c> pc, 2c>-j}(-1)^{a-c}\binom{(p-1)(b-c)-1}{a-pc-1}  R^{a+b-c}\beta R^c(x)
\end{align*}
for all $a,b\in\mathbb{Z}$ satisfying  $a \leq pb$, $2b>-j$, $2a>2(p-1)b+1-j$, and
\begin{equation*}
   \beta^\epsilon R^a R^b(x)=\sum_{a+b-c\geq pc, 2c>-j}(-1)^{a-c}\binom{(p-1)(b-c)-1}{a-pc} \beta^\epsilon R^{a+b-c}R^c(x)
\end{equation*}
for all $a,b\in\mathbb{Z}$ satisfying   $ a < pb$, $2b>-j$, $2a>2(p-1)b-j$, and $\epsilon\in\{0,1\}$.

\item  For all odd $j$ and $x$ a homotopy class in degree $j$, the restriction $x^{\{p\}}$ is the bottom operation $R^{(-j+1)/2}(x)$ up to a unit $\lambda_j$, i.e., $[y,\lambda_j R^{(-j+1)/2}(x)]=[[\cdots[[y,x],x]\cdots],x]$ for any class $y$, where bracketing with $x$ is iterated $p$ times on the right hand side. The restriction map on a sum of classes $x$ and $y$ in odd degrees $j\neq k$ is given by $$(x+y)^{\{p\}}=\lambda_j R^{(-j+1)/2}(x)+\lambda_k R^{(-k+1)/2}(y)+\sum_{i=1}^{p-1} \frac{s_i}{i}(x,y),$$ where $s_i$ is the coefficient of $t^{i-1}$ in the formal expression $\mathrm {ad} (tx+y)^{p-1}(x)$. Furthermore, $[y, \alpha (x)]=0$ for any homotopy class $x,y$ and  $\alpha$ a unary operation of weight greater than one, unless $x$ is in odd degree and $\alpha$ an iteration of the restriction.
\item The operations $\beta^{\epsilon}R^i$ and the shifted  Lie bracket generate all natural operations under the above relations. A basis for unary operations on a degree $j$ class with $j$ odd is given by all monomials $\beta^{\epsilon_1}R^{i_1}\beta^{\epsilon_2}R^{i_2}\cdots\beta^{\epsilon_l}R^{i_l}$ such that $2i_l>-j$ and $i_m\geq pi_{m+1}+\epsilon_{m+1}$ for $1\leq m<l$. If $j$ is even, a basis is given by $\beta^{\epsilon_1}R^{i_1}\beta^{\epsilon_2}R^{i_2}\cdots\beta^{\epsilon_l}R^{i_l}B^{\epsilon}$ such that $2i_l>-(1+\epsilon)j-\epsilon$  and $i_m\geq pi_{m+1}+\epsilon_{m+1}$ for $1\leq m<l$.
 \end{enumerate} 
 
  \end{theorem}

In order to verify the composition product and relations among the unary operations on  $\pi_*(\Lie^{\pi}_{\Fp,\E_{\infty}}(A))$ that comes from the power ring $\pazocal{P}$ (\Cref{powerring}), we make use of a general result of Brantner \cite[Theorem 3.5.1 and 4.3.2]{brantner}, which explains the compatibility of the structure of additive operations on the $E^2$-page of the (dual) bar spectral sequence with that on the homotopy groups of the monadic bar construction the $E^\infty$-page assembles to when the spectral sequence degenerates on the $E^2$-page at weights powers of $p$. 

Taking the colimit along the suspension map on the power ring $\pazocal{P}$, which is induced by the canonical map $\Sigma\spla(\Sigma^k\Fp)\rightarrow\spla(\Sigma^{k+1}\Fp)$, we observe in \Cref{algebra} that the universal unary operations form an algebra Koszul dual to the Dyer-Lashof algebra, which is isomorphic to the extended Steenrod algebra but with $Sq^0=0$. Indeed, the Dyer-Lashof algebra is isomorphic to the extended Steenrod algebra for restricted Lie algebras, cf. \cite{dual}. For filtration reasons, the restriction on the shifted Lie bracket is not visible on the $E^2$-page of the dual bar spectral sequence. Instead, we use a homotopy fixed points spectral sequence to detect the restriction map in \Cref{restricted}. Thus we recover and clarify the unpublished computations of Kriz and Basterra-Mandell on connective objects via a different method. 

  Therefore the target category for the homotopy groups of a spectral partition Lie algebra or the $\Fp$-linear TAQ cohomology is the category of $\pazocal{P}$-$\Lie^{s,\rho}$-\textit{algebras} (Definition \ref{sLieP}), i.e., $\Fp$-modules $L$ over the power ring $\pazocal{P}$ together with a shifted Lie bracket and a restriction map satisfying the  conditions in part (3) of Theorem \ref{maintheorem1} and \ref{maintheorem2}. 
\begin{remark}
The shifted Lie brackets on the mod $p$ homology of spectral Lie algebras always vanish on unary operations of weight at least $p$, as was shown by Antol\'{i}n-Camerena \cite{omar} and Kjaer \cite{kjaer}. The difference lies in that all unary operations on the mod $p$ homology of spectral Lie algebras are additive, whereas on the homotopy groups of spectral partition Lie algebras, there is a non-additive unary operation, i.e., the restriction on the shifted Lie bracket.  See Remark \ref{comparebrackets}.
\end{remark}

 \begin{remark}\label{rmk: unit}
  To identify the unit $\lambda_j$ by which the bottom operation and the restriction on an odd class $x$ in degree $j$ differ when $p>2$, we expect that a chain-level understanding of the operations involved is necessary. A natural guess is that $\lambda_j$ is the sign by which the bottom Dyer-Lashof operation $\beta Q^{(j+1)/2}(x)$ on a degree $j$ class $x$ and its p-fold Massey-product differ. We hope to construct these operations on the explicit chain model of spectral partition Lie algebra obtained by Brantner, Campos, and Nuiten in \cite[Definition 4.43]{pd}.
 \end{remark}

\begin{remark}
Recently, Konovalov \cite{konovalov} computed the relations among unary operations on the odd primary homology groups of spectral Lie algebras by studying differentials in an algebraic Goodwillie spectral sequence and thus determined the entire structure of operations in the odd primary case.  The method in this paper suggests an alternative approach to settle the question. Namely, one could construct a  monad $\pazocal{C}$ that parametrizes divided power $\E^{\mathrm{nu}}_{\infty}$-$\Fp$-algebras, determine the structure of natural operations on the homotopy groups of algebras over $\pazocal{C}$, and feed it into the dual bar spectral sequence converging to $\pi_*(|\B(\id, \pazocal{C}, A)|^\vee)$ with $A$ any trivial algebra over $\pazocal{C}$ of finite type.
\end{remark}

As an immediate application, we obtain a computation of natural operations and relations up to the unit in \Cref{rmk: unit} on the mod $p$ TAQ cohomology $$\mathrm{TAQ}^*_{\Sp}(R;\Fp)=\mathrm{Map}_{\mathrm{Sp}}(|\B(\id, \E_\infty, R)|, \Sigma^*\Fp)$$ of $\E_\infty$-$\mathbb{S}$-algebras $R$, which is based on conversations with Tyler Lawson.

Since the functor $\mathrm{TAQ}_{\Sp}^i(-;\Fp)$ has representing object the trivial square-zero extension $\mathbb{S}\oplus \Sigma^{i}\Fp$ for all $i$, operations and relations are again parametrized by the mod $p$ TAQ cohomology on the trivial square-zero extensions $\mathbb{S}\oplus \Sigma^{i_1}\Fp\oplus\cdots\oplus \Sigma^{i_k}\Fp$. Using a base change formula to the reduced $\Fp$-linear TAQ cohomology, we deduce immediately from Theorem \ref{maintheorem1} and \ref{maintheorem2} the structure of natural operations on the  mod $p$ TAQ cohomology $\E_{\infty}$-$\mathbb{S}$-algebras. Note that the mod $p$ TAQ cohomology is in particular the mod $p$ cohomology of a spectrum, and hence is acted on by the mod $p$ Steenrod operations in the usual sense.
\begin{theorem} (Corollary \ref{slinear}, Proposition \ref{slinearrelations})
 For any tuple $(i_1,\ldots i_k)$ of integers, the $k$-ary cohomology operations $$\prod^k_{i=1}\mathrm{TAQ}^{i_l}_{\Sp}(-;\Fp)\rightarrow\mathrm{TAQ}^{m}_{\Sp}(-;\Fp).$$ are parametrized by the homological degree $-m$ part of $\free^{\Lie^{s,\rho}_{\pazocal{P}}}(\Sigma^{-i_1}\pazocal{A}\oplus\cdots\oplus \Sigma^{-i_k}\pazocal{A})$, where $\pazocal{A}$ is the Steenrod algebra graded homologically. All operations evaluate to zero on the unit except for scalar multiplication. The Steenrod operations commute with the bracket via the Cartan formula and the $\Fp$-linear TAQ cohomology operations via the Nishida relations on cohomology of the second extended power: 
 \begin{enumerate}
     \item For $p=2$ we have
$$Sq^a[x,y]=\sum_i[Sq^i(x),Sq^{a-i}(y)],$$
$$Sq^a R^{-|x|+1} (x) = \sum \binom{|x|-c}{a-2c} R^{a+|x|+1-c} Sq^c  (x) + \sum_{l<k, l+k=a} [Sq^l (x) , Sq^k( x) ],$$
 $$ Sq^a R^b (x) = \sum \binom{b-1-c}{a-2c} R^{a+b-c} Sq^c  (x),\,\,\,\, b>-|x|+1.$$
 \item For $p>2$ we have 
$$P^a[x,y]=\sum_i[P^i(x),P^{a-i}(y)],\,\,\,\,\beta P^a[x,y]=\sum_i([\beta P^i(x),P^{a-i}(y)]+[ P^i(x),\beta P^{a-i}(y)]).$$ 
 For any class $x$ and all $2j>-|x|+1$, the Nishida relations are
 \begin{align*}
     P^n \beta R^j=&(-1)^{n-i}\sum_i\binom{(j-i)(p-1)}{n-pi}\beta R^{n+j-i}P^i+(-1)^{n-i}\sum_i\binom{(j-i)(p-1)-1}{n-pi-1} R^{n+j-i}\beta P^i,
 \end{align*}
$$P^n R^j=(-1)^{n-i}\sum_i\binom{(j-i)(p-1)-1}{n-pi}R^{n+j-i}P^i,$$
 as well as
\begin{align*}
    P^n R^{j}(x)=&(-1)^{n-i}\sum_i\binom{(j-i)(p-1)-1}{n-pi}R^{n+j-i}P^i(x)\\
    &+\frac{1}{\lambda_{|x|}}\sum_{I, \sigma\in\Sigma_p, \sigma(1)=1}[[\cdots[[P^{i_{\sigma(1)}}(x),P^{i_{\sigma(2)}}(x)],P^{i_{\sigma(3)}}(x)]\cdots], P^{i_{\sigma(p)}}(x)]
\end{align*}
when the degree of $x$ is odd and $2j=-|x|+1$, where the bracket term sums over all nondecreasing sequences $I=(0\leq i_1\leq i_2\leq\ldots\leq i_p)$ with $i_1+i_2+\cdots+i_p=n$, and $\lambda_{|x|}$ is a fixed unit given in \Cref{maintheorem2}.(3).
 \end{enumerate}
\end{theorem}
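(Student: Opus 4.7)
The plan proceeds in three stages: first identify the module of $k$-ary operations via base change, then derive the Cartan formula from $\mathbb{S}$-linearity of the bracket, and finally establish the Nishida relations with extra care in the degenerate case.

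For the identification, since the functor $\mathrm{TAQ}^{i_l}(-,\Sp;H\Fp)$ is represented by the trivial square-zero extension $\Sp\oplus\Sigma^{i_l}H\Fp$, the $k$-ary operations landing in $\mathrm{TAQ}^m$ are parametrized by the reduced mod $p$ TAQ cohomology $\taq^m(\Sp\oplus\Sigma^{i_1}H\Fp\oplus\cdots\oplus\Sigma^{i_k}H\Fp,\Sp;H\Fp)$. I would apply the base change formula to rewrite this as the $\Fp$-linear TAQ cohomology of $H\Fp\oplus\bigoplus_l\Sigma^{i_l}(H\Fp\tens_{\Sp}H\Fp)$, whose augmentation ideal is a direct sum of shifts of the dual Steenrod algebra, which is degreewise of finite type. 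Since the $H\Fp$-linear dual of $H\Fp\tens_{\Sp}H\Fp$ has homotopy the Steenrod algebra $\pazocal{A}$ graded homologically, Theorem \ref{maintheorem1} (resp.\ Theorem \ref{maintheorem2}) identifies the operations with the homological degree $-m$ part of $\free^{\mathrm{sLie}^{\rho}_{\pazocal{P}}}(\Sigma^{-i_1}\pazocal{A}\oplus\cdots\oplus\Sigma^{-i_k}\pazocal{A})$. The vanishing of positive-weight operations on the unit $1\in\pazocal{A}_0$ reduces to the fact that positive Steenrod operations and the $R^i$ with $i>0$ kill the multiplicative unit in TAQ cohomology.

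For the Cartan formulas, the shifted Lie bracket is produced in Construction \ref{bracket} at the level of $\Sp$-linear spectra, so extending scalars to $H\Fp$ yields a bracket that commutes with the action of $\pazocal{A}=\pi_*\mathrm{End}_{H\Fp}(H\Fp\tens_{\Sp}H\Fp)$ in the standard way. Hence the Steenrod diagonal produces the displayed Cartan formulas both for $p=2$ and for $Sq^a$ replaced by $P^a$, $\beta P^a$ when $p>2$. For the Nishida relations, the commutation of Steenrod operations past $\Fp$-linear operations $R^i$ and $\beta R^i$ is encoded by the classical Nishida relations in the mod $p$ cohomology of the extended power $B\Sigma_p$ (\cite{nishidaodd}); the binomial expressions listed are obtained by Koszul-dualizing those relations from the Dyer-Lashof side to the $R^i$-side, matching the presentation of the ringoid $(\R')^!$ in \Cref{ringoid}.

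The main obstacle is the degenerate Nishida relation in the case that $x$ is in odd degree and $2j=-|x|+1$, where $\lambda_{|x|}R^{j}(x)$ coincides with the restriction $x^{[p]}$. Because the restriction is nonadditive, applying $P^n$ to the identity $\lambda_{|x|}R^{j}(x)=x^{[p]}$ must pick up the iterated-bracket deviation recorded by the $\sigma$-sum in Theorem \ref{maintheorem2}.(3), and one must verify that combining the additive Nishida relations and the Cartan formula with this deviation recovers precisely the double sum
\[
\frac{1}{\lambda_{|x|}}\sum_{I,\,\sigma\in\Sigma_p,\,\sigma(1)=1}[[\cdots[[P^{i_{\sigma(1)}}(x),P^{i_{\sigma(2)}}(x)],P^{i_{\sigma(3)}}(x)]\cdots],P^{i_{\sigma(p)}}(x)]
\]
appearing in the statement. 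A clean chain-level model for the restriction, of the sort hoped for in the remarks following Theorem \ref{maintheorem2}, would make this coefficient bookkeeping transparent; absent such a model, one argues indirectly by expanding $(x_1+\cdots+x_r)^{[p]}$ using the restriction formula of Theorem \ref{maintheorem2}.(3), applying $P^n$ termwise via the Cartan formula and the additive Nishida relations already established, and then collecting the bracket contributions to match the stated symmetric sum.
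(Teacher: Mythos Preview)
Your identification of the $k$-ary operations via representing objects and base change, and your treatment of the non-degenerate Nishida relations by dualizing the classical Dyer--Lashof/Steenrod relations, are essentially the paper's argument. One small inaccuracy: Construction~\ref{bracket} builds the bracket over $H\Fp$, not over $\Sp$; the Cartan formula for the bracket follows instead because the bracket is induced by a map of $H\Fp$-modules $\mathbb{S}^{-1}\tens X\tens Y\to A$ and Steenrod operations act via the coalgebra structure on $H\Fp\tens_{\Sp}H\Fp$, so they distribute over $\tens$ by the usual diagonal. This is cosmetic.

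The substantive divergence is in the degenerate case $2j=-|x|+1$. Your polarization scheme --- expand $(x_1+\cdots+x_r)^{[p]}$ via the Jacobson formula, apply $P^n$ termwise, and match bracket contributions --- is circular as written: to evaluate $P^n$ on the left-hand side $\lambda_{j}R^{(-j+1)/2}(x_1+\cdots+x_r)$ you need precisely the degenerate Nishida relation you are trying to establish, and on the right-hand side each $P^nR^{(-j+1)/2}(x_i)$ is equally unknown. At best polarization gives a consistency constraint; it does not by itself determine the bracket correction without an independent input fixing its form. The paper supplies that input differently: it invokes Fresse's explicit description of the restriction via the embedding $\mathrm{Lie}\hookrightarrow\mathrm{Assoc}$, namely the identity
\[
\sum_{\sigma\in\Sigma_p}X_{\sigma(1)}\cdots X_{\sigma(p)}=\sum_{\sigma\in\Sigma_p,\,\sigma(1)=1}\langle\langle\cdots\langle X_{\sigma(1)},X_{\sigma(2)}\rangle,\ldots\rangle,X_{\sigma(p)}\rangle,
\]
so that $x^{[p]}$ is the pullback of the associative $p$th power $x^p$. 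One then applies $P^n$ to $x^p$ using the ordinary multiplicative Cartan formula --- which is completely known --- and pulls the resulting symmetric sum back along the same identity to obtain exactly the iterated-bracket term $\sum_{I,\sigma}[[\cdots[P^{i_{\sigma(1)}}(x),P^{i_{\sigma(2)}}(x)],\ldots],P^{i_{\sigma(p)}}(x)]$. This is a direct computation rather than an indirect matching, and it is the missing idea in your treatment of the degenerate case.
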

\begin{remark}
  In parallel to the theory of spectral partition Lie algebras, Brantner and Mathew developed a derived version called \textit{partition Lie algebras}, which are algebras over a certain monad $\Lie^{\pi}_{\Fp,\Delta}$ on the derived category of chain complexes over $\Fp$. They showed that coconnective, finite type partition Lie algebras over $\Fp$ serve as the Koszul dual to complete local Noetherian simplicial commutative $\Fp$-algebras \cite[Theorem 1.11 and 1.13]{bm}. They then computed the ranks of homotopy groups of free partition Lie algebras in \cite[Theorem 1.16]{bm}.  While partition Lie algebras are more delicate than their spectral counterparts studied in this paper, we are hopeful that a similar strategy can be employed to obtain relations among the operations on the homotopy groups of partition Lie algebras. This will require as input the structure of natural operations on the homotopy groups of simplicial and cosimplicial commutative $\Fp$-algebras. 
  
  Note that for $p=2$, Goerss identified all operations and their relations on the homotopy groups of coconnective partition Lie algebras in \cite[Theorem H]{goerss} via a Quillen spectral sequence and a Hilton-Milnor type argument, with input the structure of unary operations on the homotopy groups simplicial commutative $\F$-algebras by the works of Cartan \cite{cartan}, Bousfield \cite{bousfield}, Dwyer \cite{dwyer}. For $p>2$, a basis of unary operations on the homotopy groups simplicial commutative $\Fp$-algebras was computed by Nakaoka in the simplicial case \cite{nak57,nak58}, and the quadratic relations among the operations were determined by Bousfield in \cite[Theorem 8.9]{bousfield}. In the cosimplicial case, Priddy determined the unary operations and their relations in \cite{priddy2} at all primes.
\end{remark}

\subsection{Conventions}
We denote by $\mathrm{Sp}$ the $\infty$-category of spectra.
We use $\Fp$ for both the field $\Fp$ and the Eilenbarg-MacLane spectrum representing $\Fp$.
The grading convention is homological unless for TAQ cohomology groups or otherwise stated.

A weighted graded $\Fp$-module (resp. $\Fp$-module spectrum) $M$ is an $\N$-indexed collection of $\mathbb{Z}$-graded $\Fp$-modules (resp. $\Fp$-module spectra) $\{M(w)\}_{w\in\N}$. The weight grading of an element $x\in M(w)$ is $w$. Morphisms are weight preserving morphisms of graded $\Fp$-modules. 
We assume that every object is graded and weighted whenever it makes sense.  For instance, $\Mod_{\Fp}$  stands for the ordinary category of weighted graded $\Fp$-modules,  and $\Mod_{\Fp}(\mathrm{Sp})$ the $\infty$-category of weighted graded module-spectra over $\Fp$ (which is equivalent to the derived category of chain complexes over $\Fp$ equipped with a weight grading). If the context is clear, we will call objects in either category $\Fp$-modules. The Day convolution $\tens$ makes $\Mod_{\Fp}$ (resp. $\Mod_{\Fp}(\mathrm{Sp})$) a symmetric monoidal category. The Koszul sign rule $x\tens y=(-1)^{|x||y|}y\tens x$ for the symmetric monoidal product $\tens$ depends only on the internal grading and not the weight grading.

Similarly, a shifted Lie algebra $L$ over $\Fp$ is a weighted graded $\Fp$-module $L\in\Mod_{\Fp}$ equipped with a shifted Lie bracket $[- ,-]:L_m\tens L_n\rightarrow L_{m+n-1}$ that adds weights, as well as satisfying graded commutativity $[x,y]=(-1)^{|x||y|}[y,x]$ and the graded Jacobi identity $$(-1)^{|x||z|}[x,[y,z]]+(-1)^{|y||x|}[y,[z,x]]+(-1)^{|z||y|}[z,[x,y]]=0.$$ If $p=2$, then we further require that $[x,x]=0$ for all $x$. If $p=3$, then we further require that $[x,[x,x]]=0$ for all $x$. Note that in general an unweighted Lie algebra over $\Fp$ does not admit a weight decomposition. 
We use $\pi_n(-)$ to denote the functor taking the $n$th homotopy group of a spectrum or a simplicial $\Fp$-module, as well as the functor taking the $n$th homology group of a chain complex over $\Fp$.

We use $\E_\infty$ and $\E^{\mathrm{nu}}_\infty$ to denote respectively the unital and non-unital commutative operad in $\mathrm{Sp}$, and $\E^{\mathrm{nu}}_\infty\tens \Fp$ the nonunital commutative operad in $\Mod_{\Fp}(\mathrm{Sp})$. Often we abuse notations and denote by $O$ the monad associated to the free $O$-algebra functor when $O=\E_\infty,\E^{\mathrm{nu}}_\infty,\E^{\mathrm{nu}}_\infty\tens \Fp$.

\

\noindent\textbf{Acknowledgements.} The author would like to thank Lukas Brantner for suggesting the project and sharing his insights,  Jeremy Hahn, Tyler Lawson, and Haynes Miller for many discussions and encouragements, Robert Burklund and Martin Frankland for helpful conversations, as well as Basterra and Mandell for their generosity. Special thanks to Lukas Brantner and Tyler Lawson for pointing out several issues in earlier drafts, to the referee for their detailed suggestions, and to the editor for their great patience. During the revision of this paper, the author was supported by the Danish National Research Foundation through the Copenhagen Centre for Geometry and Topology (DNRF151) and the European Union's Horizon 2020 research and innovation programme
under the Marie Sklodowska-Curie Grant Agreement No.101150469.

\section{Preliminaries}\label{section2}

\subsection{Spectral Lie operad}

We begin with a brief review of the spectral Lie operad $\partial_*(\mathrm{Id})$. 
Ching \cite{ching} and Salvatore \cite{salvatore} showed that the Goowillie derivatives $\{\partial_n(\mathrm{Id})\}$ of the identity functor $\mathrm{Id}:\mathrm{Top}_*\rightarrow\mathrm{Top}_*$  form an operad $\partial_*(\mathrm{Id})$ in spectra. This operad is Koszul dual to the nonunital commutative operad $\E^{\mathrm{nu}}_\infty$ via the operadic bar construction  $$\partial_*(\mathrm{Id})\simeq \mathbb{D}\mathrm{Bar}(1,\E_\infty^{\mathrm{nu}}, 1).$$
For a description of the operadic bar construction, see \cite{ching} for a topological model using trees and \cite[Appendix D]{brantner} for an $\infty$-categorical construction along with a comparison with the topological model.

The $n$th-derivative $\partial_n(\mathrm{Id})$ admits an explicit description due to  Arone and Mahowald \cite{am}, following the work of Johnson \cite{johnson}. Let $P_n$ be the poset of partitions of the set $\underline{n}=\{1,2,\ldots,n\}$ ordered by refinements, equipped with a $\Sigma_n$-action induced from that on $\underline{n}$. Denote by $\hat{0}$ the discrete partition and $\hat{1}$ the partition $\{\underline{n}\}$. Set $\Pi_n=P_n-\{\hat{0},\hat{1}\}$. Regarding a poset $P$ as a category, we obtain via the nerve construction a simplicial set $N_\bullet(P)$. The \textit{partition complex} $\Sigma|\Pi_n|^{\diamond}$, the reduced-unreduced suspension of the geometric realization of  $N_\bullet(\Pi_n)$,  is modeled by the simplicial set
$$N_\bullet(P_n)/(N_\bullet(P_n-\hat{0})\cup N_\bullet(P_n-\hat{1}))$$ for $n\geq 2$ and the simplicial 0-circle $S^0$ for $n=1$.
Then there is an equivalence $$\partial_n(\mathrm{Id})\simeq\mathbb{D}(\Sigma|\Pi_n|^{\diamond})$$  of spectra with $\Sigma_n$-action, where $\mathbb{D}$ denotes the Spanier-Whitehead dual of a spectrum.

\subsection{Spectral partition Lie algebras}
Motivated by the theory of classical operadic Koszul duality \cite{gk}, the natural next step is to formulate a Koszul duality theorem between suitable categories of algebras over the Koszul dual pair $\E^{\mathrm{nu}}_\infty$ and $\partial_*(\mathrm{Id})$. Partial progress was achieved by Ching and Harper in \cite{chingharper}, following a general conjecture by Francis and Gaitsgory \cite{fg}. Recent work of Brantner and Mathew \cite{bm} on spectral partition Lie algebras completely resolved the question over $\Fp$, and we will give a very brief summary of their results.

\

Let $\Mod^{\mathrm{ft}}_{\Fp}\subset \Mod_{\Fp}(\mathrm{Sp})$ be the subcategory spanned by $\Fp$-modules of \textit{finite type}, i.e. $\Fp$-modules with degree-wise finite-dimensional homotopy groups. Denote by $\Mod^{\mathrm{ft}}_{\Fp,\leq 0}\subset \Mod^{\mathrm{ft}}_{\Fp}$ the subcategory spanned by coconnective objects.  Let $\E^{\mathrm{nu}}_\infty\tens \Fp$ be the nonunital commutative operad in $\Mod_{\Fp}(\mathrm{Sp})$. There is an adjunction 
\begin{center}
\begin{tikzpicture}[node distance=3.8cm, auto]
\pgfmathsetmacro{\shift}{0.3ex}
\node (P) {$\mathrm{Alg}_{\E^{\mathrm{nu}}_\infty\tens \Fp}(\Mod_{\Fp}(\mathrm{Sp}))$};
\node(Q)[right of=P] {$\Mod_{\Fp}(\mathrm{Sp})$\ ,};

\draw[transform canvas={yshift=0.5ex},->] (P) --(Q) node[above,midway] {\footnotesize $\mathrm{cot}$};
\draw[transform canvas={yshift=-0.5ex},->](Q) -- (P) node[below,midway] {\footnotesize $\mathrm{sqz}$}; 
\end{tikzpicture}
\end{center}
where the functor sqz sends an object $M$ to the trivial $\E^{\mathrm{nu}}_\infty\tens \Fp$-algebra $M$. The restriction of this adjunction to the subcategory $\Mod^{\mathrm{ft}}_{\Fp,\leq 0}$ defines a sifted-colimit-preserving monad $(M\mapsto \mathrm{cot}(\mathrm{sqz}(M)
 ^{\vee})^{\vee})$ on $\Mod^{\mathrm{ft}}_{\Fp,\leq 0}$.
\begin{definition}\cite[Definition 5.32]{bm}\label{def: spla}
 The \textit{spectral partition Lie monad} $\spla$ is the unique sifted-colimit-preserving monad $$\spla:\Mod_{\Fp}(\mathrm{Sp})\rightarrow\Mod_{\Fp}(\mathrm{Sp})$$ extending the monad $(M\mapsto \mathrm{cot}(\mathrm{sqz}(M)
 ^{\vee})^{\vee})$ on $ \Mod^{\mathrm{ft}}_{\Fp,\leq 0}$, 
\end{definition}
Algebras over the monad $\spla$ are called \textit{spectral partition Lie algebras}. The free spectral partition Lie algebras on bounded above objects admit an explicit description.

\begin{proposition}\cite[Proposition 5.35]{bm}\label{bm535}
For $V\in\Mod_{\Fp}(\mathrm{Sp})$ bounded above, we have
$$\spla(V)\simeq|\B(\id, \E^{\mathrm{nu}}_\infty\tens \Fp, V^\vee)|^\vee\simeq \bigoplus_{n\geq 1}\big((\partial_n(\id)\tens \Fp)\tens (V)^{\tens n}\big)^{h\Sigma_n}.$$
\end{proposition}
The above formula makes it clear that spectral partition Lie algebras are \textit{not} algebras over the spectral Lie operad, as the structural map of an algebra $L$ over the spectral Lie operad in $\Mod_{\Fp}(\mathrm{Sp})$ is given by 
$$\free^{\partial_*(\mathrm{Id})\tens \Fp}(L)\simeq \bigoplus_{n\geq 1}\big((\partial_n(\id)\tens \Fp)\tens (L)^{\tens n}\big)_{h\Sigma_n}\rightarrow L.$$ Heuristically, spectral partition Lie algebras are the dual of divided power coalgebras over the cooperad $\mathrm{Bar}(1, \E^{\mathrm{nu}}_\infty\tens \Fp, 1)$, and hence candidates for the Koszul dual of $\E^{\mathrm{nu}}_\infty$-$\Fp$-algebras. To formulate the precise Koszul duality statement, we need to introduce one more  condition.

\begin{definition}
An $\E_\infty$-$\Fp$-algebra $A$ is \textit{complete local Noetherian} if

(1). $\pi_0(A)$ is a complete local Noetherian ring; 

(2). $A$ is connective and $\pi_n(A)$ is a finitely-generated module over $\pi_0(A)$ for all $n\geq 0$.
\end{definition}

Now we can state a special case of the main results by Brantner and Mathew.
\begin{theorem}\cite[Theorem 1.19]{bm}\label{kosuzlduality}
 There is an equivalence of $\infty$-categories between complete local Noetherian $\E_\infty$-$\Fp$-algebras and the $\infty$-category of coconnective spectral partition Lie algebras of finite type.
\end{theorem}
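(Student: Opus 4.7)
The plan is to realize the equivalence as the restriction of a Koszul-duality adjunction between augmented $\E_\infty$-$H\Fp$-algebras and spectral partition Lie algebras, and then propagate the comparison upward from abelian objects by Postnikov-tower induction. Concretely, I would produce an adjoint pair $(\mathfrak{L}, \mathfrak{C})$ in which the left adjoint $\mathfrak{L}$ is essentially the dualized cotangent complex, extending the monad $M \mapsto \mathrm{cot}(\mathrm{sqz}(M)^\vee)^\vee$ that defines $\mathrm{Lie}^{\pi}_{\Fp,\E_{\infty}}$ on $\Mod^{\mathrm{ft}}_{H\Fp,\leq 0}$, and the right adjoint $\mathfrak{C}$ is a Chevalley--Eilenberg-type cochain construction. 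On free objects \Cref{bm535} identifies $\mathfrak{L}$ with the spectral partition Lie functor; restricting to the subcategory of $H\Fp$-augmented complete local Noetherian algebras on one side and coconnective finite-type partition Lie algebras on the other yields the candidate equivalence.

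The comparison on abelian objects furnishes the base case. For a trivial square-zero extension $H\Fp \oplus V$ with $V \in \Mod^{\mathrm{ft}}_{H\Fp,\leq 0}$, the cotangent complex is $V$, and its dual is an abelian spectral partition Lie algebra on $V^\vee$; conversely $\mathfrak{C}$ applied to an abelian object of finite type recovers the trivial square-zero extension on its $\Fp$-dual. Both composites are then naturally equivalent to the identity on abelian objects by direct inspection, recovering classical Koszul duality after the appropriate finite-type dualization.

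For the inductive step I would use Postnikov towers on both sides. Any complete local Noetherian $\E_\infty$-$H\Fp$-algebra $A$ augmented over $H\Fp$ admits a tower $A \simeq \lim_n A_n$ in which each $A_n \to A_{n-1}$ is a square-zero extension by a bounded-type module, classified by a class in $\taq^{*}(A_{n-1})$. Dually, a coconnective finite-type partition Lie algebra $\mathfrak{g}$ is built as $\lim_n \tau_{\geq -n}\mathfrak{g}$, each stage an abelian extension classified by a class in the cohomology of $\tau_{\geq -n+1}\mathfrak{g}$. The identification of \Cref{bm535} matches these two obstruction groups under the adjunction, so assuming the equivalence at stage $n-1$, a comparison of the defining pullback squares produces the equivalence at stage $n$.

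The main obstacle is the convergence of this Postnikov induction: one must check that $\mathfrak{L}$ and $\mathfrak{C}$ commute with the relevant limits and that the unit and counit remain equivalences after taking $\lim_n$, with no spurious $\lim^1$ contributions. The Noetherian hypothesis on $\pi_0(A)$ is exactly what guarantees that each Postnikov stage is controlled by a finitely generated module, which under dualization becomes the finite-type condition on the Lie side; this finiteness is what ultimately makes the two obstruction theories match term by term and forces convergence of the comparison. A secondary subtlety is functoriality in the augmentation, which again reduces stage by stage to \Cref{bm535}.
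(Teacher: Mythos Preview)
This theorem is not proved in the present paper. It is stated as \cite[Theorem 1.19]{bm} and invoked as background; the paper neither gives nor sketches a proof, so there is no argument here against which to compare your proposal.

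That said, a brief remark on the proposal itself: your outline of a Koszul-duality adjunction plus Postnikov-tower induction is a reasonable caricature of how results of this type are established, but as written it is only a strategy, not a proof. Several of the steps you describe as routine are in fact the substance of \cite{bm}. In particular: constructing the adjunction $(\mathfrak{L},\mathfrak{C})$ at the level of $\infty$-categories (not merely on homotopy groups of free objects), showing that the relevant deformation-theoretic obstruction groups on the two sides genuinely coincide under the adjunction (this is not an immediate consequence of \Cref{bm535}, which only identifies the underlying module of the free algebra on bounded-above inputs), and proving convergence of the unit and counit all require serious work. Your appeal to ``no spurious $\lim^1$ contributions'' and ``finiteness forces convergence'' hides exactly the completeness and nilpotence arguments that \cite{bm} develops. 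If you intend to supply an independent proof, each of these points would need to be filled in; if you intend only to cite the result, no proof is needed here.
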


\subsection{Relation to TAQ cohomology}
Spectral partition Lie algebras are closely related to the $\Fp$-linear TAQ spectrum. Inspired by the unpublished work of Kriz \cite{kriz}, Basterra constructed the \textit{topological Andr\'{e}-Quillen homology object}
 $\mathrm{TAQ}_A(R;B)$  for a fixed map of $\E_\infty$-algebras $A\rightarrow B$ and any object $R$ in the category of $\E_\infty$-algebras between $A$ and $B$ \cite{taq}.
  For any object $R$ in the category of $\E_\infty$-$\mathbb{S}$-algebras with a map to $\Fp$, we obtain the mod $p$ TAQ spectrum $$\mathrm{TAQ}_{\mathbb{S}}(R; \Fp)\simeq |\B(\id, \free_{\E_\infty}, R)|\tens \Fp,$$ where $\free_{\E_\infty}$ denotes the monad associated with the free $\E_\infty$-$\mathbb{S}$-algebra, cf. \cite[section 5]{taq} and \cite[Proposition 1.8.9]{lawson}. The $n$th \textit{mod p TAQ cohomology} is defined to be $$\mathrm{TAQ}^n_{\mathbb{S}}(R; \Fp)=[\Sigma^{-n} \mathrm{TAQ}^{\mathbb{S}}(R; \Fp), \Fp]_{\Mod_{\Fp}(\Sp)}$$ for $R$ any $\E_{\infty}$-$\mathbb{S}$-algebra.

  We can also consider the $\Fp$-linear TAQ version. 

\begin{definition}\label{def: FpTAQ}
   For $R$ an $\E_{\infty}$-$\Fp$-algebra, its $\Fp$-linear TAQ spectrum
$$\mathrm{TAQ}_{\Fp}(R;\Fp)\simeq |\B(\id, \free_{\E_\infty\tens \Fp}, R)|.$$ The $n$th $\Fp$-\textit{linear TAQ cohomology} is defined to be $$\mathrm{TAQ}_{\Fp}^n(R; \Fp)=[\Sigma^{-n} \mathrm{TAQ}_{\Fp}(R; \Fp), \Fp]_{\Mod_{\Fp}(\mathrm{Sp})}.$$
\end{definition}
Since we will be concerned solely with mod $p$ coefficients, we will suppress the coefficient $\Fp$ in the notations of TAQ spectra and their cohomology.
There is a base-change formula
$$\mathrm{TAQ}_{\mathbb{S}}(R)\simeq \mathrm{TAQ}_{\Fp}(R\tens_{\mathbb{S}} \Fp)$$ for $R$ any $\E_{\infty}$-$\mathbb{S}$-algebra.  

In this paper we work with the nonunital $\Fp$-linear version $$\taq(A)
:=|\B(\id, \E^{\mathrm{nu}}_\infty\tens \Fp, A)|,$$ where $\E^{\mathrm{nu}}_\infty\tens \Fp$ is the nonunital $\E_\infty$-operad in $\Mod_{\Fp}$ and $A$ a $\E^{\mathrm{nu}}_\infty\tens \Fp$-algebra. We call this the \textit{reduced mod} $p$ \textit{TAQ spectrum} of $A$, since $$\taq(A)\oplus \Fp\simeq \mathrm{TAQ}_{\Fp}(\Fp\oplus A).$$ 
Thus the reduced mod $p$ TAQ cohomology group $\taq^n(A):=[\Sigma^{-n} \taq(A), \Fp]_{\Mod_{\Fp}(\Sp)}$ differ from the $\Fp$-linear TAQ cohomology group $\mathrm{TAQ}^n_{\Fp}(A\oplus \Fp)$ only when $n=0$ by a copy of $\Fp$. 
By Proposition \ref{bm535},  when $A$ is a bounded above  $\Fp$-module of finite type considered as a trivial $\E^{\mathrm{nu}}_\infty\tens \Fp$-algebra, there is an equivalence  $$\taq^n(A^\vee)\cong \pi_{-n}(|\B(\id, \E^{\mathrm{nu}}_\infty\tens \Fp, A^\vee)|^{\vee})\cong \pi_{-n}( \spla(A)).$$
From here on, we will often omit the terms $\Fp$-linear and mod $p$ when there is no ambiguity regarding which version of TAQ cohomology is concerned.

\subsection{Operations}
The goal of this paper is to understand natural operations and their relations on the homotopy groups of spectral partition Lie algebras and mod $p$ TAQ cohomology. First we record a few general remarks about operations on algebras over a monad, adapted from Lawson's excellent survey \cite[section 1.4]{lawson} and \cite[section 3]{rezk} on the theory of operations for algebras over operads. 

Given a monad $\mathbf{T}$ on the $\infty$-category $\Mod_{\Fp}(\mathrm{Sp})$, we define an \textit{operation} on $\mathbf{T}$-algebras to be a natural transformation $\pi_m(-)\rightarrow\pi_n(-)$ of functors $\mathrm{Alg}_{\mathbf{T}}\rightarrow\mathrm{Sets}$ for some $m,n$. Here $\mathrm{Alg}_{\mathbf{T}}=\mathrm{Alg}_{\mathbf{T}}(\Mod_{\Fp}(\mathrm{Sp}))$ is the $\infty$-category of $\mathbf{T}$-algebras over $\Mod_{\Fp}(\mathrm{Sp})$. Let $\mathrm{Op}(m;n)$ be the set of operations for fixed $m,n$.
It follows from the universal property of free algebras that for any $\mathbf{T}$-algebra $A$, $$\pi_m(A)\cong\mathrm{Map}_{\mathrm{Alg}_{\mathbf{T}}}(\free^{\mathbf{T}}(\Sigma^m \Fp), A).$$ Hence  $\free^{\mathbf{T}}(\Sigma^m \Fp)$ is the representing object for the functor $\pi_m(-)$ on $\mathrm{Alg}_{\mathbf{T}}$. 

By the Yoneda Lemma, the set of operations $\mathrm{Op}(m;n)$, or equivalently natural transformations $\pi_m(-)\rightarrow\pi_n(-)$ in $\mathrm{Alg}_{\mathbf{T}}$, is isomorphic to $\pi_n(\free^{\mathbf{T}}(\Sigma^m \Fp))$. Explicitly, given an operation $\alpha\in\pi_n(\free^{\mathbf{T}}(\Sigma^m \Fp))$ and a class $x\in\pi_m(A)$ with $A$ a $\mathbf{T}$-algebra, we obtain a class $\alpha(x)$ in $\pi_n(A)$ via the pullback
$$\pi_m(A)\cong\mathrm{Map}_{\mathrm{Alg}_{\mathbf{T}}}(\free^{\mathbf{T}}(\Sigma^m \Fp), A)\xrightarrow{\alpha^*}\mathrm{Map}_{\mathrm{Alg}_{\mathbf{T}}}(\free^{\mathbf{T}}(\Sigma^n \Fp), A)\cong \pi_n(A).$$

Therefore, to understand the unary operations on $\mathbf{T}$-algebras and their relations, we need to first compute $\pi_*(\free^\mathbf{T}(\Sigma^m \Fp))$ as an algebra for all $m$. Then we need to understand the composition product on unary operations $$\pi_n(\free^\mathbf{T}(\Sigma^m \Fp))\times\pi_m(\free^\mathbf{T}(\Sigma^l \Fp))\rightarrow\pi_n(\free^\mathbf{T}(\Sigma^l \Fp))$$ for all $l,m,n$, which corresponds to composing two natural transformations $\pi_l(-)\rightarrow\pi_m(-)$ and $\pi_m(-)\rightarrow\pi_n(-)$ of functors on $h\mathrm{Alg}_{\mathbf{T}}$. In general, natural $k$-ary operations $\prod_{l=1}^k \pi_{i_l}(-)\rightarrow\pi_n(-)$ are parametrized by the homotopy groups $$\pi_n(\free^\mathbf{T}(\Sigma^{i_1} \Fp\oplus\cdots\oplus \Sigma^{i_k} \Fp))$$ for all $k$-tuples $(i_1,\ldots, i_k)$.
\begin{remark}\label{rmk: alg approx}
    Since taking homotopy groups yields an equivalence between the homotopy category associated with $\Mod_{\Fp}(\mathrm{Sp})$ and $\Mod_{\Fp}$, the monad $\mathbf{T}$ on $\Mod_{\Fp}(\mathrm{Sp})$ descends to a monad $\pazocal{T}$ on $\Mod_{\Fp}$, called its \textit{algebraic approximation}. Therefore the canonical map $\pazocal{T}(\pi_*(X))\rightarrow \pi_*(\mathbf{T}(X))$ is a natural isomorphism for any $X\in \Mod_{\Fp}(\mathrm{Sp})$, so $\pazocal{T}$ paramtrizes natural operations on the homotopy groups of $\mathbf{T}$-algebras.
\end{remark}

Here we specialize to $\mathbf{T}=\spla$. The decomposition of the free algebra over $\spla$ into homogeneous pieces in Proposition \ref{bm535} allows us to impose a weight grading on the operations on the homotopy groups of $\spla$-algebras in the usual sense.

\

On the other hand, the mod $p$ TAQ cohomology functor $\mathrm{TAQ}_{\Fp}(-)$ on $\mathrm{Alg}_{\E_\infty}(\Mod_{\Fp}(\mathrm{Sp}))$ has as representing objects the square-zero extensions $\mathrm{sqz}(M)\simeq\Fp\oplus M$ for $M\in \Mod_{\Fp}(\mathrm{Sp})$ \cite[section 1.8]{lawson}.
Therefore, for any $m$ and tuple $(i_1,\ldots, i_k)$, the group of cohomology operations $$\prod^k_{i=1}\mathrm{TAQ}^{i_l}_{\Fp}(-)\rightarrow\mathrm{TAQ}^m_{\Fp}(-)$$  is the given by $\mathrm{TAQ}^{m}_{\Fp}(\Fp\oplus\Sigma^{i_1}\Fp\oplus\cdots\Sigma^{i_k}\Fp).$

Note that all operations vanish on the unit except for scalar multiplication. Since $$\taq^n(A^\vee)\cong \pi_{-n}(|\B(\id, \E^{\mathrm{nu}}_\infty\tens \Fp, A^\vee)|^{\vee})\cong \pi_{-n}( \spla(A))$$ for all $n$ when $A$ is a bounded above $\Fp$-module of finite type considered as a trivial $\E^{\mathrm{nu}}_\infty\tens \Fp$-algebra, natural operations and their relations on the reduced mod $p$ TAQ cohomology, or equivalently, the mod $p$ TAQ cohomology away from the unit, agree with those on the homotopy group of spectral partition Lie algebras up to a change of grading conventions.   

Brantner and Mathew obtained bases of the homotopy groups of free spectral partition Lie algebras on $\Sigma^j \Fp$ via an isotropy spectral sequence as in \cite[Example 1.3]{ADL}. Then they propagated the result to any direct sum of shifts of $\Fp$ using the Takayasu cofibration sequence (\cite{takayasu}, cf. \cite{kuhncofib,arone}) and a Hilton-Milnor-type decomposition of the partition complex and an EHP sequence developed in \cite{young}.

\begin{definition}\label{def: Lyndon}
 We say a word $w$ in letters $\{x_1, \ldots, x_k\}$ is a \textit{Lyndon word} if it is smaller than
any of its cyclic rotations in the lexicographic order with $x_1<\cdots < x_k$. Write $B(n_1,\ldots, n_k)$ for
the set of Lyndon words in which the letter $x_i$ appears precisely $n_i$ times, and define the degree of $w\in B(n_1,\ldots, n_k)$ to be $\mathrm{deg}(w):=
\sum_i(l_i - 1)n_i + 1$.
\end{definition}
 Note that the collection of all Lyndon words in letters $x_1, \ldots, x_k$ produces a basis for the free \textit{totally-isotropic} Lie algebra over $\Fp$ on $k$ generators, where totally-isotropic means that self-brackets are also zero.
\begin{notation}\label{notn: L_p approx}
    Denote by $\pazocal{L}_p$ the monad on $\Mod_{\Fp}$ that serves as the  algebraic approximation of the monad $\spla$ in the sense of \Cref{rmk: alg approx}, , which has both an internal grading and a weight grading $[-]$ induced by the weight decomposition of $\spla$.
\end{notation}
 
\begin{theorem} \cite[Theorem 1.22]{bm}\label{dimension}
 The $\Fp$-vector space $$\pi_*(\spla(\Sigma^{l_1}\Fp\oplus\cdots\oplus \Sigma^{l_k}\Fp))\cong \pazocal{L}_p(\Sigma^{l_1}\Fp\oplus\cdots\oplus \Sigma^{l_k}\Fp))$$ has a basis indexed by
sequences $(i_1,\ldots, i_k, e,w)$. Here $w \in B(n_1, \ldots , n_k)$ is a Lyndon word. We have $e \in \{0, \iota\}$, where
$\iota= 1$ if $p$ is odd and $\mathrm{deg}(w)$ is even. Otherwise, $\iota=0$.
The integers $i_1,\ldots, i_k$ satisfy:
\begin{enumerate}
    \item Each $i_j$ is congruent to 0 or 1 modulo $2(p-1)$.
    \item For all $1 \leq j < k$, we have $i_j < pi_{j+1}$.
    \item We have $i_k \leq (p- 1)(1+ e) \mathrm{deg}(w)- \iota$.
\end{enumerate}
The homological degree of $(i_1,\ldots, i_k, e,w)$ is $((1 + e) \mathrm{deg}(w)- e) + i_1 + \ldots + i_k - k$.
\end{theorem}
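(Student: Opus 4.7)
The plan is to use Proposition \ref{bm535} to reduce the problem to a computation on the partition complex, then propagate from one generator to many via an equivariant splitting. Since $\mathrm{Lie}^{\pi}_{\Fp, \E_{\infty}}(V) \simeq \bigoplus_{n\geq 1}\big((\partial_n(\id)\tens H\Fp)\tens V^{\tens n}\big)^{h\Sigma_n}$ and $\partial_n(\id) \simeq \mathbb{D}(\Sigma|\Pi_n|^{\diamond})$, the computation boils down to understanding the $\Sigma_n$-equivariant $\Fp$-cohomology of the partition complex twisted by shifted sign representations, and tracking which homotopy classes survive homotopy fixed points.

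First I would treat the single-generator case $V = \Sigma^l H\Fp$ using the isotropy spectral sequence of Arone--Dwyer--Lesh, which filters $(X)^{h\Sigma_n}$ by the poset of conjugacy classes of isotropy subgroups. The key geometric input is that the fixed-point complex $|\Pi_n|^G$ is $\Sigma_n$-equivariantly contractible unless $G$ is (conjugate to) a transitive elementary abelian $p$-subgroup, in which case $n = p^k$ and the fixed-point sphere contributes via its Borel cohomology a class whose behavior is governed by the Dyer--Lashof algebra on $l$. This pinpoints exactly where the constraints come from: iterated extended powers of $\mathbb{F}_p^k$-subgroups produce length-$k$ Dyer--Lashof monomials $Q^{i_1}\cdots Q^{i_k}$, the mod $2(p-1)$ congruence records the instability that these operations must obey, the condition $i_j<pi_{j+1}$ is admissibility, and the inequality $i_k\leq (p-1)(1+e)l - \iota$ is the usual excess bound in the appropriate top/bottom regime (the parameter $e$ distinguishes even versus odd-degree classes at odd primes, encoding the extra $\iota$ contribution).

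To reach multiple generators, I would combine the single-generator computation with an equivariant Hilton--Milnor-type decomposition of the partition complex into summands indexed by Lyndon words, as constructed by Arone--Brantner in \cite{young}. Each Lyndon word $w \in B(n_1,\ldots,n_k)$ with total length $n = \sum n_i$ contributes a factor on which the $\Sigma_n$-action factors through a Young subgroup $\prod \Sigma_{n_i}$, so an EHP-style sequence reduces the problem to an assembly of single-generator answers on basic monomials $w$ of degree $\mathrm{deg}(w) = \sum(l_i-1)n_i + 1$. The Lyndon basis is natural here because it selects a free basis for the totally-isotropic part of the free Lie algebra on $k$ generators, and the outer admissible Dyer--Lashof monomial $Q^{i_1}\cdots Q^{i_k}$ is then stacked on top of each $w$ with excess measured against $\mathrm{deg}(w)$.

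The main obstacle is twofold: first, one must establish the Hilton--Milnor-type decomposition of the partition complex equivariantly with enough precision that the EHP sequence splits after taking homotopy fixed points, avoiding spurious differentials between basis elements attached to different Lyndon words; second, within a fixed Lyndon summand, one must verify collapse of the isotropy spectral sequence and rule out additive extensions. Both follow once one observes that the homological bidegree $((1+e)\mathrm{deg}(w) - e) + i_1 + \cdots + i_k - k$ places distinct generators in distinct total degrees whenever the admissibility and excess conditions are satisfied, so dimension counting against the known Euler characteristic forces collapse and rigidifies the basis.
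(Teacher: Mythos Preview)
This theorem is not proved in the paper at all: it is imported verbatim from Brantner--Mathew \cite[Theorem 1.20]{bm}, and the surrounding text only summarizes their method (isotropy spectral sequence for a single generator, then the Arone--Brantner decomposition of the partition complex together with an EHP sequence for several generators). Your outline correctly reproduces that summary, so at the level of strategy you are aligned with the source the paper is citing.

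However, your final paragraph contains a genuine gap. The claim that ``the homological bidegree $((1+e)\mathrm{deg}(w)-e)+i_1+\cdots+i_k-k$ places distinct generators in distinct total degrees'' is false: many admissible monomials on different Lyndon words land in the same total degree (this is already visible for two generators in low weights), so a degree-separation argument cannot rule out differentials or extensions. Likewise, ``dimension counting against the known Euler characteristic'' is circular here --- the Euler characteristic you would need is exactly the rank you are trying to establish. The actual collapse of the isotropy spectral sequence in \cite{ADL,bm} is not a sparsity argument but uses the identification of the contributing strata with iterated extended powers and the structure of the Dyer--Lashof algebra; and the passage to multiple generators requires that the Arone--Brantner splitting of \cite{young} is genuinely equivariant and stable, not merely a splitting after passing to homotopy groups. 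Without those inputs your sketch does not close, and you should either invoke them explicitly or supply an independent argument for collapse.
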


In other words, Brantner and Mathew identified the underlying functor of $\pazocal{L}_p$, but not the monadic structure. In the rest of the paper, we use a dual bar spectral sequence to compute the relations among the unary operations. By comparing with the bases in the above  theorem, we show that the spectral sequences of interest degenerate and deduce the composition product among unary operations. Then we use the homotopy fixed points spectral sequence to deduce a shifted Lie algebra structure with restrictions on the homotopy group of spectral partition Lie algebras.  Finally we show that these are all the natural operations and construct the target category that captures all algebraic structures on the homotopy group of spectral partition Lie algebras.
\subsection{Shifted restricted Lie algebras}
We end this section by recalling the definition of shifted Lie algebras with a restriction map.

\begin{definition} (cf. \cite{jacobson}, \cite{fresse} for the unshifted version.)
A shifted \textit{restricted} Lie algebra over $\F$, denoted as a $\Lie^{s,\rho}_{\F}$-algebra, is a graded $\F$-module $L=L_\bullet$ with a shifted Lie bracket $L_m\tens L_n\rightarrow L_{m+n-1}$ and a restriction map $x\mapsto x^{\{2\}}$ with $x^{\{2\}}\in L_{2|x|-1}$ for all $x\in L$ satisfying the following identities:
 \begin{enumerate}
     \item $\mathrm{ad}(x^{\{2\}})=\mathrm{ad}(x)$ for all  $x\in L$, so in particular $[y, x^{\{2\}}]=[y,[x,x]]$ for all $x,y\in L$;
     \item For all $x,y\in L$, $(x+y)^{\{2\}}=x^{\{2\}}+y^{\{2\}}+[x,y]$, where $s_i$ is the coefficient of $t^{i-1}$ in the formal expression $\mathrm {ad} (tx+y)^{p-1}(x)$.
\end{enumerate}
Suppose that $p>2$. A shifted \textit{restricted} Lie algebra over $\Fp$, denoted as a $\Lie^{s,\rho}_{\Fp}$-algebra, is a graded $\Fp$-module $L=L_\bullet$ with a shifted Lie bracket $L_m\tens L_n\rightarrow L_{m+n-1}$ and a restriction map $x\mapsto x^{\{p\}}$ with $x^{\{p\}}\in L_{p|x|-p+1}$ whenever $|x|$ is odd, satisfying the following identities:
 \begin{enumerate}
    \item $(c x)^{\{p\}}=c^p x^{\{p\}}$ for all odd degree $x\in L$ and $c \in \Fp$;
     \item $\mathrm{ad}(x^{\{p\}})=\mathrm{ad}(x)$ for all odd degree $x\in L$;
     \item For all odd degree $x,y\in L$, $(x+y)^{\{p\}}=x^{\{p\}}+y^{\{p\}}+\sum_{i=1}^{p-1} \frac{s_i}{i}(x,y)$, where $s_i$ is the coefficient of $t^{i-1}$ in the formal expression $\mathrm {ad} (tx+y)^{p-1}(x)$.
 \end{enumerate}
 Here $\mathrm{ad}(x)$ stands for the self-map $y\mapsto [y,x]$ on $L$.
\end{definition}

Let  $\free^{\Lie^{s,\rho}_{\Fp}}$ be the associated free functor. 
\begin{proposition} \cite[p.149]{restrictedbasis}\label{prop: basis-restrictedlie}
    A basis for $\free^{\Lie^{s,\rho}_{\F}}(M)$ is given by $$\{u,u^{\{2\}},u^{\{2\}^2}=(u^{\{2\}})^{\{2\}},\ldots\},$$ where $u$ ranges over Lyndon words in letters  an $\F$-basis  of $M$. 
When $p>2$, a basis for $\free^{\Lie^{s,\rho}_{\Fp}}(M)$ is $$\{v\}\cup\{u,u^{\{p\}},u^{\{p\}^2}=(u^{\{p\}})^{\{p\}},\ldots\},$$ where $u$ ranges over Lyndon words of odd degrees in letters an $\Fp$-basis  of $M$ and $v$ ranges over Lyndon words of even degrees. 
\end{proposition}

\section{A bar spectral sequence}\label{section3}
This section serves as a preliminary examination of the bar spectral sequence for a $\E^{\mathrm{nu}}_\infty\tens \Fp$-algebra $A$, where $\E^{\mathrm{nu}}_\infty\tens \Fp$ is the  $\E^{\mathrm{nu}}_\infty$-operad in $\Mod_{\Fp}(\mathrm{Sp})$, obtained by the skeletal filtration of the geometric realization of the bar construction
$$\widetilde{E}^2_{s,t}=\pi_s(\pi_t(\B(\id, \E^{\mathrm{nu}}_\infty\tens \Fp,A)) \Rightarrow \pi_{s+t}(|\B(\id, \E^{\mathrm{nu}}_\infty\tens \Fp, A)|)\cong\taq_{s+t}(A).$$ 
Similarly, there is a dual bar spectral sequence
$$E_{s,t}^2=\pi_s(\pi_t(\B(\id, \E^{\mathrm{nu}}_\infty\tens \Fp,A)^{\vee} )) \Rightarrow \pi_{s+t}(|\B(\id, \E^{\mathrm{nu}}_\infty\tens \Fp, A)|^\vee).$$

When $A=\Sigma^{-j} \Fp$ is a trivial $\E^{\mathrm{nu}}_\infty\tens \Fp$-algebra, the $E^\infty$-page records unary operations on a degree $j$ class in the homotopy group of any spectral partition Lie algebra and those on a degree $-j$ cohomology class in the reduced mod p TAQ cohomology. Both spectral sequences converge strongly when $\pi_*(A)$ is of finite type by Boardman's criteria (Theorem 6.1 and 7.1 respectively in \cite{boardman}), since they are concentrated in the upper half plane and on the $E^2$-page each bidegree supports finitely many copies of $\Fp$. We will see that both spectral sequences degenerate on the second page in all cases of interest.
\subsection{The Dyer-Lashof algebra}
Dyer-Lashof operations are natural unary operations on the mod $p$ homology of infinite loop spaces and $\E_{\infty}$-algebras in Spectra. These operations and their relations were computed by Araki-Kudo \cite{kudoaraki}, Dyer-Lashof \cite{dyerlashof},  Cohen-Lada-May \cite{adem}, and Bruner-May-McCLure-Steinberger \cite{bmms}. Denote by $\R$ the non-unital mod $p$ Dyer-Lashof algebra. 
\begin{proposition}\cite[I.1]{adem}, \cite[III.1]{bmms}\label{adem}
At $p=2$, the Dyer-Lashof algebra $\R$ is generated by operations $Q^i$ in degree $i$ and weight $2$ subject to the Adem relations
$$Q^r Q^s =\sum_{r+s-i\leq 2i} \binom{i-s-1}{2i-r} Q^{r+s-i}Q^i$$ for $r>2s$. An $\F$-basis for $\R$ is given by monomials $Q^{i_1}\cdots Q^{i_n}$ with $i_l\leq 2i_{l+1}$ for $1\leq l<n$.

For $p$ an odd prime, the mod $p$  Dyer-Lashof algebra, also denoted by $\R$, is generated by operations $\beta^\epsilon Q^i$ in degree $2(p-1)i-\epsilon$ and weight $p$ for $\epsilon\in\{0,1\}$ and all $i$, subject to the Adem relations
$$\beta^{\epsilon}Q^r Q^s =\sum_{r+s-i\leq pi} (-1)^{r+i} \binom{(p-1)(i-s)-1}{pi-r} \beta^{\epsilon}Q^{r+s-i}Q^i$$
for $r>ps$, and
\begin{align*}
    \beta^\epsilon Q^r \beta Q^s =&(1-\epsilon)\sum_{r+s-i< pi} (-1)^{r+i}\binom{(p-1)(i-s)}{pi-r} \beta Q^{r+s-i}Q^i\\
    &-\sum_{r+s-i< pi} (-1)^{r+i}\binom{(p-1)(i-s)-1}{pi-r-1}  \beta^\epsilon Q^{r+s-i}\beta Q^i
\end{align*}
for $r\geq ps$. An $\Fp$-basis for $\R$ is given by monomials $\beta^{\epsilon_1}Q^{i_1}\cdots \beta^{\epsilon_n}Q^{i_n}$ with $i_l\leq pi_{l+1}-\epsilon_{l+1}$ for $1\leq l<n$.

\end{proposition}

We say that a modules $M$ over the Dyer-Lashof algebra $\R$ is \textit{unstable} (or \textit{allowable}) if the following conditions holds:

1. When $p=2$, for any nonempty sequence of operation $Q^I=Q^{i_1}\cdots Q^{i_k}$ and $x\in M$ of degree $j$, if  $i_l-i_{l+1}-\ldots- i_k<j$ for some $1\leq l\leq k$ then $Q^I(x)=0$.

2. When $p>2$, for any $x\in M$ of degree $j$ and any nonempty sequence of operation $\alpha=\beta^{\epsilon_1}Q^{i_1}\cdots \beta^{\epsilon_k}Q^{i_k}$, if  $2i_m-\epsilon_m< j+2(p-1)i_{m+1}+\ldots +2(p-1)i_k-\epsilon_1-\cdots \epsilon_k$ for some $1\leq m\leq k$  then $\alpha(x)=0$. 

Denote by $\Mod_{\R}$ the category of unstable $\R$-modules, and $\pazocal{A}_{\R}$ the monad associated with the free unstable $\R$-module functor. We will often omit the term unstable before $\R$-modules when the context is clear.

\begin{definition} \label{def: polyR}
    Let $\p_{\Fp}:\Mod_{\Fp}\rightarrow \Mod_{\Fp}$ be the monad associated with the free (graded weighted) commutative $\Fp$-algebra functor that sends the $\Fp$-module on a single generator $x$ to the polynomial algebra $\Fp[x]$. Define a monad $\p_{\R}$ on $\Mod_{\Fp}$ by setting $$\p_{\R}(M)=\p_{\Fp}(\pazocal{A}_{\R}(M))/J$$ for any $\Fp$-module $M$, where $J$ is the two-sided ideal generated under the commutative product $\tens$ by the relations $Q^{|x|/2}(x)=x^{\tens p}$ for any $x\in \pazocal{A}_{\R}(M)$ of even degree when $p>2$, and generated by $Q^{|x|}(x)=x^{\tens 2}$ for any $x\in \pazocal{A}_{\R}(M)$ when $p=2$. The monad composition map is given by the Adem relations and the Cartan formula $$Q^i(x\tens y)=\sum_j Q^j(x)\tens Q^{i-j}(y).$$
    
\end{definition}

 Let $\p_{\R}$ be the category of $\p_{\R}$-algebras, i.e. (graded weighted) commutative algebras over $\Fp$ with an unstable $\R$-module structure that is compatible with the commutative product $\tens$ in the sense that the Cartan formula is satisfied and  $Q^{|x|/2}(x)=x^{\tens p}$ for all even degree $x\in M$ when $p>2$, whereas $Q^{|x|}(x)=x^{\tens 2}$ when $p=2$.
 
 A classical result by May and McClure tells us that this is the target category for the mod $p$ homology of non-unital $\E_\infty$-$\Fp$-algebras.
 \begin{theorem}\cite[I.4.1]{adem}\cite[IX.2.1]{bmms}\label{may}
   For any $\E^{\mathrm{nu}}_\infty\tens \Fp$-algebra $X$, there is an isomorphism $$\pi_*(\free^{\E^{\mathrm{nu}}_\infty\tens \Fp} (X)) \cong \p_{\R}(\pi_*(X))$$ of free $\p_{\R}$-algebras.
 \end{theorem}
 


By repeatedly applying Theorem \ref{may}, the bar spectral sequence for a $\E^{\mathrm{nu}}_\infty\tens \Fp$-algebra $A$ with $M=\pi_*(A)$ can be rewritten as
$$\widetilde{E}^2_{s,t}=\pi_s(\pi_t(\B(\id, \E^{\mathrm{nu}}_\infty\tens \Fp,A))\cong\pi_{s,t}(\B(\id, \p_{\R},M))\Rightarrow \pi_{s+t}(|\B(\id, \E^{\mathrm{nu}}_\infty\tens \Fp, A)|).$$ 
Similarly, the dual bar spectral sequence takes the form
$$E_{s,t}^2=\pi_{s,t}(\B(\id, \p_{\R},M)^{\vee} ) \Rightarrow \pi_{s+t}(|\B(\id, \E^{\mathrm{nu}}_\infty\tens \Fp, A)|^\vee).$$
The $E^\infty$-page is the reduced mod $p$ $\mathrm{TAQ}$ cohomology $\taq^{-*}(A)$, or the homotopy group of the spectral partition Lie algebra $|\B(\id, \E^{\mathrm{nu}}_\infty\tens \Fp, A)|^\vee\simeq \mathrm{Lie^{\pi}_{\E_\infty,\Fp}}(A^\vee)$ when $A$ is a bounded-below $\Fp$-module of   finite type considered as a trivial $\E^{\mathrm{nu}}_\infty\tens \Fp$-algebra. Since we will only be concerned with objects of finite type over $\Fp$, we can switch freely between the two version by taking linear dual when computing the second page. It is easier to work with the bar construction, so we will focus on the bar spectral sequence. 

Observe that the $E^2$-page of the bar spectral sequence  $$\widetilde{E}^2_{s,t}=\pi_{,t}s(\B(\id, \p_{\R},M)=\pi_{s,t}(\mathbb{L}Q^{\p_{\R}}_{\Mod_{\Fp}}(M))$$ is the \textit{Andr\'{e}-Quillen homology} of $M$ with respect to the monad $\p_{\R}$.
Our plan is to find a suitable factorization of the indecomposable functor $Q^{\p_{\R}}_{\Mod_{\Fp}}$ to separate the unary and binary structures of the monad $\p_{\R}$. This will allow us to replace the bar construction computing its total left derived functor by a smaller double complex that is amenable to Koszul-duality-type computations. We will follow a similar strategy used in \cite[section 3.3]{me}, which is inspired by \cite[section 4.4]{bhk}. The subtlety lies in the identification of bottom non-vanishing Dyer-Lashof operations with self products, which precludes the factorization of $\p_{\R}$ as a composite monad.

\subsection{The derived indecomposable functor}
Before diving into the computation, we briefly recall without proof the homotopy theory of monads on the category of weighted graded $\Fp$-modules and especially the two-sided bar construction for simplicial objects, following closely Sections 3.1, 4.2 and 4.3 in \cite{bhk}. For the general theory, see for instance Sections 3.1 and 3.2 of \cite{jn}.

 Let $\mathbf{T}$ be an augmented monad on the category $\Mod  _{k}$ of weighted graded $k$-modules, where $k$ is a field. Suppose that $\mathbf{T}$ preserves reflexive coequalizers or filtered colimits. By augmented monad we mean that there is a map of monads $\mathbf{T}\rightarrow \id$ in $\Mod_{k}$, with $\id$ the identity functor considered as the trivial monad. Denote by $\mathrm{Alg}_{\mathbf{T}}(\Mod  _{k})$ the category of $\mathbf{T}$-algebras. The forgetful functor $U:\mathrm{Alg}_{\mathbf{T}}(\Mod  _k)\rightarrow \Mod  _k$ admits a left adjoint, the free functor $\free^{\mathbf{T}}:\Mod  _k\rightarrow\mathrm{Alg}_{\mathbf{T}}(\Mod_k)$. 

Denote by $s\Mod_k$ the category of simplicial weighted graded $k$-modules. Levelwise application of the adjunction $\free^{\mathbf{T}}\dashv U$ gives rise to an adjunction  between the corresponding categories of simplicial objects $$\free^{\mathbf{T}}\dashv U:\mathrm{Alg}_{\mathbf{T}}(s\Mod_{k})\rightarrow s\Mod_{k},$$ as well as a monad $\mathbf{T}$ on $s\Mod_{k}$. We equip $s\Mod_{k}$ with the standard cofibrantly generated model structure. Suppose that the path objects of $s\Mod_{k}$ lifts to $s\mathrm{Alg}_\mathbf{T}$, the category of simplicial $\mathbf{T}$-algebras. Then this adjunction induces a right transferred model structure on the category of simplicial $\mathbf{T}$-algebras, with weak equivalences and fibrations defined on the underlying simplicial weighted graded $k$-modules by \cite[Theorem 3.2, Remark 3.3]{jn}.

Denote by $T^{\mathbf{T}}:\Mod  _{k}=\mathrm{Alg}_{\mathbf{\id}}(\Mod  _{k})\rightarrow\mathrm{Alg}_{\mathbf{T}}(\Mod_{k})$ the trivial $\mathbf{T}$-algebra functor, which is induced by the augmentation. It has a left adjoint $Q^{\mathbf{T}}:\mathrm{Alg}_{\mathbf{T}}(\Mod  _{k})\rightarrow \Mod_{k}$, the indecomposable functor with respect to the $\mathbf{T}$-algebra structure, which satisfies $Q^{\mathbf{T}}\circ \free^{\mathbf{T}}\simeq \id$. Applying this adjunction levelwise to the corresponding categories of simplicial objects, we obtain a Quillen adjunction  $$Q^{\mathbf{T}}\dashv T^{\mathbf{T}}:s\mathrm{Alg}_\mathbf{T}\rightarrow s\Mod_{k}.$$ The left derived functor $\mathbb{L}Q^{\mathbf{T}}$ of $Q^{\mathbf{T}}$ can be computed by the following standard recipe.

\begin{construction}\label{constr: simplicial bar}
Given a right module $R:\Mod_{k}\rightarrow\mathcal{D}$ over $\mathbf{T}$, and a simplicial object $A$ in $\mathrm{Alg}_{\mathbf{T}}(\Mod _{k})$, one can apply the two-sided bar construction $\B(R,\mathbf{T},-)$ levelwise to $A$. The diagonal of the resulting bisimplicial complex is a simplicial object in $\mathcal{D}$, denoted by $\B(R,\mathbf{T}, A)$.
\end{construction}
In particular, if we regard a $\mathbf{T}$-algebra $A$ as the constant simplicial object on $U(A)$ equipped with a simplicial $\mathbf{T}$-algebra structure, denoted also as $A$ by abuse of notation, then $\B(R,\mathbf{T}, A)$ agrees with the usual two-sided bar construction.

Since the free resolution $\B(\free^{\mathbf{T}},\mathbf{T},A)$ is a cofibrant replacement of $A$ in the category of simplicial $\mathbf{T}$-algebras, the left derived functor of a functor $F$ can be computed by applying $F$ levelwise to a cofibrant replacement, so $$\mathbb{L}Q^{\mathbf{T}}(A)\simeq Q^\mathbf{T}\B(\free^{\mathbf{T}},\mathbf{T},A)=\B(\mathrm{id},\mathbf{T},A). $$

\subsection{A smaller complex for the $E^1$-page}
Now we would like to replace the bar complex on the $E^1$-page of the bar spectral sequence with the total complex of iterated bar constructions that deals with the unary and binary operations separately. 

It follows from \Cref{def: polyR} that killing the commutative product has the effect of killing the bottom Dyer-Lashof operation on a class. Hence for $p>2$ we shall consider the intermediate category $\Mod_{\R'}$ of modules over the Dyer-Lashof algebra $\R$ with unstability conditions $Q^i(x)=0$ for $2i\leq |x|$.  
Then the indecomposables functor $Q^{\p_{\R}}_{\Mod_{\Fp}}$ factors as $Q^{\Mod_{\R'}}_{\Mod_{\Fp}}\circ Q^{\p_{\R}}_{\Mod_{\R'}}$, and we have a  composite of adjunctions
\begin{center}
\begin{tikzpicture}[node distance=2.8cm, auto]
\pgfmathsetmacro{\shift}{0.3ex}
\node(P) {$\Mod_{\Fp}$};
\node(Q)[right of =P] {$\Mod_{\R'}$};
\node(R)[right of =Q]{$\p_{\R}$.};
\draw[transform canvas={yshift=-0.5ex},->] (P) --(Q) node[below,midway] {\footnotesize $T^{\Mod_{\R'}}_{\Mod_{\Fp}}$};
\draw[transform canvas={yshift=0.5ex},->](Q) -- (P) node[above,midway] {\footnotesize $Q^{\Mod_{\R'}}_{\Mod_{\Fp}}$}; 
\draw[transform canvas={yshift=-0.5ex},->] (Q) --(R) node[below,midway] {\footnotesize $T^{\p_{\R}}_{\Mod_{\R'}}$};
\draw[transform canvas={yshift=0.5ex},->](R) -- (Q) node[above,midway] {\footnotesize $Q^{\p_{\R}}_{\Mod_{\R'}}$}; 
\end{tikzpicture}
\end{center}
Whereas for $p=2$, the indecomposables functor $Q^{\p_{\R}}_{\Mod_{\F}}$ factors as $Q^{\Mod_{\R}}_{\Mod_{\F}}\circ Q^{\p_{\R}}_{\Mod_{\R}}$ by first killing the exterior algebra structure and then the unary operations, and we have a composite of adjunctions
\begin{center}
\begin{tikzpicture}[node distance=2.8cm, auto]
\pgfmathsetmacro{\shift}{0.3ex}
\node(P) {$\Mod_{\F}$};
\node(Q)[right of =P] {$\Mod_{\R}$};
\node(R)[right of =Q]{$\p_{\R}$.};
\draw[transform canvas={yshift=-0.5ex},->] (P) --(Q) node[below,midway] {\footnotesize $T^{\Mod_{\R}}_{\Mod_{\F}}$};
\draw[transform canvas={yshift=0.5ex},->](Q) -- (P) node[above,midway] {\footnotesize $Q^{\Mod_{\R}}_{\Mod_{\F}}$}; 
\draw[transform canvas={yshift=-0.5ex},->] (Q) --(R) node[below,midway] {\footnotesize $T^{\p_{\R}}_{\Mod_{\R}}$};
\draw[transform canvas={yshift=0.5ex},->](R) -- (Q) node[above,midway] {\footnotesize $Q^{\p_{\R}}_{\Mod_{\R}}$}; 
\end{tikzpicture}
\end{center}
We would like to use the above factorizations of $Q^{\p_{\R}}_{\Mod_{\Fp}}$ to obtain a double complex that is more computable than the bar complex $\B(\id, \p_{\R},M)$ in the special case where $M$ is a square-zero extension. 
\begin{lemma}
There is a weak equivalence of simplicial $\R'$-modules 
$$\mathbb{L}Q^{\p_{\R}}_{\Mod_{\R'}}(M)\simeq \B(\id, \p_{\Fp}, M)$$ for any $\Fp$-module $M$ considered as a trivial $\p_{\R}$-algebra when $p>2$. The simplicial $\R'$-module structure on the right-hand side is levelwise trivial.
Similarly, for $p=2$ there is a weak equivalence of simplicial $\R$-modules 
$$\mathbb{L}Q^{\p_{\R}}_{\Mod_{\R}}(M)\simeq \B(\id, \Lambda_{\F}, M)$$ for any $\F$-module $M$ considered as a trivial $\p_{\R}$-algebra, where $\Lambda_{\F}$ is the monad associated with the free exterior algebra over $\F$. The simplicial $\R$-module structure on the right-hand side is levelwise trivial.
\end{lemma}

\begin{proof}
When $p>2$, there is a map of augmented monads $\p_{\R}\rightarrow\p_{\Fp}\rightarrow \id$, the first of which kills all Dyer-Lashof operations that are not the bottom operations $Q^{|x|/2}(x)=x^{\tens p}$ on even classes. This is compatible with the Cartan formula because under this map, $Q^k(x\tens y)=\sum_{i+j=k} Q^i(x)\tens Q^{j}(y)$ has nonzero summands if and only if $i=|x|/2$ and $j=|y|/2$, in which case $Q^{(|x|+|y|)/2}$ is nonzero and is identified with the $p$-fold product on $x\tens y$. Whereas all terms in any Adem relation are zero under this map. When $M$ is an $\Fp$-module considered as a trivial $\p_{\R}$-algebra, the map $\p_{\R}(M)\rightarrow\p_{\Fp}(M)\rightarrow M$ is a map of $\p_{\R}$-algebras if we regard $\p_{\Fp}(M)$ as a $\p_{\R}$-algebra where all Dyer-Lashof operations vanish except for the bottom operations on even classes. Therefore we obtain a map of free bar resolutions
$$\Psi:\B(\free^{\p_{\R}}_{\Mod_{\Fp}}, \p_{\R}, M)\rightarrow \B(\free^{\p_{\Fp}}_{\Mod_{\Fp}}, \p_{\Fp}, M).$$ This is a weak equivalence of simplicial $\p_{\R}$-algebras, since weak equivalences are detected by the underlying simplicial $\Fp$-modules.

Next we want to show that applying $Q^{\p_{\R}}_{\Mod_{\R'}}$ preserves this weak equivalence, i.e.
$$\mathbb{L}Q^{\p_{\R}}_{\Mod_{\R'}}(M)\simeq Q^{\p_{\R}}_{\Mod_{\R'}}\B(\free^{\p_{\R}}_{\Mod_{\Fp}}, \p_{\R}, M)\rightarrow Q^{\p_{\R}}_{\Mod_{\R'}}\B(\free^{\p_{\Fp}}_{\Mod_{\Fp}}, \p_{\Fp}, M)$$ is a weak equivalence of simplicial $\R'$-modules. This is equivalent to showing that the underlying map of simplicial $\Fp$-modules  $U^{\Mod_{\R'}}_{\Mod_{\Fp}}(\Psi)$ is a weak equivalence. 

It follows from the definition of $\mathbf{P}_{\R}$ (\Cref{def: polyR}) that there is an isomorphism $$U^{\Mod_{\R'}}_{\Mod_{\Fp}}\circ Q^{\p_{\R}}_{\Mod_{\R'}}\cong Q^{\p_{\Fp}}_{\Mod_{\Fp}}\circ U^{\p_{\R}}_{\p_{\Fp}}$$ of functors landing in the $1$-category of $\Fp$-modules. Therefore we can rewrite  $U^{\Mod_{\R'}}_{\Mod_{\Fp}}(\Psi)$ as
\begin{align*}
   &U^{\Mod_{\R'}}_{\Mod_{\Fp}}\circ Q^{\p_{\R}}_{\Mod_{\R'}}\B(\free^{\p_{\R}}_{\Mod_{\Fp}}, \p_{\R}, M)\simeq Q^{\p_{\Fp}}_{\Mod_{\Fp}}\circ U^{\p_{\R}}_{\p_{\Fp}}\B(\free^{\p_{\R}}_{\Mod_{\Fp}}, \p_{\R}, M)\\ \rightarrow &U^{\Mod_{\R'}}_{\Mod_{\Fp}}\circ Q^{\p_{\R}}_{\Mod_{\R'}}\B(\free^{\p_{\Fp}}_{\Mod_{\Fp}}, \p_{\Fp}, M) \\\simeq& Q^{\p_{\Fp}}_{\Mod_{\Fp}}\circ U^{\p_{\R}}_{\p_{\Fp}}\B(\free^{\p_{\Fp}}_{\Mod_{\Fp}}, \p_{\Fp}, M). 
\end{align*}
This is indeed a weak equivalence since in both the source and the target, we are applying the left Quillen functor $Q^{\p_{\Fp}}_{\Mod_{\Fp}}$  to a free resolution of $M$ in the category of simplicial commutative $\Fp$-modules. Here we are use the fact that the underlying $\Fp$-algebra of a free $\mathbf{P}_{\R}$-algebra is free, see for instance \cite[1.5.7]{lawson}. In particular, the target is equivalent to $\B(\id, \p_{\Fp}, M)$.
Therefore we obtain a weak equivalence of simplicial $\R'$-modules
$$\mathbb{L}Q^{\p_{\R}}_{\Mod_{\Fp}}(M)\cong\B(\free^{\Mod_{\R'}}_{\Mod_{\Fp}}, \p_{\R}, M)\simeq\B(\id, \p_{\Fp}, M)$$
as desired.

The case $p=2$ is analogous, noting that there is a map of augmented monads $\p_{\R}\rightarrow\Lambda_{\F}\rightarrow \id$, the first of which kills all Dyer-Lashof operations, which is trivially compatible with the Adem relations and the Cartan formula.
\end{proof}

Let $\pazocal{A}_{\R'}$ be the additive monad associated to the free $\R'$-module functor for $p>2$.
Therefore the Andr\'{e}-Quillen homology of a trivial $\p_{\R}$ algebra $A=\mathrm{sqz}(M)$, i.e. the $E^2$-page of the bar spectral sequence
\begin{align*}
    \widetilde{E}^2_{*,*}\cong\pi_*(\mathbb{L}Q^{\p_{\R}}_{\Mod_{\Fp}}(M))&\cong\pi_*\big(Q^{\Mod_{\R'}}_{\Mod_{\Fp}}\circ Q^{\p_{\R}}_{\Mod_{\R'}}\B(\free^{\p_{\R}}_{\Mod_{\Fp}}, \p_{\R}, M)\big)\\&\cong\pi_*\big(Q^{\Mod_{\R'}}_{\Mod_{\Fp}}\B(\free^{\Mod_{\R'}}_{\Mod_{\Fp}}, \p_{\R}, M)\big)\\
    &\cong \pi_*\big(\mathbb{L}Q^{\Mod_{\R'}}_{\Mod_{\Fp}}(\B(\id, \p_{\Fp}, M))\big),
\end{align*} can be computed as the homotopy group of the double complex
$\B(\id, \pazocal{A}_{\R'}, \B(\id, \p_{\Fp}, M)).$ Similarly, for $p=2$ the $E^2$-page of the bar spectral sequence against a trivial $\mathbf{P}_{\R}$-algebra  can be computed as the homotopy group of the double complex
$\B(\id, \pazocal{A}_{\R}, \B(\id, \Lambda_{\Fp}, M)).$  In particular, we can employ a spectral sequence that computes these homotopy groups, as in 
 \cite[Proposition 3.5]{behrensrezk}.
\begin{lemma}\label{lem: Grothendieck}
    Let $M$ be a trivial $\p_{\R}$-algebra. For $p>2$ there is a Grothendieck spectral sequence
    $$\widetilde{E}^2_{s,s',t}=\pi_{s,t}\B\big(\id, \pazocal{A}_{\R'}, \pi_{s'+t'}\B(\id, \p_{\Fp}, M)\big)\Rightarrow\pi_{s+s',t}(\mathbb{L}Q^{\p_{\R}}_{\Mod_{\Fp}}(M)).$$
    If the underlying $\Fp$-module of $M$ is of finite type, then we have a dual Grothendieck spectral sequence $$E^2_{s,s',t}=\pi_{s,t}\B\big(\id, \pazocal{A}_{\R'}, \pi_{s'+t'}\B(\id, \p_{\Fp}, M)^\vee\big)^{\vee}\Rightarrow\pi_{s+s',t}(\mathbb{L}Q^{\p_{\R}}_{\Mod_{\Fp}}(M)^{\vee}).$$

    Whereas for $p=2$, we have a dual Grothendieck spectral sequence $$E^2_{s,s',t}=\pi_{s,t}\B\big(\id, \pazocal{A}_{\R}, \pi_{s'+t'}\B(\id, \Lambda_{\Fp}, M)^\vee\big)^{\vee}\Rightarrow\pi_{s+s',t}(\mathbb{L}Q^{\p_{\R}}_{\Mod_{\F}}(M)^{\vee}).$$
\end{lemma}
Note that the dual Grothendieck spectral sequence is strongly convergent under the assumption on $M$ by Theorem 7.1 and the Remark afterwards in \cite{boardman}, using again the fact that each bidegree of the $E^2$-page is a finite $\Fp$-module.

\section{Unary operations} \label{section4}
In this section, we compute the $E^2$-page of the dual Grothendieck spectral sequence when $p=2$ and $A=\Sigma^i \F$ or $p>2$ and $A=\Sigma^{i}\Fp$ with $i$ odd is a trivial $\E^{\mathrm{nu}}_\infty\tens \Fp$-algebra. Both the dual Grothendieck spectral sequence and the dual bar spectral sequence collapse on the $E^2$ page for degree reasons in these cases. This is also the case for the weight $p^k$ parts of both spectral sequences when $p>2$, $k\geq 1$, and $A=\Sigma^{i}\Fp$ with $i$ even, which we prove using that the associated homotopy fixed points spectral sequence collapses on the $E^2$-page.

\subsection{The unstable Ext groups over $\R$}
To compute the $E^2$-page of the dual Grothendieck spectral sequence, we start by computing the bigraded homotopy groups $\pi_{*,*}\B(\id,\pazocal{A}_{\R}, M)$ for a trivial $\pazocal{A}_{\R}$-algebra $M$ when $p=2$ and $\pi_{*,*}\B(\id,\pazocal{A}_{\R'}, M)$ for a trivial $\pazocal{A}_{\R'}$-algebra $M$ when $p>2$.
The main tool is Priddy's machinery of algebraic Koszul duality in \cite[Theorem 2.5]{priddy}. Since the $\Fp$-basis of $\R$ in \Cref{adem} is a Poincar\'{e}-Birkhoff-Witt basis in the sense of Priddy \cite[Theorem 5.3]{priddy}, the quadratic algebra $\R$ is a homogeneous Koszul algebra. Furthermore, it is finite in each degree. Hence the Koszul dual algebra of $\R$ is given by he Ext group $$\mathrm{Ext}^{*,*}_{\R}(\Fp,\Fp)=\pi_*((\B(\Fp, \R, \Fp)^{\vee}).$$ 

\subsubsection{The case $p=2$}
The Koszul generators of $$\mathrm{Ext}^{*,*}_{\R}(\F,\F)=\pi_*((\B(\F, \R, \F)^{\vee}).$$   are given by the collection $$( Q^i)^*:=[ (Q^i)^\vee]\in\mathrm{Ext}^{-1,*}_{\R}(\F,\F)$$ with  bidegree $(s,t)=(-1,-i)$ and weight 2. Here $(Q^i)^\vee$ denotes the linear dual of the class $Q^i=1|Q^i|1\in \B(\F,\R,\F)$. The composition product is given by juxtaposition, which corresponds to the Yoneda product on Ext groups, cf. \cite[p.42]{priddy} and \cite[Theorem 9.8]{mccleary}. The quadratic relations among the generators are the Koszul dual of the Adem relations (Proposition \ref{adem}), i.e. 
\begin{equation}\label{relationmod2}
   (Q^a)^*(Q^b)^*=\sum_{a+b-c> 2c, c\geq a-b}\binom{b-c-1}{a-2c-1} (Q^{a+b-c})^*(Q^c)^*
\end{equation}
for $a\leq 2b$, obtained by plugging the Adem relations into Priddy's machinery \cite[Theorem 2.5]{priddy} and discarding the summands whose binomial coefficient vanish.

We are interested in the \textit{unstable} $\mathrm{Ext}$ group
$$\mathrm{UnExt}^{*,*}_{\R'}(\F,\Sigma^{j} \F)=\pi_*(\B(\id, \pazocal{A}_{\R'}, \Sigma^{-j} \F)^{\vee}),$$ which is a variant of the Ext group $$\mathrm{Ext}^{*,*}_{\R}(\F,\F)=\pi_*((\B(\F, \R, \F))^{\vee})$$ obtained by regarding $\Sigma^{-j}\F$ as an unstable trivial module over $\R$ and imposing the unstability conditions $[Q^j|\alpha]=0$ for $j\leq |\alpha|$ in the bar complex, cf. \cite[\S 3]{unstable}. Note that the Koszul dual of the instability condition $Q^i|x=0$ is that $(Q^i)^*$ is undefined on $[x^\vee]$. It is thus convenient to record the operations $(Q^a)^*$ by a ringoid, which takes into account the degree of the element an operation is acting on. Explicitly, we would like the ringoid to take into account of two gradings, one corresponding to the homological degree in Ext, or equivalently the filtration degree in the dual bar spectral sequence, and the other corresponding to the internal degree.
\begin{definition}\label{ringoidmod2}
 Let $\pazocal{F}$ be the ringoid with objects the $\mathbb{Z}_{\leq 0}\times \mathbb{Z}$ and morphisms  freely generated over $\F$ under juxtaposition by the following elements: for any $s\leq 0$ and all $i,j$ satisfying $i\geq-j$, there is an element $(Q^i)^*\in \pazocal{F}\big((s,j),(s-1,j-i)\big)$ of weight 2.
 Let $\R_u^!$ be the quotient of $\pazocal{F}$ by the ideal generated by the  relations 
 \begin{equation}
   (Q^a)^*(Q^b)^*=\sum_{a+b-c> 2c, c\geq a-b}\binom{b-c-1}{a-2c-1} (Q^{a+b-c})^*(Q^c)^*
\end{equation}
for all $a,b$ satisfying $a\leq 2b$, $b\geq -j$ and $a\geq b-j$ in $\pazocal{F}\big((s,j), (s-2,j-a-b)\big)$.
\end{definition}
The functor $\free^{\R_u^!}$  takes the free $\R_u^!$-module on a $\F$-module $M$ with a further homological grading, i.e. the class $ (Q^{i_1})^* (Q^{i_2})^*\cdots ( Q^{i_k})^*(x)\in\free^{\R_u^!}(M) $ has homological degree $-k$ for $x\in M$. 

\begin{remark}\label{rmk: desuspension2}
There is an evident isomorphism $\R_u^!((s,i),(s',j))\cong \R_u^!((s-r,i),(s'-r,j))$
for any $i,j, s, s'$ and $r$ such that $s-r<0$. For any $t>0,$ there is an injection $$\mathrm{susp^t}:\R_u^!((s,i),(s-1,j))\hookrightarrow \R_u^!((s,i+t),(s-1,j+t)).$$ This is because if $(Q^{i-j})^*$ is defined on an element of degree $i$, then $i-j\geq-i>-i-t$ for any $t>0$, then $(Q^{i-j})^*$ is defined on any element of degree $i+t$. 
\end{remark}

\begin{corollary}\label{cor: unext p=2}
The unstable $\mathrm{Ext}$ group
$\mathrm{UnExt}^{*,*}_{\R}(\F,\Sigma^{j} \F)$ is  the free $\R_u^!$-module  $\R_u^!((0,j),-)$. Here we grade the Ext groups homologically. 
\end{corollary}
\begin{proof}
This follows from the argument given in \cite[Theorem 3.3]{unstable}. Since differentials in the bar complex respect internal degrees and hence the instability condition, the bar complex $\B(\id, \pazocal{A}_{\R},\Sigma^j\F)$ is a quotient of $\B(\F, \R,\Sigma^j\F)$ and thus the unstable Tor group $\mathrm{UnTor}^{*,*}_{\R}(\F,\Sigma^{j} \F)=\pi_*(\B(\id, \pazocal{A}_{\R},\Sigma^j\F))$ is the  quotient of $\mathrm{Tor}^{*,*}_{\R}(\F,\Sigma^{j} \F)$  obtained by imposing the instability condition.  Taking linear dual makes $\mathrm{UnExt}^{*,*}_{\R}(\F,\Sigma^{j} \F)$ a subgroup of $\mathrm{Ext}^{*,*}_{\R}(\F,\Sigma^{j} \F)$ with cokernel generated by the undefined operations. The composition is given by juxtaposition from the left, and hence preserves the undefinedness condition.
\end{proof}

\begin{proposition}\label{E2unary}
Let $A=\Sigma^j \F$. Then $\pi_{*}\Lie^{\pi}_{\F,\E_\infty}(A^{\vee})\cong \free^{\R_u^!}(\Sigma^{-j}\F)$ with respect to the total degree $s+t$. 
\end{proposition}
\begin{proof}
  If $A=\Sigma^j \Fp$ with $j$ odd, then $\pi_{*,*}\B(\id, \Lambda_{\Fp}, \Sigma^j \Fp)$
 consists of a single $\Sigma^{j} \Fp$. Thus the  dual Grothendieck spectral sequence (\Cref{lem: Grothendieck}) collapses on the $E^2$-page, and the $E^2$-page of the dual bar spectral sequence is concentrated in a single line $s=-k$ at each weight $2^k$ for $k\in \mathbb{N}$. Hence the  dual bar spectral sequence also collapses on the second page and there are no extension problems.  
\end{proof}

 \begin{remark}\label{rmk: additive p=2}
     The unary operations on a degree $j$ class $x$ is given by  $\pazocal{L}_2(\Sigma^j\F)\cong\free^{\R_u^!}(\Sigma^{j}\F)$.  The canonical map $\Sigma\pazocal{L}_2(\Sigma^{j}\F)\rightarrow\pazocal{L}_2(\Sigma^{j+1}\F)$ induces a map of $E^1$-pages of the $s=1$ line of the dual bar spectral sequences $$\pi_*\Sigma\mathrm{Bar}_1(\id,\p_{\R}, \Sigma^{-j}\F)^\vee\rightarrow\pi_*\mathrm{Bar}_1(\id,\p_{\R}, \Sigma^{-j-1}\F)^\vee$$ sending $\sigma(Q^{-|x|})^\vee|x$ to $(Q^{-|x|})^\vee|\sigma x$ since the Dyer-Lashof operations are stable. Passing to the $E^2$-pages, the suspension map sends $\sigma x^{\{2\}}$ to $(Q^{-|x|})^*(\sigma x)$. Note that the image is well-defined since $-|x|>-|\sigma x|$. On the other hand, for $i>-j$ the induced suspension map sends $\sigma (Q^{-|x|})^*(x)$ to $(Q^{-|x|})^*(\sigma x)$, which is precisely the map $\mathrm{susp}^1$ in \Cref{rmk: desuspension2}. We shall see in \Cref{restricted} that the linear dual of  the cycle $x\tens x=Q^{-|x|}|x^\vee$ on the line $s=1$ of the $E^1$-page $\pi_{*}\mathrm{Bar}_1(\id,\p_{\R}, \Sigma^{-j}\F)$ of the bar spectral sequence is fact the restriction $x^{[2]}$ associated with a shifted Lie bracket.
 \end{remark}

\subsubsection{The case where $p>2$}
Now we apply the same analysis to the odd primary case. As in the case $p=2$, the $\mathrm{Ext}_{\R}(\Fp,\Fp)=\pi_*((\B(\Fp, \R, \Fp))^{\vee})$ is isomorphic as an algebra to the Koszul dual of $\R$. The Koszul generators are given by the collection $$( \beta^{\epsilon}Q^i)^*:=[ (\beta^{\epsilon}Q^i)^{\vee}]\in\mathrm{Ext}^1_{\R}(\Fp,\Fp), \epsilon\in\{0,1\}, i\in\N$$ where $( \beta^{\epsilon}Q^i)^*$ has homological bidegree $(-1,-2(p-1)i+\epsilon)$ and weight $p$. Composition is given by juxtaposition corresponding to the Yoneda product. The quadratic relations are the Koszul dual of the Adem relations (Proposition \ref{adem}), i.e. 
\begin{equation*}
   (Q^a)^*( Q^b)^*+\sum_{a+b-c>pc}(-1)^{a-c}\binom{(p-1)(b-c)-1}{a-pc-1} ( Q^{a+b-c})^*( Q^c)^*=0
\end{equation*}
for $ a \leq pb$ ,
\begin{align*}
   (\beta Q^a)^*(Q^b)^*-\sum_{a+b-c\geq pc}(-1)^{a-c}\binom{(p-1)(b-c)}{a-pc} (Q^{a+b-c})^*(\beta Q^c)^*\\+\sum_{a+b-c>pc}(-1)^{a-c}\binom{(p-1)(b-c)-1}{a-pc-1} ( \beta Q^{a+b-c})^*( Q^c)^*=0
\end{align*}
for $a \leq pb, $
\begin{equation*}
   (\beta^\epsilon Q^a)^*(\beta Q^b)^*-\sum_{a+b-c\geq pc}(-1)^{a-c}\binom{(p-1)(b-c)-1}{a-pc} (\beta^\epsilon Q^{a+b-c})^*(\beta Q^c)^*=0
\end{equation*}
for $\epsilon\in\{0,1\}$ and $ a < pb$, cf. \cite{dual}.

Analogous to the case $p=2$, we dualize the unstability condition and record the Ext degree in homological grading using a ringoid.

\begin{definition} \label{ringoidodd}
 Let $\pazocal{F}$ be the ringoid with objects $\mathbb{Z}_{\leq 0}\times \mathbb{Z}$ and morphisms freely generated over $\Fp$ under juxtaposition by the following elements: for $2i>-j$ and any $s\leq 0$ there are elements $( Q^i)^*\in\pazocal{F}\big((s,j), (s-1,j-2(p-1)i)\big)$ and  $(\beta Q^i)^*\in\pazocal{F}\big((s,j), (s-1,j-2(p-1)i+1)\big)$. We suppress the first grading for ease of notation when there is no ambiguity.

 Let $\R_u^!$ be the quotient of $\pazocal{F}$ by the ideal generated by the following quadratic relations for all $s\leq 0$:
\begin{equation*}
   (Q^a)^*( Q^b)^*=-\sum_{a+b-c>pc,\,\, 2c>-j}(-1)^{a-c}\binom{(p-1)(b-c)-1}{a-pc-1} ( Q^{a+b-c})^*( Q^c)^*,
\end{equation*}
in $\pazocal{F}\big(j, j-2(p-1)a-2(p-1)b\big)$ for all $a,b\in\mathbb{Z}$ and  satisfying $ a \leq pb$, $2b>-j$, $2a>2(p-1)b-j$,
\begin{align*}
   (\beta Q^a)^*(Q^b)^*=\sum_{a+b-c\geq pc, 2c>-j}(-1)^{a-c}\binom{(p-1)(b-c)}{a-pc} (Q^{a+b-c})^*(\beta Q^c)^*\\-\sum_{a+b-c>pc, 2c>-j}(-1)^{a-c}\binom{(p-1)(b-c)-1}{a-pc-1} ( \beta Q^{a+b-c})^*( Q^c)^*
\end{align*}
in $\pazocal{F}\big(j, j-2(p-1)a-2(p-1)b+1\big)$ for all $a,b\in\mathbb{Z}$ satisfying $ a \leq pb$, $2b>-j$, $2a>2(p-1)b-j$, and
\begin{equation*}
   (\beta^\epsilon Q^a)^*(\beta Q^b)^*=\sum_{a+b-c\geq pc, 2c>-j}(-1)^{a-c}\binom{(p-1)(b-c)-1}{a-pc} (\beta^\epsilon Q^{a+b-c})^*(\beta Q^c)^*
\end{equation*}
in $\pazocal{F}\big(j, j-2(p-1)a-2(p-1)b+\epsilon+1\big)$ for $\epsilon\in\{0,1\}$ and $a,b\in\mathbb{Z}$ satisfying $ a < pb$, $2b>-j$, $2a>2(p-1)b-j-1$.
\end{definition}
A basis for $\R_u^!\big((s,j),(s-k,-)\big)$ is given by sequences $(\beta^{\epsilon_1}Q^{i_1})^*(\beta^{\epsilon_2}Q^{i_2})^*\cdots(\beta^{\epsilon_k}Q^{i_k})^*$ where $2i_k> -j$ and $i_l> p i_{l+1}-\epsilon$ for $1\leq l<k$. The functor $\free^{\R_u^!}$ takes the free  $(\pazocal{R}')^!$-module with a homologial grading that counts the number of generators,  i.e., the element $$ (\beta^{\epsilon_1}Q^{i_1})^*(\beta^{\epsilon_2} Q^{i_2})^*\cdots ( \beta^{\epsilon_k}Q^{i_k})^*(x)\in\free^{{\pazocal{R}_u}^!}(M)$$ with $x\in M$ has homological degree $-k$. 
\begin{remark}\label{rmk: suspensionodd}
Similar to the case $p=2$ (\Cref{rmk: desuspension2}), there is an isomorphism $$\R_u^!((s,i),(s',j))\cong \R_u^!((s-r,i),(s'-r,j))$$
for any $i,j, s, s'$ and $r$ such that $s-r<0$. For any $t>0,$ there is an injection $$\mathrm{susp}^t:\R_u^!((s,i),(s',j))\hookrightarrow \R_u^!((s,i+t),(s',j+t)),$$ since more operations are defined on classes with higher homological degree.
\end{remark}
\begin{remark}\label{rmk: bottom op oddclass}
For a class $x$ in odd internal degree $2d+1$, the right hand side of the relations never involve the bottom operations on $x$, i.e. the terms of the form  $(\beta^\epsilon Q^{a+b+d})^*(\beta Q^{-d})^*(x)$, unless $a=(p-1)b-d$ so $(\beta Q^a)^*$ is the bottom operation on the odd degree class $(Q^b)^*(x)$. This is because the coefficient of such a term is nonzero only if $(p-1)(b+d)-\epsilon\geq a+pd$, i.e., $(p-1)b-d-\epsilon\geq a$. But $2a\geq 2(p-1)b-2d$ by assumption. On the other hand, if $x$ is in even internal degree $2j$, then the bottom operation $(\beta Q^{-d+(p-1)b+1})^*$ on the odd class $(Q^b)^*(x)$ is zero. 
\end{remark}
The proof for \Cref{cor: unext p=2} works verbatim for the case $p>2$.
\begin{corollary}\label{cor: unext p>2}
    The unstable $\mathrm{Ext}$ group
$$\mathrm{UnExt}^{*,*}_{\R'}(\Fp,\Sigma^{-j} \Fp)=\pi_*(\B(\id, \pazocal{A}_{\R'}, \Sigma^{-j} \Fp)^{\vee})$$ is the free $\R_u^!$-module  $\R_u^!((0,j),-)$.
\end{corollary}
  Whereas the Tor group $\mathrm{Tor}^{*,*}_{\R'}(\Fp,\Fp)\cong {({\pazocal{R}}_u^!)}^{\vee}$ is a coalgebra generated by classes in $$\mathrm{Tor}^{1,*}_{\R'}(\Fp,\Fp)=\Fp\{[ \beta^{\epsilon}Q^i]1, i\in\N, \epsilon=0,1\}.$$ 

Now we can compute the unary operations on an odd generator using the dual bar spectral sequence for a trivial $\E^{\mathrm{nu}}_\infty\tens \Fp$-algebra $\Sigma^j \Fp$.
\begin{proposition}\label{E2unaryodd}
Let $A=\Sigma^j \Fp$. If $j$ is odd, then $\pi_{*}\Lie^{\pi}_{\Fp,\E_\infty}(A^{\vee})\cong \free^{\R_u^!}(\Sigma^{-j}\Fp)$ with respect to the total degree $s+t$. 
\end{proposition}
\begin{proof}
  If $A=\Sigma^j \Fp$ with $j$ odd, then $\pi_{*,*}\B(\id, \p_{\Fp}, \Sigma^j \Fp)$
 consists of a single $\Sigma^{j} \Fp$, since the tensor square of an odd class always vanishes due to graded commutativity. Thus the dual Grothendieck spectral sequence (\Cref{lem: Grothendieck}) collapses on the $E^2$-page, and the $E^2$-page of the dual bar spectral sequence is concentrated in a single line $s=-k$ at each weight $p^k$ for $k\in \mathbb{N}$. Hence the  dual bar spectral sequence also collapses on the second page and there are no extension problems.  
\end{proof}

When the input trivial $\E^{\mathrm{nu}}_\infty\tens \Fp$-algebra $A=\Sigma^j \Fp$ has $j$ even, the dual bar spectral sequence and the associated Grothendieck spectral sequence collapse on the $E^2$-page at weights all powers of $p$. We show this by an inductive degree comparison with the $\Fp$-basis of the $E^\infty$-page given by Brantner-Mathew in \Cref{dimension}.

\begin{lemma}\label{cor: E2unaryeven}
    Suppose that $p>2$ and $k\geq 1$. In weight $p^k$, both he dual bar spectral sequence for $A=\Sigma^j \Fp$ with $j$ even and its associated dual Grothendieck spectral sequence collapse on the $E^2$-page. Hence the weight $p^k$ part of $\pi_*(\spla(\Sigma^j \Fp))$ is isomorphic to the weight $p^k$ part of the free $\R^!_u$-module on a class $x_j$ in degree $j$.
\end{lemma}
\begin{proof}
    When $j$ is even, it follows from \Cref{dimension} that the weight $p$ unary operations on a degree $j$ class consists of one in each total degree $-2(p-1)i-\epsilon$ with $\epsilon=0,1$ and $i>0$. On the weight $p$ part of the $E^2$-page of the dual Grothendieck spectral sequence, the entries with these total degrees are exactly 1-dimensional, which is in one to one correspondence to the collection of generators $(\beta^{\epsilon}Q^i)^*(x_j)$ with $\epsilon=0,1$ and $i>-j/2$ of the weight $p$ part of the free $\R^!_u$-module on a class $x_j$ in degree $j$. 
    
    Note that any other class of weight $p$ on the $E^2$-page of the dual Grothendieck spectral sequence has to come from $\mathrm{Bar}_\bullet(\id,\p_{\Fp},\Fp\{x_j\})^\vee$ and consists of $p$-fold product of $x_j$ with product possibly coming from different iterations of $\p_{\Fp}$. But such elements, if they exist, have the same internal degree $pj$ and the smallest possible total degree $pj-p>pj-2(p-1)$, so they cannot support differentials among themselves and cannot survive to the $E^\infty$-page of the dual bar spectral sequence for degree reasons. Hence they do not exist.

    Now we induct on $k$. Suppose that the weight $p^i$ part of both spectral sequences collapse, so that the weight $p^i$ part of the $E^\infty$-page  are in bijection with weight $p^i$ part of the free $\R^!_u$-module on a class $x_j$ in degree $j$ and thus consists of iterations of weight $p$ operations.
    In weight $p^{k+1}$, the $\Fp$-basis of $\pi_*(\spla(A))$ given in \Cref{dimension} is in one to one correspondence to the collection $(\beta^{\epsilon_1}Q^{i_1})^*(\beta^{\epsilon_2}Q^{i_2})^*\cdots (\beta^{\epsilon_1}Q^{i_{k+1}})$ where $2i_{k+1}> -j$ and $i_l> p i_{l+1}-\epsilon$ for $l$. The bijection with a basis for the weight $p^{k+1}$ part of the free $\R^!_u$-module on a class $x_j$ in degree $j$ is as follows: given a monomial $(\beta^{\epsilon_1}Q^{i_1})^*(\beta^{\epsilon_2}Q^{i_2})^*\cdots(\beta^{\epsilon_k}Q^{i_k})^* (x_j)$, we send it to the sequence $(-2(p-1)i_1+\epsilon_1,\ldots, -2(p-1)i_k+\epsilon_k,0,x_j)$. By the inductive hypothesis, on the $E^2$-page of the dual Gronthendieck spectral sequence there are no weight $p^i$ classes coming from $\pi_*\mathrm{Bar}_\bullet(\id,\p_{\Fp},\Fp\{x_j\})^\vee$ for $i\leq k$. Furthermore, any potential weight $p^{k+1}$ therein would have the same internal degree and the smallest possible total degree $p^k(j-1)$, which is greater than any nonzero classes in the final answer. Hence they cannot support differentials among themselves and cannot survive to the $E^\infty$-page of the dual bar spectral sequence for degree reasons. This concludes the proof.
\end{proof}
\begin{remark}
    With some more bookkeeping, one can show with the same reasoning that the dual bar spectral sequence for $A=\Sigma^j\Fp$ and the associated dual Grothendieck spectral sequence collaspse at all weights when $j$ is even. For our purpose, we only need the degeneration at weights powers of $p$. 
\end{remark}

 Since for $p>2$, the weight $p$ part of the dual bar spectral sequence on a single generator and the associated Grothendieck spectral sequence collapse at the $E^2$-page, we can deduce a suspension isomorphism as in \Cref{rmk: additive p=2}.
 \begin{corollary}\label{cor: additive p>2}
 The weight $p$ part of the  canonical map $\Sigma\pazocal{L}_p(\Sigma^{j}\Fp)\rightarrow\pazocal{L}_p(\Sigma^{j+1}\Fp)$ induces a map of the $E^2$-pages of the dual bar spectral sequence, sending  $\sigma (\beta^\epsilon Q^i)^*(x)$ to $(\beta^\epsilon Q^i)^*(\sigma x)$ where $x$ is in internal degree $j$ and $i>-j/2$, which agrees with the suspension map $\mathrm{susp}^1$ in \Cref{rmk: suspensionodd}. 
  \end{corollary}

  \subsection{Unary operations on  the homotopy groups of spectral partition Lie algebras}
  Recall that the homotopy groups of  the free spectral partition Lie algebra on $\Sigma^{j}\Fp$ parametrizes unary operations on a degree $j$ homotopy class of a spectral partition Lie algebra and a degree $-j$ cohomology class in mod $p$ TAQ cohomology.
   Now we give a concrete construction of  unary operations
 on  the homotopy groups of any spectral partition Lie algebra $A$ using the collapse of the weight $p^k$ dual bar spectral sequence on a single generator as well as the associated Grothendieck spectral sequence for all $p$ and $k\geq 1$ (\Cref{E2unary}, \Cref{E2unaryodd} and \Cref{cor: E2unaryeven}). 
 
 \begin{construction}\label{unary}
Suppose that $\xi:\Sigma^{j} \Fp\rightarrow A$ represents a homotopy class $x\in \pi_{j}(A)$.

(1) Suppose that $p=2$. For any $r\in\mathbb{N}$ and sequence $$\alpha=(Q^{i_1})^*(Q^{i_2})^*\cdots (Q^{i_k})^*\in \R_u^!((0,2^rj-2^{r}+1),(-k,2^rj-2^{r}+1-m))$$ with $i_1+\cdots+i_k=m,$ there is a unique class  $R^{(i_1+1,i_2+1,\ldots,i_k+1)}(x^{\{2\}^r})\in \pi_{2^rj-2^r+1-m-k}(A)$ given by
 \begin{align*}
   \Sigma^{2^rj-2^r+1-m-k}\F\xrightarrow{\alpha} \free^{\R_u^!}(\Sigma^{j}\F)\hookrightarrow  \pi_*(\Lie^{\pi}_{\F,\E_{\infty}}(\Sigma^{j}\F))\xrightarrow{\xi_*} \pi_*(\Lie^{\pi}_{\F,\E_{\infty}}(A))\rightarrow\pi_*(A).
 \end{align*}

(2) Suppose that $p>2$. For any sequence $$\alpha=(\beta^{\epsilon_1}Q^{i_1})^*(\beta^{\epsilon_2}Q^{i_2})^*\cdots (\beta^{\epsilon_k}Q^{i_k})^*\in \R_u^!((0,j),(-k,j-m))$$ with $m=2(p-1)i_1+\cdots+2(p-1)i_k-\epsilon_1-\cdots-\epsilon_k,$ there is a unique class  $R^{(i_1,\ldots,i_k,1-\epsilon_1,\ldots,1-\epsilon_k)}(x)\in \pi_{j-m-k}(A)$ given by
 \begin{align*}
   \Sigma^{j-m-k}\Fp\xrightarrow{\alpha} \free^{\R_u^!}(\Sigma^{j}\Fp)\hookrightarrow \pi_*\Lie^{\pi}_{\Fp,\E_{\infty}}(\Sigma^{j}\Fp)\xrightarrow{\xi_*}  \pi_*\Lie^{\pi}_{\Fp,\E_{\infty}}(A)\rightarrow\pi_*(A).
 \end{align*}
 Translating to cohomological grading, these operations become cohomology operations on $\taq(A)$ for $A$ an $\E_\infty$-$\Fp$-algebra.
\end{construction}

We note that they are compatible with the suspension map.
\begin{proposition}\label{prop: stable}
    The operations $R^I$ and $R^{(I,\vec{\epsilon})}$ above are stable in the sense that any such operation $\alpha:\pi_m(L)\rightarrow\pi_{m-|\alpha|}(L)$  agrees with $\alpha:\pi_{m+1}(L)\rightarrow\pi_{m+1-|\alpha|}(L)$ for any spectral partition Lie algebra $L$. 
\end{proposition}
\begin{proof}
 The canonical map $\Sigma\Lie^\pi_{\Fp,\E_\infty}(\Sigma^j\Fp)\rightarrow\Lie^\pi_{\Fp,\E_\infty}(\Sigma^{j+1}\Fp)$ induces a map 
 of the weight $p$ part of the $E^2$-pages of the corresponding dual bar spectral sequences and the associated Grothendieck spectral sequences. Since both  spectral sequence collapse by \Cref{E2unary}, \Cref{E2unaryodd} and \Cref{cor: E2unaryeven}, it suffices to show that the canonical map $\Sigma \free^{\R^!_u}(\Sigma^j \Fp)\rightarrow\free^{\R^!_u}(\Sigma^{j+1}\Fp)$ sends $\sigma(Q^i)^*(x_j)$ to  $(Q^i)^*(\sigma x_j)$ for $p=2$, where $x_j$ has internal degree $j$ and $i>-j$, and $\sigma(\beta^{\epsilon}Q^i)^*(x_j)$ to  $(\beta^{\epsilon}Q^i)^*(\sigma x_j)$ for $p>2$, where $i>-j/2$. This  follows from \Cref{rmk: desuspension2}, \Cref{rmk: suspensionodd} and \Cref{cor: additive p>2}.
\end{proof}
Instead of proceeding to examine the relations among the unary operations of weight $p^k$, we take a detour to construct the structure of a shifted restricted Lie algebra on the homotopy groups of spectral partition Lie algebras. 

\section{A shifted Lie algebra structure with restriction}\label{sec: Lie}
In this section, we construct a shifted Lie bracket on the homotopy groups of spectral partition Lie algebras, or equivalent the $\Fp$-linear TAQ cohomology groups. We start by examining a homotopy fixed points spectral sequence, which reveals much of the shifted restricted Lie structure in question.

\subsection{A homotopy fixed points spectral sequence} \label{subsec: hfpss}
Since the category of $\Fp$-module spectra is equivalent to the derived category of chain complexes over $\Fp$ (\cite[7.1.1.16]{lurie}), for $A$ a bounded above $\Fp$-module spectrum, we can think of the homotopy groups of $|\B(\id, \E^{\mathrm{nu}}_\infty\tens \Fp, A^\vee)|^\vee$ as the homology of the chain complex $$C=C_*(\mathbb{D}|\B(\id, \E^{\mathrm{nu}}_\infty\tens \Fp, A^\vee)|)\cong \bigoplus_n (\Lie_{dg}^{s}(n)\tens A^{\tens n})^{h\Sigma_n}$$ by \cite[Remark 4.49]{pd}.  Here $\Lie_{dg}^{s}$ is the shifted Lie operad in the derived category of chain complexes over $\Fp$, with $\Lie_{dg}^{s}(r)$ concentrated in homological degree $1-r$. Then there is an associated homotopy fixed points spectral sequence  
\begin{equation}\label{hfpss}
        E^2_{s,t}=\bigoplus_{n} H^s\Big(\Sigma_n, \pi_t\big(\Lie_{dg}^{s}(n)\tens A^{\tens n}\big)\Big)\Rightarrow \bigoplus_{n}\pi_{t-s}\Big(\big(\Lie_{dg}^{s}(n)\tens A^{\tens n}\big)^{h\Sigma_n}\Big),
\end{equation}
which has a weight decomposition into the above summands induced by considering $A$ as a weighted spectrum concentrated in weight 1.

\begin{lemma}\label{lem: hfpsss=0}
In the homotopy fixed points spectral sequence (\ref{hfpss}) above, the line $s=0$ of the $E^2$-page is given by the free shifted restricted Lie algebra $\free^{\Lie^{s,\rho}_{\Fp}}(\pi_*(A))$ on $\pi_*(A)$.
\end{lemma}
\begin{proof}
    Note that the homology operad associated to $\Lie_{dg}^{s}$ is the shifted Lie operad $\pi_*(\Lie_{dg}^s)\cong \Lie^s$ over $\Fp$.
    The line $s=0$ of the $E^2$-page is given by $$\bigoplus_{n} H^0\Big(\Sigma_n, \pi_t\big(\Lie_{dg}^{s}(n)\tens A^{\tens n}\big)\Big)\cong \bigoplus_{n}\Lie^s(n)\tens^{\Sigma_n} \pi_*(A)^{\tens n}\cong \free^{\Lie^{s,\rho}_{\Fp}}(\pi_*(A)),$$ where the last equivalence follows from the shifted version of \cite[Theorem 1.2.5]{fresse}.
\end{proof}

\begin{lemma}\label{lem: hfpsswtp}
Let $p$ be any prime. If $A=\Sigma^j \Fp$, then the homotopy fixed spectral sequence (\ref{hfpss}) collapses on the $E^2$-page in weight $p$. If $p>2$ and $j$ is even, then it collapses in all weight.
\end{lemma}

\begin{proof}
    Since $\pi_t(\Lie^{s}(p)\tens (\Sigma^j\Fp)^{\tens p})=0$ unless $t=pj+1-p$, the weight $p$ part of the $E^2$-page of the homotopy fixed points spectral is concentrated on a single line $t=pj+1-p$. Hence weight $p$ part of the spectral sequence collapses and there are no extension problems.

    If $p>2$ and $j$ is even, then $\free^{\Lie^{s,\rho}}_{\Fp}(\Sigma^j\Fp)=\Fp\{x_j, [x_j,x_j]\}$ by \Cref{prop: basis-restrictedlie}. It follows from \Cref{lem: hfpsss=0} that the only nonzero elements are concentrated in weight $w=p^k$ or $2p^k$ as group cohomology classes of $H^*\Big(\Sigma_{w}, \pi_{w(j-1)}\big(\Lie_{dg}^{s}(w)\tens (\Sigma^j\Fp)^{\tens w}\big)\Big)$. Hence at each weight the nonzero elements again lie on a single line and the claim follows.
\end{proof}

\begin{corollary} \label{cor: hfpsswtp two gen}
    Let $p$ be any prime. Suppose that $A=\Sigma^{j_1}\Fp\oplus\Sigma^{j_2}\Fp$. The weight $p$ part of the homotopy fixed points spectral sequence (\ref{hfpss}) collapses on the $E^2$-page.
\end{corollary}
\begin{proof}
 The weight $p$ part of $\spla(A)$ has summands $\Lie_{dg}^{s}(p)\tens^{h\Sigma_p} (\Sigma^{j_i}\Fp)^{\tens p}$ for $i=1,2$. 
 The inclusions $\pi_*(\spla(\Sigma^{j_i}\Fp))\rightarrow \pi_*(\spla(A))$
detect all the higher cohomology classes on the $E^2$-page of the homotopy fixed points spectral sequence (\ref{hfpss}) for $A$, hence they are all permanent cycles. The remaining classes on the $E^2$-page are concentrated in $s=0$ and thus no further differential can happen.
\end{proof}

\subsection{A shifted Lie bracket with restriction}
Now we would like to lift the shifted restricted Lie algebra structure on the $E^2$-page of the homotopy fixed points spectral sequence (\ref{hfpss}).

 Recall that the second Goodwillie derivative of the identity functor $\partial_2(\mathrm{Id})\simeq \mathbb{S}^{-1}$ is a na\"{i}ve $\Sigma_2$-spectrum with trivial $\Sigma_2$-action. By Proposition \ref{bm535}, the weight 2 part of the free spectral partition Lie algebra on a bounded $\Fp$-module $A$ is $$\big((\partial_2(\mathrm{Id})\tens \Fp)\tens^{h\Sigma_2} A^{\tens 2}\big)\xhookrightarrow{\iota_2}\Lie^{\pi}_{\Fp,\E_{\infty}}(A)\simeq \bigoplus_n (\partial_n(\mathrm{Id})\tens \Fp)\tens^{h\Sigma_n} (A)^{\tens n}.$$ Here $\iota_n$ denotes the inclusion of the weight $n$ homogeneous piece.
 
 The binary operation $[-,-]$ representing the shifted Lie bracket is encoded by the weight two part of the structure map
 $$\xi_2:\mathbb{S}^{-1}\tens (A^{\tens 2})^{h\Sigma_2}\simeq (\partial_2(\mathrm{Id})\tens \Fp)\tens^{h\Sigma_2} A^{\tens 2}\xrightarrow{\iota_2} \Lie^{\pi}_{\Fp,\E_{\infty}}(A)\rightarrow A,$$ 
 explicitly given as follows.
 \begin{construction}\label{bracket}

 For $x:X=\Sigma^{j} \Fp\rightarrow A, y:Y=\Sigma^{k} \Fp\rightarrow A$ representing two homotopy classes of a spectral partition Lie algebra $A$, we have a map of  $\Lie^{\pi}_{\Fp,\E_{\infty}}$-algebras
 $$\theta:\Lie^{\pi}_{\Fp,\E_{\infty}}(X\oplus Y)\rightarrow \Lie^{\pi}_{\Fp,\E_{\infty}}(A)\rightarrow A,$$ where the second map is the $\Lie^{\pi}_{\Fp,\E_{\infty}}$-algebra structure map of $A$.  There is a binary operation $[-,-]$ on $\pi_*(A)$ represented by  the map
\begin{equation}\label{liebracket}
\mathbb{S}^{-1}\tens X\tens Y\simeq  \mathbb{S}^{-1}\tens(X\tens Y\oplus Y\tens X)^{h\Sigma_2}\hookrightarrow \mathbb{S}^{-1}\tens\big((X\oplus Y)^{\tens 2}\big)^{h\Sigma_2}\simeq\partial_2(\mathrm{Id})\tens^{h\Sigma_2}(X\oplus Y)^{\tens 2}
\end{equation}
followed by the composite $$\theta\circ\iota_2: \partial_2(\mathrm{Id})\tens^{h\Sigma_2}(X\oplus Y)^{\tens 2}\hookrightarrow\Lie^{\pi}_{\Fp,\E_{\infty}}(X\oplus Y)\rightarrow A,$$
which sends the pair of homotopy classes $x$ and $y$ to a class $[x,y]\in\pi_*(A)$.
\end{construction}
Translating to cohomological grading,  we constructed a binary operation 
$$[-,-]: \taq^m(A)\tens \taq^n(A)\rightarrow \taq^{m+n+1}(A)$$ for any $\E^{\mathrm{nu}}_{\infty}$-$\Fp$-algebra $A$.

Note that the homotopy fixed points spectral sequence 
    \begin{equation}
        E^2_{s,t}=\bigoplus_{n} H^s\Big(\Sigma_n, \pi_t\big(\Lie_{dg}^{s}(n)\tens A^{\tens n}\big)\Big)\Rightarrow \bigoplus_{n}\pi_{t-s}\Big(\big(\Lie_{dg}^{s}(n)\tens A^{\tens n}\big)^{h\Sigma_n}\Big),
    \end{equation}  
collapses in any weight $q$ that is prime to $p$ since higher cohomology groups of $\Sigma_q$ vanish and in weight 2 at $p=2$ because of \Cref{cor: hfpsswtp two gen}. Since the $s=0$-line is the free restricted Lie algebra on $\pi_*(A)$ by \Cref{lem: hfpsss=0}, the binary operation $[x,y]\in \pi_*(A)$ is represented uniquely up to a nonzero scalar by the weight 2 part of the composite
  \begin{align*}
     \Sigma^{-1}(\Sigma^j \Fp\oplus\Sigma^k \Fp)\rightarrow \free^{\Lie^{s,\rho}_{\Fp}}(\Sigma^j \Fp\oplus\Sigma^k \Fp)\hookrightarrow \pi_*(\Lie^{\pi}_{\Fp,\E_{\infty}}(\Sigma^j \Fp\oplus\Sigma^k \Fp))\xrightarrow{\theta_*}\pi_*(A).
 \end{align*}

Here we are using the fact that a shifted Lie algebra is an algebra over the quadratic shifted Lie operad generated by one element in $\Lie^s(2)$, so we can fix a choice of this generator for the shifted Lie bracket on the $E^2$-page of the homotopy fixed points spectral sequence so that $c=1$. By abuse of notation we use $[-,-]$ to denote both the binary operation on $\pi_*(A)$ and the shifted Lie bracket on the $E^2$-page.

Now we show that this binary operation is indeed a shifted Lie bracket in a general sense.

\begin{proposition}
The binary operation $[-,-]$ in Construction \ref{bracket} satisfies graded commutativity $[x,y]=(-1)^{|x||y|}[y,x]$ and the graded Jacobi identity $$(-1)^{|x||z|}[x,[y,z]]+(-1)^{|x||y|}[y,[z,x]]+(-1)^{|y||z|}[z,[x,y]]=0.$$  
\end{proposition}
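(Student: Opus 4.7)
The plan is to reduce both identities to the universal case via naturality, and then invoke the algebraic structure on the $E^2$-page from Propositions \ref{Einfty} and \ref{Einftyodd}. Given classes $x\in\pi_m(A)$, $y\in\pi_n(A)$, $z\in\pi_l(A)$ represented by maps from $X=\Sigma^m H\Fp$, $Y=\Sigma^n H\Fp$, $Z=\Sigma^l H\Fp$ into $A$, the induced map of spectral partition Lie algebras $\theta\colon \mathrm{Lie}^{\pi}_{\Fp,\E_\infty}(X\oplus Y\oplus Z)\to A$ is manifestly compatible with Construction \ref{bracket}, in the sense that $[\theta_*(u),\theta_*(v)]=\theta_*[u,v]$ and analogously for iterated brackets. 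It therefore suffices to verify graded commutativity and the graded Jacobi identity on the tautological classes $\tilde{x},\tilde{y},\tilde{z}$ inside $\pi_*(\mathrm{Lie}^{\pi}_{\Fp,\E_\infty}(X\oplus Y\oplus Z))$.

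For graded commutativity, the plan is to unwind the defining map (\ref{liebracket}). Because $\partial_2(\mathrm{Id})\simeq\mathbb{S}^{-1}$ carries the \emph{trivial} $\Sigma_2$-action, the bracket factors through the $\Sigma_2$-homotopy orbits of $(X\oplus Y)^{\tens 2}$, in which the symmetry isomorphism of $\Mod_{H\Fp}$ identifies $\tilde{x}\tens\tilde{y}$ with $(-1)^{|x||y|}\tilde{y}\tens\tilde{x}$ via the Koszul sign rule. Composing with $\iota_2\circ\theta$ delivers $[x,y]=(-1)^{|x||y|}[y,x]$.

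For the graded Jacobi identity, by Propositions \ref{Einfty} and \ref{Einftyodd} the dual bar spectral sequence for $\mathrm{Lie}^{\pi}_{\Fp,\E_\infty}(X\oplus Y\oplus Z)$ collapses on $E^2$ with no extension problems, and the identification recorded immediately before this proposition matches the weight-$2$ part of Construction \ref{bracket} with the algebraic shifted Lie bracket on $\free^{\mathrm{sLie}^\rho_\F}(\Sigma^m\F\oplus\Sigma^n\F\oplus\Sigma^l\F)$, respectively on $\free^{\mathrm{sLie}_{\Fp}}(\Sigma^m\Fp\oplus\Sigma^n\Fp\oplus\Sigma^l\Fp)$ at odd primes, up to the normalization chosen there. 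The key claim needed is that the iterated spectral bracket $[\tilde{x},[\tilde{y},\tilde{z}]]$, obtained by first forming $[\tilde{y},\tilde{z}]\in\pi_*$ and then feeding it back into Construction \ref{bracket}, lifts along this identification to the iterated algebraic bracket in the weight-$3$ part of $E^2$. Once this is established, the Jacobi identity in the free shifted Lie algebra, summed over cyclic permutations of $(\tilde{x},\tilde{y},\tilde{z})$, descends to the spectral Jacobi identity and hence to $A$ via $\theta_*$.

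The main obstacle is precisely this last compatibility, because the iterated bracket composes two independent uses of $\iota_2$ together with the monad structure map $\mathrm{Lie}^\pi\circ\mathrm{Lie}^\pi\to\mathrm{Lie}^\pi$. I plan to handle it by the same type of diagram chase used in the proof of Theorems \ref{unarymod2} and \ref{unaryodd}: invoking Brantner's compatibility result (\cite[Theorems 3.5.1 and 4.3.2]{brantner}) to identify the monadic composition with the algebraic operadic composition on the $E^2$-page, now applied to the weight-$(2,1)$ and $(1,2)$ components encoding the iterated bracket rather than to compositions of weight-$p$ unary operations.
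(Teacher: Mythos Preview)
Your treatment of graded commutativity is essentially the paper's: both reduce to the sign produced by the transposition $(12)\in\Sigma_2$ acting through the symmetry isomorphism, using that $\partial_2(\mathrm{Id})\simeq\mathbb{S}^{-1}$ carries the trivial action. One small slip: the construction goes through homotopy \emph{fixed points} $(-)^{h\Sigma_2}$, not homotopy orbits, since spectral partition Lie algebras are built from $(\partial_n\otimes H\Fp)\otimes^{h\Sigma_n}(-)^{\otimes n}$. This does not affect your commutativity argument, but be careful with the terminology.

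For the Jacobi identity your route diverges from the paper's, and the step you flag as ``the main obstacle'' is where the gap lies. The paper does \emph{not} pass through the $E^2$-page at all; it works directly with the operadic structure maps. Writing $\nu\colon \partial_2(\mathrm{Id})\otimes(\partial_1(\mathrm{Id})\otimes\partial_2(\mathrm{Id}))\to\partial_3(\mathrm{Id})$ for the relevant piece of the monad composition, the Jacobi identity is reformulated as the statement that $\nu+\sigma_*\nu+\sigma_*^2\nu$ is null-homotopic (with $\sigma=(123)$), and this is then quoted from \cite[Proposition~5.2]{omar}. In other words, Jacobi is established at the level of the spectral Lie operad itself, before any spectral sequence enters.

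Your proposed resolution of the obstacle---invoking Brantner's \cite[Theorems~3.5.1 and~4.3.2]{brantner}---does not apply here. As the paper uses it in the proof of Theorems~\ref{unarymod2} and~\ref{unaryodd}, that result concerns an \emph{additive} monad $\mathbf{T}$ associated to a free unstable module functor over an algebra $R$, and identifies the induced composition on $\pi_*|\B(\id,\mathbf{T},-)|^\vee$ with a sheared Yoneda product on unstable $\mathrm{Ext}$ groups. The iterated bracket lives in the $\p_{\Fp}$ (commutative/Harrison) factor of the double complex, not in the additive $\pazocal{A}_{\R'}$ factor, so there is no Yoneda product to compare against and the hypotheses of Brantner's theorem are not met. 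If you want to salvage your strategy you would need a separate argument that the monad composition restricted to the weight-$(1,2)$ summand agrees on $E^2$ with the operadic composition for $\mathrm{sLie}$; but establishing that is essentially the operadic computation the paper performs (via \cite{omar}), so you would not be avoiding it.
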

We shall see in the next proposition that $[x,x]=0$ for all $x$ at $p=2$ and $[x,[x,x]]=0$ for all $x$ at $p=3$. Hence $[-,-]$ equips the homotopy groups of any spectral partition Lie algebra with a shifted Lie algebra structure as is the convention of this paper.
 \begin{proof}
Graded commutativity $[x,y]=(-1)^{|x||y|}[y,x]$ follows by construction, with the sign coming from the induced action of the transposition $(12)\in \Sigma_2$. To check  the graded Jacobi identity of the shifted bracket, we use an argument adapted from \cite{omar}. Let $A=\Sigma^{j}\Fp\oplus\Sigma^{k}\Fp\oplus \Sigma^l \Fp$. The iteration $[[-,-],-]$ of the binary operation $[-,-]$  is given by
the weight 3 summand
\begin{align*}
    &(\partial_2(\mathrm{Id})\tens \Fp)\tens\Big(\big((\partial_1(\mathrm{Id})\tens \Fp)\tens^{h\Sigma_1}  A \big)\tens\big((\partial_{2}(\mathrm{Id})\tens \Fp)\tens^{h\Sigma_2} A^{\tens 2}\big)\Big)\\ \xhookrightarrow{(\ref{liebracket})}&\Big[( \partial_2(\mathrm{Id})\tens \Fp)\tens^{h\Sigma_2}\Big(\big((\partial_1(\mathrm{Id})\tens \Fp)\tens^{h\Sigma_1}  A \big)\oplus\big((\partial_{2}(\mathrm{Id})\tens \Fp)\tens^{h\Sigma_2} A^{\tens 2}\big)\Big)^{\tens 2}\Big][1] \\ \rightarrow&(\partial_3(\mathrm{Id})\tens \Fp)\tens^{h\Sigma_3}A^{\tens 3}
\end{align*}
 of the monad composition $\Lie^{\pi}_{\Fp,\E_{\infty}}\circ \Lie^{\pi}_{\Fp,\E_{\infty}}\rightarrow\Lie^{\pi}_{\Fp,\E_{\infty}}$ applied to $A$. Since $\partial_2(\mathrm{Id})\simeq \mathbb{S}^{-1}$ and  $\partial_1(\mathrm{Id})\simeq \mathbb{S}$ both have trivial actions by the symmetric groups, we deduce that the source of the above structure map is equivalent to $$\partial_2(\mathrm{Id})\tens (\partial_1(\mathrm{Id})\tens \partial_2(\mathrm{Id}))\tens \Fp \tens (A^{\tens 3})^{h\Sigma_3}.$$ Denote by $\nu$ the structure map $\partial_2(\mathrm{Id})\tens (\partial_1(\mathrm{Id})\tens \partial_2(\mathrm{Id}))\rightarrow\partial_3(\mathrm{Id})$. The graded Jacobi identity
$$(-1)^{|x||z|}[x,[y,z]]+(-1)^{|x||y|}[y,[z,x]]+(-1)^{|y||z|}[z,[x,y]]=0$$ 
is then equivalent to showing that $\nu+(\sigma)_*\nu+(\sigma)^2_*\nu$ is null-homotopic, where $(\sigma)_*$ is the induced action of the cyclic permutation $(123)$. It was proved in \cite[Proposition 5.2]{omar} that $\nu+(\sigma)_*\nu+(\sigma)^2_*\nu$ is null-homotopic. 
\end{proof}

Next we investigate the interaction between the shifted Lie bracket $[-,-]$ and the unary operation in Construction \ref{unary}.
 
 \begin{proposition}\label{vanish}
 Given any $x,y\in\pi_*(A)$ and  unary operation $\alpha$ of weight greater than one, we have $[x, \alpha(y)]=0$ unless $p=2$ and $\alpha$ is an iteration of the bottom operations on $y$ or $p>2$, $y$ is in odd degree, and $\alpha$ is an iteration of bottom unary operations on $y$. In particular $[x,x]=0$ for all $x$ when $p=2$ and $[x,[x,x]]=0$ for all $x$ when $p=3$.
 \end{proposition}
\begin{proof}
We use an argument adapted from \cite[Proposition 4.3.15]{brantner}. For ease of notations we will not be using the logarithmic grading convention, and will denote the weight $k$ part by appending $(k)$ on the right. Suppose that $\alpha$ is a nonempty sequence of operations with weight $w$ divisible by $p$,  and $x:\Sigma^j \Fp\rightarrow A, y:\Sigma^k \Fp\rightarrow A$ representing two homotopy classes of a spectral partition Lie algebra $A$.  The operation $[x,\alpha(y)]$ is encoded by the weight $w+1$ part of the structure map  $\Lie^{\pi}_{\Fp,\E_{\infty}}\circ \Lie^{\pi}_{\Fp,\E_{\infty}}\rightarrow\Lie^{\pi}_{\Fp,\E_{\infty}}$, i.e.,
 \begin{align}\label{w+1}
 \begin{split}
      \Sigma^{j+k+|\alpha|-1}\Fp
      &\rightarrow d_2\tens\Big(\big(d_1\tens^{h\Sigma_1}  \Sigma^j \Fp\big)\tens\big(d_w\tens^{h\Sigma_w} (\Sigma^k \Fp)^{\tens w}\big)\Big)\\
      &\xhookrightarrow{(\ref{liebracket})} \Big[d_2\tens^{h\Sigma_2}\Big(\big(d_1\tens^{h\Sigma_1}  \Sigma^j \Fp \big)\oplus\big(d_w\tens^{h\Sigma_w} (\Sigma^k \Fp)^{\tens w}\big)\Big)^{\tens 2}\Big](w+1)\\
      &\rightarrow d_{w+1}\tens^{h(\Sigma_1\times\Sigma_{w})} (\Sigma^{j}\Fp\otimes\Sigma^{kw}\Fp)\\
     &\hookrightarrow\Lie^{\pi}_{\Fp,\E_{\infty}}(\Sigma^{j}\Fp\oplus\Sigma^{k}\Fp)\rightarrow \Lie^{\pi}_{\Fp,\E_{\infty}}(A)\rightarrow A,
  \end{split}
 \end{align}
 where we write $d_n$ for $\partial_n(\mathrm{Id})\tens \Fp$ for ease of notation.
Note that the action obtained by restriction to $\Sigma_1\times \Sigma_w\subset \Sigma_{w+1}$ on $\partial_{w+1}(\mathrm{Id})$ is freely induced from the action of the trivial subgroup on $\mathbb{S}^{-w}$ (cf. \cite[proof of 4.3.15]{brantner}). For any finite group $G$, we have $(\mathrm{Ind}^{G}_{\{e\}}(X)\tens Y)^{hG}\simeq X \tens Y$ for all $G$-spectra $Y$ by the Wirthm\"{u}ller isomorphism. Hence 
$$(\partial_{w+1}(\mathrm{Id})\tens \Fp)\tens^{h(\Sigma_1\times\Sigma_{w})} \Sigma^{j+kw}\Fp\simeq\Sigma^{j+(k-1)w}\Fp.$$
In particular, the $\Fp$-module of weight $w+1$ operations on spectral partition Lie algebras  coming from the bracket of one weight one operation and one weight $w$ operation is one-dimensional with a generator given by  $[[\cdots[[x,y],y]\cdots],y]$, where we take the bracket with $y$ exactly $w$ times. In the free case this class is nonzero, since the associated homotopy fixed points spectral sequence collapses at weight $w+1$ and this bracket is a nonzero element on the line $s=0$.

If $\alpha$ represents an iteration of the self-bracket, then $[x,\alpha(y)]=0$ by the Jacobi identity when $p\neq 3$. When $p=3$ and $|x|=k$ is even, $[x,[x,x]]$ is in degree $3k-2$. The weight 3 part of the homotopy fixed points spectral sequence on a single generator, which collapses on the $E^2$-page by \Cref{lem: hfpsswtp}, is 0 in total degree $3k-2$, and in particular the element representing $[x,[x,x]]$ is 0.

Suppose that $\alpha$ is not an iteration of the self-bracket, and not an iteration of the bottom operations on $x$ when $p=2$, or an iteration of the bottom operation $R^{(-|x|+1)/2}$ on odd $x$ when $p>2$. Then a comparison of total degrees  shows that $[x,\alpha(y)]$ has strictly smaller total degree than the generator $\gamma=[[\cdots[[x,y],y]\cdots],y]$ of weight $w+1$ operations  coming from the bracket of one weight one operation and one weight $w$ operation. Therefore it has to be zero.

Finally, when $p=2$, there is only one unary operation of degree $-|x|+1$ on any class $x$ up to a scalar, and in the homotopy fixed points spectral sequence this class is given by the restriction. Hence we deduce that $[x,x]=0$ since taking self-bracket is an additive operation while the restriction is not.
\end{proof}

On the other hand. we expect to see a shifted restricted Lie algebra structure  with the restriction $x^{\{p\}}$ on with  given by the bottom operation $R^{(-|x|+1)/2}(x)$ on any odd degree class $x$ when $p>2$ and the bottom operation $R^{-|x|+1}(x)$ on any class $x$, as is hinted by the $s=0$ line of the $E^2$-page of the homotopy fixed points spectral sequence in \Cref{lem: hfpsss=0}. This was also noted by Basterra and Mandell, cf. \cite[Example 1.8.8]{lawson}. Now we lift the structure of a restriction to the homotopy groups of spectral partition Lie algebras.

\begin{lemma}\label{lem: hfpssE^2}
If $p>2$ and $A=\Sigma^j\Fp$ with $j$ odd, then the restriction $x^{\{p\}}$ on the degree $j$ class $x$ representing $\Sigma^j\Fp$ on the $E^2$-page of the homotopy fixed points spectral sequence (\ref{hfpss}) survives to the $E^\infty$-page and is detected by $\lambda_j R^{(-j+1)/2}(x)\in\pi_{pj+1-p}\spla(A)$ for a nonzero constant $\lambda_j\in\Fp$.

If  $A=\Sigma^j\F$ for any $j$,  then the restriction $x^{\{2\}}$ on the degree $j$ class $x$ representing $\Sigma^j\F$ on the $E^2$-page of the homotopy fixed points spectral sequence (\ref{hfpss}) survives to the $E^\infty$-page and is detected by $R^{-j+1}(x)\in\pi_{2j-1}\spla(A)$.
\end{lemma}
\begin{proof}
If $p>2$ and $j$ is odd, then the $s=0$ line of the weight $p$ part of the $E^2$-page 
$$H^0\big(\Sigma_p; \Lie^{s}(p)\tens (\Sigma^{j}\Fp)^{\tens p})\big)\cong \Lie^{s}(p)\tens^{\Sigma_p} (\Sigma^{j}\Fp)^{\tens p}$$ contains an element that serves as the restriction $x^{\{p\}}$  in $\free^{\Lie^{s,\rho}_{\Fp}}(\Sigma^{j}\Fp)$. Note that $x^{\{p\}}$ is in topological degree $pj+1-p$ and the only element in  the weight $p$ part of $ \pi_{pj+1-p}(\Lie^{\pi}_{\Fp,\E_\infty}(\Sigma^{j}\Fp))$ comes from the bottom operation $(\beta Q^{(-j+1)/2})^*(x)$ up to a unit $\lambda$ on the $E^2$-page of the dual bar spectral sequence (cf. \Cref{E2unaryodd}). Hence the image of $x^{\{p\}}$ in $ \pi_{pj+1-p}(\Lie^{\pi}_{\Fp,\E_\infty}(\Sigma^{j}\Fp))$ under the degeneration of the weight $p$ part of the spectral sequence  is given by $\lambda R^{(-j+1)/2}(x)$.  This unit $\lambda$ is fixed for any class of a given odd degree $j$  by naturality of the restriction map on a class in degree $j$ and the bottom Dyer-Lashof operation on an odd class in degree $j$ as the $p$-fold Massey product on $x$.

If $j$ is even, then $\pi_{pj+1-p}(\Lie^{\pi}_{\Fp,\E_\infty}(\Sigma^{j}\Fp))=0$ (cf. \Cref{E2unaryodd}) so  the class $x$ representing $\Sigma^j\Fp$ does not admit a restriction since, which is as expected for a shifted restricted Lie algebra.

The proof for $p=2$ follows from the exact same dimension counting and degree comparison, and there is no ambiguity for the unit $\lambda$ over $\F$.
\end{proof}

\begin{theorem}\label{restricted}
 The homotopy groups of a spectral partition Lie algebra over $\Fp$ admit a restriction map $x\mapsto x^{\{p\}}$ that coincides with the bottom operation $\lambda_{|x|}R^{(-j+1)/2}$ on any class $x$ in odd degree $j$ when $p>2$, and the bottom operation $R^{-|x|+1}(x)$ on any class $x$ when $p=2$. 
\end{theorem}
\begin{proof}
We spill out the details for the case $p>2$, noting that the case $p=2$ is the same but simpler.

First we show that this assignment is indeed a restriction, in the sense that $[y,\lambda_{|x|}R^{(-|x|+1)/2}(x)]$ is the $p$-fold bracket $[[\cdots[[y,x],x]\cdots],x]$ for an odd class $x$ in the homotopy groups of a $\spla$-algebra.  Take $A=\Sigma^j\Fp\oplus \Sigma^k \Fp$, with $j$ odd, and consider the homotopy fixed points spectral sequence (\ref{hfpss}). The $(p+1)$-th summand of the spectral sequence collapses on the $E^2$-page, since the group cohomology of $\Sigma_{n}$ with coefficients in $\Mod_{\Fp}$ is concentrated in degree 0 when $n$ is coprime to $p$. It follows from \Cref{lem: hfpssE^2} that the restriction $x^{\{p\}}$ on the generator $x$ of $\Sigma^j\Fp$ on the weight $p$ part of the $E^2$-page of (\ref{hfpss}) survives to the element $\lambda_j R^{(-j+1)/2}(x)$. Furthermore, the identity $[y,x^{\{p\}}]=[[\cdots[[y,x],x]\cdots],x]$ on the $E^2$-page survives to an identity in
$\pi_{pj+k-p}(\Lie^{\pi}_{\Fp,\E_\infty}(\Sigma^{j}\Fp\oplus\Sigma^{k}\Fp))$. 

On the other hand, suppose that $j_1$ and $j_2$ are both odd and let $x_1,x_2$ represent the generator of $\Sigma^{j_1}\Fp$ and $\Sigma^{j_2}\Fp$ on the $E^2$-page of the homotopy fixed points spectral sequence (\ref{hfpss}) for $A=\Sigma^{j_1}\Fp\oplus\Sigma^{j_2}\Fp$. The weight $p$ part of $E^2$-page has an $\Fp$-basis given by one generator of each nonzero cohomology group of $\Sigma_p$ with coeffecients in $\Lie^s(p)\tens (\Sigma^{j_i}\Fp)^{\tens p}$ for $i=1,2$, and elements of  a basis $B$ for the weight $p$ part of the free shifted Lie algebra on generators $x_1$ and $x_2$. On the other hand, the weight $p$ part of $E^\infty$-page has an $\Fp$-basis given by the weight $p$ unary operations on $x_1$ and $x_2$ respectively, and elements of  $B$ by \Cref{cor: hfpsswtp two gen}. By \Cref{lem: hfpssE^2}, $x_i^{\{p\}}=\lambda_{j_i}R^{(-j_i+1)/2}(x_i)$ for $i=1,2$. Then the identity $(x_1+x_2)^{\{p\}}=x_1^{\{p\}}+x_2^{\{p\}}+\sum_{i=1}^{p-1} \frac{s_i}{i}(x,y)$ on the line $s=0$ of the $E^2$-page survives to on identity $(x_1+x_2)^{\{p\}}=\lambda_j R^{(-j_1+1)/2}(x_1)+\lambda_k R^{(-j_2+1)/2}(x_2)+\sum_{i=1}^{p-1} \frac{s_i}{i}(x_1,x_2)$ on the $E^\infty$-page via the degeneration of the weight $p$ part of the spectral sequence, where $s_i$ is the coefficient of $t^{i-1}$ in the formal expression $\mathrm {ad} (tx+y)^{p-1}(x)$. 

Finally, we want to show that the collection $x^{\{p\}}:=\lambda_j R^{(-|x|+1)/2}(x)$ for $|x|=j$ odd and $\lambda_j$ a unit depending only on $j$ defines a restriction map $(-)^{\{p\}}$ for the shifted Lie bracket on the homotopy groups of spectral partition Lie algebras by extending to linear sums of classes $x,y$ with $|x|=j\neq|y|=k$ odd via  $(x+y)^{\{p\}}:=\lambda_j R^{(-j+1)/2}(x)+\lambda_k R^{(-k+1)/2}(y)+\sum_{i=1}^{p-1} \frac{s_i}{i}(x,y)$.
The $p$th iteration of $[-,x]$ on a class $y$ is encoded
 by a summand in the weight $p+1$ part of the iterated monad composition $$(\Lie^{\pi}_{\Fp,\E_{\infty}})^{\circ p}\rightarrow (\Lie^{\pi}_{\Fp,\E_{\infty}})^{\circ p-1}\rightarrow\cdots\rightarrow \Lie^{\pi}_{\Fp,\E_{\infty}}$$ applied to $\Sigma^j\Fp\oplus \Sigma^k\Fp$. Explicitly, this summand is the $(p-1)$-th iteration of $\partial_2(\mathrm{Id})\tens (\partial_1(\mathrm{Id})\tens(-))$ on $\partial_2(\mathrm{Id})$. Note that the last step  $\Lie^{\pi}_{\Fp,\E_{\infty}}\circ \Lie^{\pi}_{\Fp,\E_{\infty}}\rightarrow\Lie^{\pi}_{\Fp,\E_{\infty}}$  of the above chain of compositions is
 \begin{align*}
       &(\partial_2(\mathrm{Id})\tens \Fp)\tens\Big(\big((\partial_1(\mathrm{Id})\tens \Fp)\tens^{h\Sigma_1}  \Sigma^j \Fp\big)\tens\big((\partial_{p}(\mathrm{Id})\tens \Fp)\tens^{h\Sigma_p} (\Sigma^k \Fp)^{\tens p}\big)\Big)\\
      \rightarrow &(\partial_{p+1}(\mathrm{Id})\tens \Fp)\tens^{h(\Sigma_1\times\Sigma_{p})} (\Sigma^{j}\Fp\otimes\Sigma^{kp}\Fp)
 \end{align*}
 as in (\ref{w+1}), which we showed to be one-dimensional in Proposition \ref{vanish} with $[[\cdots[[y,x],x]\cdots],x]$ where $x$ appears $p$ times a  generator. The monad composition  induces a map of homotopy fixed points spectral sequences, both of which collapse on the $E^2$-page at weight $p+1$ with no extension problems. Hence we get a map that is the weight $p+1$ part of 
 $$\bigoplus_n \Big(\Lie^{s}(n)\tens \big(\pi_*(\bigoplus_m\Lie^{s}(m)\tens^{h\Sigma_m}(\Sigma^j\Fp\oplus\Sigma^k\Fp)^{\tens m})\big)^{\tens n}\Big)^{\Sigma_n}\rightarrow\bigoplus_n \Big( \Lie^{s}(n)\tens (\Sigma^j\Fp\oplus\Sigma^k\Fp)^{\tens n}\Big)^{\Sigma_n}$$
 along the line $s=0$ on the $E^2$-pages. This map has as summand \begin{align*}
     &\Lie^{s}(2)\tens^{\Sigma_2}\Big(\big(\Lie^{s}(1)\tens(\Sigma^j\Fp\oplus\Sigma^k\Fp)^{\tens 1}\big)^{\Sigma_{1}}\oplus\big(\Lie^{s}(p)\tens(\Sigma^j\Fp\oplus\Sigma^k\Fp)^{\tens p}\big)^{\Sigma_{p}}\Big)^{\tens 2}
     \\&\rightarrow\big(\Lie^{s}(p+1)\tens(\Sigma^j\Fp\oplus\Sigma^k\Fp)^{\tens p+1}\big)^{\Sigma_{p+1}},
 \end{align*} with further summand 
 \begin{align*}
     \phi:\Lie^{s}(2)\tens\Big(\Sigma^j\Fp\tens\big(\Lie^{s}(p)\tens(\Sigma^k\Fp)^{\tens p}\big)^{\Sigma_{p}}\Big)\rightarrow\big(\Lie^{s}(p+1)\tens(\Sigma^j\Fp\oplus\Sigma^k\Fp)^{\tens p+1}\big)^{\Sigma_{p+1}}.
 \end{align*}
The map $\phi$ agrees with \Cref{bracket} on the $E^\infty$-page, i.e., it is the evaluation of the free  Lie bracket. The image of $[y, R^{(-|x|+1)/2}(x)]$ under $\phi$ is $[y, x^{\{p\}}]=[[\cdots[[y,x],x]\cdots],x]$ up to a unit $\lambda_{|x|}$. Hence on the $E^\infty$-page we have $[y, \lambda_{|x|} R^{(-|x|+1)/2}(x) ]=[[\cdots[[y,x],x]\cdots],x]$  as desired.

To identify the iteration $x^{\{p\}^k}$ of the restriction map on a class $x$ in odd degree $j$, we represent the class $y=R^{(-j+1)/2}(x)=x^{\{p\}}$ by a map $y:\Sigma^{pj-p+1}\Fp\rightarrow\spla(\Sigma^j\Fp)$. Since $pj-p+1$ is odd, the class $R^{-p(j-1)/2}(y)$ is the restriction $\frac{1}{\lambda_{pj-p+1}}y^{\{p\}}$ on y in $\pi_*\spla(\Sigma^{pj-p+1})$ by \Cref{lem: hfpssE^2}. Its image under the induced composition of homotopy groups of $$\spla(\Sigma^{pj-p+1})\xrightarrow{\spla(y)}\spla\circ\spla(\Sigma^j\Fp)\rightarrow\spla(\Sigma^j\Fp),$$
where the last map is the monadic composition, is given by $R^{-p(j-1)/2}R^{(-j+1)/2}(x)$ by \Cref{lem: composition Lp and P}. Hence $\lambda_{pj-p+1}\lambda_j^p R^{-p(j-1)/2}R^{(-j+1)/2}(x)=x^{\{p\}^2}$. Now induction along the $k$ implies that $x^{\{p\}^k}$ is the $k$th iteration of the bottom operation on an odd class up to a unit.
\end{proof}

\begin{remark}
    Note that the restriction map cannot detected on the $E^2$-page of the dual bar spectral sequence when $p>2$, since the $p$-fold bracket  $[y,x^{\{p\}}]$ with  $x$ lives in filtration $p$ of the bar filtration, whereas $[y,R^{(-|x|+1)/2}(x)]$ lives in filtration 2; nor can it be detected on the $E^2$-page of the associated dual Grothendieck spectral sequence when $p=2$, since the associated filtration linearizes the unary operations.

    In particular, we see that for $p=2$, the bottom operation $R^{-j+1}(x)=x^{\{2\}}$ on a class $x$ of the homotopy groups of a spectral partition Lie algebra $L$ is not additive. Similarly, for $p>2$ the bottom operation $R^{(-j+1)/2}(x)=x^{\{p\}}$ on a class $x$ in odd degree $j$ of the homotopy groups of a spectral partition Lie algebra $L$ is not additive. 
\end{remark}

Combining \Cref{vanish} and \Cref{restricted}, we have the following theorem. 
  \begin{theorem}\label{compatible}
The binary operation $[- , -]$ constructed above makes the homotopy groups of any spectral partition Lie algebra $A$  a shifted restricted Lie algebra, with Lie bracket $$[-, -]:\pi_j(A)\tens \pi_k(A)\rightarrow\pi_{j+k-1}(A)$$
over $\Fp$. If $p=2$, for all $j$ and $x\in\pi_j(A)$ the restriction $x^{\{2\}}$ is represented by the bottom operation $R^{-j+1}(x)$. The restriction map on a sum of classes $x,y$ in degrees $j\neq k$ is given by $$(x+y)^{\{2\}}=R^{-j+1}(x)+ R^{-k+1}(y)+ [x,y].$$ If $p>2$,  for all odd $j$ and $x\in\pi_j(A)$ the restriction $x^{\{p\}}$ is the bottom operation $R^{(-j+1)/2}(x)$ up to a unit $\lambda_j$. The restriction map on a sum of odd classes $x,y$ in degrees $j\neq k$ are given by $$(x+y)^{\{p\}}=\lambda_j R^{(-j+1)/2}(x)+\lambda_k R^{(-k+1)/2}(y)+\sum_{i=1}^{p-1} \frac{s_i}{i}(x,y),$$ where $s_i$ is the coefficient of $t^{i-1}$ in the formal expression $\mathrm {ad} (tx+y)^{p-1}(x)$.

The bracket is compatible with the unary operations in Theorem \ref{unarymod2}  and \ref{unaryodd} in the  sense that $[x, \alpha (y)]=0$ for $x,y\in\pi_*(A)$ and any unary operations $\alpha$ of weight greater than one that is not an iteration of the restriction map. 
Equivalently, for any $\E^{\mathrm{nu}}_{\infty}$-$\Fp$-algebra $A$,  there is a shifted Lie bracket with restriction
$$[-,-]: \taq^j(A)\tens \taq^k(A)\rightarrow \taq^{j+k+1}(A)$$ satisfying the above conditions.
 \end{theorem}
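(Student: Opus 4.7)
The plan is to aggregate the machinery built up in Proposition \ref{vanish} and Lemma \ref{restricted}, which have already done the substantive work. Construction \ref{bracket} produces $[-,-]$ at the $H\Fp$-module level from the weight-2 summand $\mathbb{S}^{-1} \tens X \tens Y \hookrightarrow \mathrm{Lie}^{\pi}_{\Fp,\E_\infty}(X\oplus Y) \to A$, and the preceding proposition already established graded commutativity (from the trivial $\Sigma_2$-action on $\partial_2(\mathrm{Id}) \simeq \mathbb{S}^{-1}$) together with the graded Jacobi identity (via Antol\'{i}n-Camarena's null-homotopy of $\nu + \sigma_*\nu + \sigma^2_*\nu$). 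The identities $[x,x]=0$ at $p=2$ and $[x,[x,x]]=0$ at $p=3$, as well as the compatibility clause $[x,\alpha(y)]=0$ for unary $\alpha$ of positive weight that is not an iteration of the restriction, are direct corollaries of Proposition \ref{vanish}.

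Next I would identify the restriction on a single class. At $p=2$, the bottom operation $R^{-|x|+1}(x)$ is the unique nonzero weight-2 element in the correct degree, and the computation in Proposition \ref{vanish} already matches $[y, R^{-|y|+1}(y)]$ with the generator $[[y,y],y]$ of the one-dimensional $\Fp$-module of weight-3 brackets; since self-bracketing vanishes but the restriction does not, this identifies $x^{[2]}=R^{-|x|+1}(x)$. At $p>2$ the restriction is invisible on the $E^2$-page of the dual bar spectral sequence for filtration reasons, so I would invoke instead the homotopy fixed point spectral sequence
\[
E^2_{s,t} = H^s\bigl(\Sigma_p,\, \pi_t(\mathrm{sLie}(p)\tens (\Sigma^j \Fp)^{\tens p})\bigr) \Rightarrow \pi_{t-s}\bigl((\mathrm{sLie}(p) \tens (\Sigma^j H\Fp)^{\tens p})^{h\Sigma_p}\bigr)
\]
from Lemma \ref{restricted}, which is concentrated on a single line and hence collapses. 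Fresse's identification of the $s=0$ row with the free shifted restricted Lie algebra furnishes a canonical class $x^{[p]}$, which by matching total degrees must equal $\lambda_j R^{(-j+1)/2}(x)$ for a unit $\lambda_j$. Functoriality under self-maps of $\Sigma^j H\Fp$ forces $\lambda_j$ to depend only on $j$.

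The sum formula $(x+y)^{[p]} = \lambda_j R^{(-j+1)/2}(x) + \lambda_k R^{(-k+1)/2}(y) + \sum_{i=1}^{p-1}\frac{s_i}{i}(x,y)$ (and its $p=2$ analogue with $[x,y]$ in place of the $s_i$ sum) is obtained by repeating the homotopy fixed point analysis for $A = \Sigma^j H\Fp \oplus \Sigma^k H\Fp$: the row $s=0$ is the free shifted restricted Lie algebra on two generators, in which the Jacobson identity holds by definition, and the weight-$p$ summand of the spectral sequence collapses because $\Sigma_p$-cohomology is concentrated in degree zero for coefficients supported in a single $\pi_t$. To propagate the ad-identity $[y, x^{[p]}] = [[\cdots[[y,x],x]\cdots],x]$ from the $E^2$-page to the $E^\infty$-page, I would trace it through the weight-$(p+1)$ piece of the monad composition $\mathrm{Lie}^{\pi}_{\Fp,\E_\infty}\circ\mathrm{Lie}^{\pi}_{\Fp,\E_\infty}\to\mathrm{Lie}^{\pi}_{\Fp,\E_\infty}$ exactly as in Proposition \ref{vanish}, where the corresponding space of operations is one-dimensional so the identification is forced.

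The main technical obstacle is the $p>2$ restriction step, because the operation is not visible on the $E^2$-page of the dual bar spectral sequence and one must instead detour through a genuinely different spectral sequence; once $\lambda_j$ is pinned down, the remainder is bookkeeping. A secondary subtlety is confirming that the shifted restricted Lie algebra operations assembled in these ways agree with those manufactured by Construction \ref{bracket} and the unary operations of Construction \ref{unary}, but both are compatible with the monad composition map, so this reduces to the universal computation already carried out.
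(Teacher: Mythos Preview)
Your proposal is correct and matches the paper's approach: Theorem \ref{compatible} is stated in the paper as a summary (``To sum up, we have the following theorem'') with no separate proof, since all substantive content has been established in Proposition \ref{vanish} and Lemma \ref{restricted}, and you have accurately identified and recapitulated those inputs. One minor slip: in your $p=2$ restriction paragraph you wrote $[y, R^{-|y|+1}(y)]$ and $[[y,y],y]$ where you meant $[x, R^{-|y|+1}(y)]$ and $[[x,y],y]$, but the intent is clear.
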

 \begin{remark}\label{comparebrackets}
  The interaction between the unary operations and the shifted Lie bracket on the homotopy groups of spectral partition Lie algebras differ from that on the homology of spectral Lie algebras. It was shown in \cite{omar,kjaer} that on the mod $p$ homology groups of spectral Lie algebras, the bracket $[x,\alpha(y)]$ always vanishes if $\alpha$ is a unary operation of weight greater than one. In comparison, on the homotopy groups of spectral partition Lie algebras the bracket $[x,\alpha(y)]$ does not necessarily vanish when $\alpha$ is an iteration of the restriction map, which is a non-additive unary operation. 
  
  This phenomenon also shows up in the Lubin-Tate theory of spectral Lie algebras, as was observed in \cite[Proposition 4.3.16]{brantner} that the non-additive unary operation $\theta$ interacts nontrivially with the bracket. For instance, when $p=2$, the bottom non-vanishing operation $\bar{Q}^{|x|}$ on a mod 2 homology class $x$ of a free spectral Lie algebra is identified with the nonzero self-bracket $[x,x]$ by \cite[Lemma 6.4]{omar}. Hence $[y, \bar{Q}^{|x|}(x)]=0$ by the Jacobi identity for all $x,y$. In comparison, the bottom operation $R^{-|x|+1}$ on a  class in the homotopy group of a free spectral partition Lie algebra over $\F$ represents the restriction on $x$, so $[y,R^{-|x|+1}(x)]=[[y,x],x]$ is nonzero. Whereas self-brackets always vanish in shifted restricted Lie algebras over $\F$, cf. \cite[Remark 1.2.9]{fresse}.
 \end{remark}

\section{The algebraic structure of the homotopy groups of spectral partition Lie algebras} \label{section5}
In this section, we deduce all natural operations on the homotopy groups of spectral partition Lie algebras and mod $p$ TAQ (co)homology. First we show that composition product of unary operations of weight $p^k$ on the homotopy groups of spectral partition Lie algebras agrees, up to a shearing, with the Yoneda product on the $E^2$-page of the dual bar spectral sequence. This makes use a general result of Brantner (\cite[Theorem 3.5.1 and 4.3.2]{brantner}). Then we combine the weight $p^k$ unary operations with the shifted restricted Lie structure and use a dimension comparision with Brantner-Mathew (\cite{bm}) to show that we have identified all the algebraic structure on the homotopy groups of spectral partition Lie algebras.

\subsection{Relations among unary operations of weight $p^k$}

A convenient way to encode the structure of unary operations of weight $p^k$ coming from Ext groups with composition given by a shearing of the Yoneda product is via a power ring, as was introduced in \cite{brantner} to encode additive unary operations on the Lubin-Tate theory of spectral Lie algebras. In this sense our power ring is a twisted version of ringoids. 
\begin{definition}
 A \textit{power ring} is a collection  $$P=\{P^j_k[w]\}_{(j,k,w)\in\mathbb{Z}^2\times\mathbb{Z}_{\geq 0})}$$ of abelian groups with elements $\iota_{i}\in P^i_i[0]$ for all $i$, equipped with associative and unital composition maps
 $P^i_j[v]\times P^j_k[w] \rightarrow P^i_k[v+w]$.

 For a given prime $p$, an element in $P^j_k[w]$ represents a unary operation of weight $p^w$. A \textit{module} over the power ring $P$ is a (weighted) $\Fp$-module $M=\bigoplus_{(j,w)\in\mathbb{Z}\times\mathbb{Z}_{\geq 0}} M_j(w)$ with structure maps $P^i_j[v]\times M_j(w) \rightarrow M_i(p^vw)$ that are compatible with the composition maps in $P$.
\end{definition}
\begin{remark}\label{rmk: powerring}
    Note that our convention differs from \cite[Definition 3.17]{brantner} and \cite[Definition 4.5]{bhk} in that we switch to a logarithmic grading convention for the weight grading. Furthermore, we relax the condition of bilinearity of the composition map to accommodate the fact that the restriction map (represented by the bottom operation by \Cref{restricted}) is nonadditive, which we show in \Cref{cor: betabottomadditive} is the only nonadditive unary operation of weight $p$. Note that the homotopy groups of spectral partition Lie algebras are still modules over the power ring of unary operations despite the nonadditivity of the restriction map, since in the Adem relations, the restriction map either doesn't appear in the relation or appear on both sides of the relation (cf. \Cref{rmk: bottom op oddclass}).
    The underlying reason is that when $p>2$, on the associated graded of the bar filtration, i.e. the $E^\infty$-page of the dual bar spectral sequence, the unary operations $R^i$ and $\beta^\epsilon R^i$ are all additive and assemble into a power ring in the sense of Brantner. Similarly, when $p=2$, on the associated graded of the filtration associated with the Grothendieck spectral sequence, the unary operations $R^i$ are all additive. In other words, there is a nontrivial extension of algebraic structure on the $E^\infty$-page of the dual bar spectral sequence. 
\end{remark}

 \begin{definition}\label{powerring}
The collection $\{\pazocal{P}^j_k[w]:=\R_u^!((0,k),(-w,j+w)), w>0\}$, along with $\pazocal{P}^i_i[0]:=\Fp\{\iota_i\}$ for all $i$ and $\pazocal{P}^j_i[0]=\emptyset$ for $i\neq j$, defines a power ring $\pazocal{P}$, with composition product given by the sheared Yoneda product
\begin{equation}\label{diagram: composition in P}
    \begin{tikzcd}
     \pazocal{P}^i_j[v]\times\pazocal{P}^j_k[w]\ar[ddd, dashed]\ar[r,"\cong"] &\R_u^!((0,j),(-v,i+v))\times\R_u^!((0,k),(-w,j+w))\ar[d,"\mathrm{susp}^w\times \id",hook]\\
        &\R_u^!((0,j+w),(-v,i+v+w))\times\R_u^!((0,k),(-w,j+w))\ar[d,"\cong"]\\
         &\R_u^!((-w,j+w),(-v-w,i+v+w))\times\R_u^!((0,k),(-w,j+w))\ar[d,"\mathrm{juxtaposition}"]\\
        \pazocal{P}^i_k[v+w]\ar[r, "\cong"] &\R_u^!((0,k),(-v-w,i+v+w)),
    \end{tikzcd}
\end{equation}
 for $v,w>0$, as well as isomorphisms $\pazocal{P}^j_i[w]\otimes\pazocal{P}^i_i[0]\xrightarrow{\cong}\pazocal{P}^j_i[w]$ and $\pazocal{P}^j_j[0]\otimes\pazocal{P}^j_i[w]\xrightarrow{\cong}\pazocal{P}^j_i[w]$ exhibiting $\iota_i$ as a two-sided unit.
  \end{definition}
  
The first map is an injection on the left factor because  operations are stable under suspension and here $w\geq 0$, cf. \Cref{prop: stable}. The last map is the composition in the ringoid $\R_u^!$, i.e., juxtaposition corresponding to the Yoneda product on Ext groups.

Explicitly, when $p=2$, the $\F$-module $\pazocal{P}^k_j[w]$ consists of operations $R^{i_1,\ldots, i_w}$ such that $j-i_1-\ldots i_w=k$ and  $i_l-1>i_{l+1}+\cdots+i_w -j -(w-l)$ for all $1\leq l\leq w$, subject to the relations in $\R_u^!((0,j),(-w,k+w))$. The composition product sends $R^{(i_1,\ldots, i_v)}\circ R^{(j_1,\ldots, j_w)}$ to the juxtaposition $R^{(i_1,\ldots, i_v,j_1,\ldots, j_w)}.$ The weight $2$ operations are given by the collection $R^{i}\in\pazocal{P}^{j-i}_j[1]$ for any  $i>-j$. 
  \begin{theorem}\label{unarymod2}
   The homotopy groups of a spectral partition Lie algebra over $\F$, or the reduced TAQ cohomology of an $\E_\infty$-$\F$-algebra form a left module over the power ring $\pazocal{P}$ of unary operations. The relations among the weight $2$ operations are given by the Adem relations  $$R^a R^b=\sum_{a+b-c\geq 2c,\,\, c> -j}\binom{b-c-1}{a-2c} R^{a+b-c}R^c$$
in $\pazocal{P}^{j-a-b}_j[2]$ for all $a,b\in\mathbb{Z}$ satisfying $b-j\geq a< 2b$ and $b> -j$. 

A basis for unary operations on a degree $j$ class $x$ is given by the collection of all monomials $R^{i_1}R^{i_2}\cdots R^{i_l}$ such that $i_l>-j$ and $i_m\geq 2i_{m+1}$ for $1\leq m<l$.
  \end{theorem}
  
When $p>2$, the weight $p$ unary operations are given by the collection of elements $\beta^\epsilon R^{i}:=R^{(i,\epsilon)}\in\pazocal{P}^{j-2(p-1)i-\epsilon}_j$  for $\epsilon=0,1$ and any  $2i>-j$.

  \begin{theorem}\label{unaryodd}
    The homotopy groups of a spectral partition Lie algebra $A$ over $\Fp$, or the reduced TAQ cohomology of any $\E_\infty$-$\Fp$-algebra, form a module over the power ring $\pazocal{P}$ of unary operations. The relations among the weight $p$ operations are given by the  Adem relations
    \begin{equation*}
   \beta R^a \beta R^b=\sum_{a+b-c> pc, 2c>-j}(-1)^{a-c+1}\binom{(p-1)(b-c)-1}{a-pc-1} \beta R^{a+b-c} \beta R^c 
\end{equation*}
in $\pazocal{P}_j^{j-2(p-1)a-2(p-1)b-2}[2]$ for all $a,b\in\mathbb{Z}$ satisfying $a \leq pb$, $2b>-j$, and $2a>2(p-1)b-j,$
\begin{align*}
   R^a \beta R^b=&\sum_{a+b-c\geq pc,2c>-j}(-1)^{a-c}\binom{(p-1)(b-c)}{a-pc} \beta P^{a+b-c} R^c\\&-\sum_{a+b-c> pc, 2c>-j}(-1)^{a-c}\binom{(p-1)(b-c)-1}{a-pc-1}  R^{a+b-c}\beta R^c
\end{align*}
 in $\pazocal{P}_j^{j-2(p-1)a-2(p-1)b-1}[2]$ for all $a,b\in\mathbb{Z}$ satisfying  $a \leq pb$, $2b>-j$, and $2a>2(p-1)b+1-j$,
\begin{equation*}
   \beta^\epsilon R^a R^b=\sum_{a+b-c\geq pc, 2c>-j}(-1)^{a-c}\binom{(p-1)(b-c)-1}{a-pc} \beta^\epsilon R^{a+b-c}R^c
\end{equation*}
in $\pazocal{P}^{j-2(p-1)a-2(p-1)b-\epsilon}_j[2]$ for all $a,b\in\mathbb{Z}$ satisfying   $ a < pb$, $2b>-j$, $2a>2(p-1)b-j$, and $\epsilon\in\{0,1\}$.

A basis for unary operations on a degree $j$ class with $j$ odd is given by the collection of all monomials $\beta^{\epsilon_1}R^{i_1}\beta^{\epsilon_2}R^{i_2}\cdots\beta^{\epsilon_l}R^{i_l}$ such that $2i_l>-j$ and $i_m\geq pi_{m+1}+\epsilon_{m+1}$ for $1\leq m<l$. If $j$ is even, a basis is given by the collection $\beta^{\epsilon_1}R^{i_1}\beta^{\epsilon_2}R^{i_2}\cdots\beta^{\epsilon_l}R^{i_l}B^{\epsilon}$ such that $2i_l>-(1+\epsilon)j-\epsilon$  and $i_m\geq pi_{m+1}+\epsilon_{m+1}$ for $1\leq m<l$. Here $B$ stands for the self-bracket on an even degree class.
  \end{theorem}
In other words, \Cref{unarymod2} and \Cref{unaryodd} identify the monadic structure map of the algebraic approximation $\pazocal{L}_p$ of $\spla$ (\Cref{notn: L_p approx}) on elements that come from the power ring $\pazocal{P}$.  We start by showing that the composition product in $\pazocal{P}$ agrees with that of the unary operations on the $E^2$-page of the dual bar spectral sequence.
\begin{lemma}\label{lem: compositionE^2}
The composition product $\cup$ defined by the top arrow of the commutative diagram
     \begin{center}
     \begin{tikzcd}
      \free^{\R_u^!}\circ\free^{\R_u^!}(\Sigma^k\Fp)\ar[r]\ar[d,"\cong"] &\free^{\R_u^!}(\Sigma^k\Fp)\ar[d,"\cong"]\\\pazocal\pi_*(\B(\id, \pazocal{A}_{\R'},\pi_*(\B(\id, \pazocal{A}_{\R'}, \Sigma^{-k} \Fp)^{\vee})^{\vee})\ar[r] &\pi_*(\B(\id, \pazocal{A}_{\R'}, \Sigma^{-k} \Fp)^{\vee})
     \end{tikzcd}
\end{center}
is given by the sheared Yoneda product along the bottom, which agrees with the composition product (\ref{diagram: composition in P}) in $\pazocal{P}$ given in \Cref{powerring}. Namely, given elements $b\in \R'((0,j),(-v,i+v))\cong \mathrm{UnExt}^{-v,i+v}_{\R'}(\Fp,\Sigma^j\Fp)$ and $a\in \R'((0,k),(-w,j+w))\cong \mathrm{UnExt}^{-w,j+w}_{\R'}(\Fp,\Sigma^k\Fp),$ their composition $b\cup a$ is given by $$b\circ a |x_k\mapsto b|a|x_k\in\R_u^!((0,k),(-v-w,i+v+w))\cong\mathrm{UnExt}^{-v-w,i+v+w}_{\R'}(\Fp,\Sigma^k\Fp).$$
\end{lemma}
\begin{proof}
Recall from \Cref{cor: unext p=2} and \Cref{cor: unext p>2} that  $\free^{\R_u^!}(\Sigma^k\Fp)$ is  isomorphic to  $$\mathrm{UnExt}^{*,*}_{\R'}(\Fp,\Sigma^k\Fp)\cong \pi_*(\B(\id, \pazocal{A}_{\R'}, \Sigma^{-k} \Fp)^{\vee}).$$ 
We will make use of a general result of Brantner that follows from \cite[Theorem 3.5.1 and 4.3.2]{brantner}: Suppose that  $\mathbf{T}$ is an augmented additive monad on $\Mod_{\Fp}$ associated to the free (unstable) module functor over an algebra $R$. Then the composition map of the monad $|\B(\id,\mathbf{T}, -)|^{\vee}$, obtained by dualizing the comonad structure map $$|\B(\id,\mathbf{T}, -)|\xleftarrow{\simeq}|\B(\id,\mathbf{T}, |\B(\mathbf{T},\mathbf{T}, -)|)|\rightarrow|\B(\id,\mathbf{T}, |\B(\id,\mathbf{T}, -)|)|$$ of the comonad $|\B(\id,\mathbf{T}, -)|$,  is compatible with the Yoneda product on the (unstable) Ext groups over $R$ up to a shearing of the Ext groups, cf. \cite[proof of Lemma 4.3.3. and p. 135]{brantner}). 

Here we specialize to the situation where $\pazocal{A}_{\R'}$ is an additive monad associated with the free functor that takes the unstable module over the Koszul algebra $\R$. It follows that the top map is a sheared Yoneda product on (unstable) Ext groups. Explicitly,
given $b\in \R'((0,j),(-v,i+v))\cong \mathrm{UnExt}^{-v,i+v}_{\R'}(\Fp,\Sigma^j\Fp)$ and $a\in \R'((0,k),(-w,j+w))\cong \mathrm{UnExt}^{-w,j+w}_{\R'}(\Fp,\Sigma^k\Fp),$ the image along the top map is  $$b\circ a |x_k\mapsto b|a|x_k\in\R_u^!((0,k),(-v-w,i+v+w))\cong\mathrm{UnExt}^{-v-w,i+v+w}_{\R'}(\Fp,\Sigma^k\Fp)$$ via the composite
\begin{center}
    \begin{tikzcd}
      \R_u^!((0,j),(-v,i+v))\times\R_u^!((0,k),(-w,j+w))\ar[d,"\mathrm{susp}^w\times \id",hook]\\
        \R_u^!((0,j+w),(-v,i+v+w))\times\R_u^!((0,k),(-w,j+w))\ar[d,"\cong"]\\
         \R_u^!((-w,j+w),(-v-w,i+v+w))\times\R_u^!((0,k),(-w,j+w))\ar[d]\\
         \R_u^!((0,k),(-v-w,i+v+w)).
    \end{tikzcd}
\end{center}
The first map is an injection on the left factor because  operations are stable under suspension and here $w\geq 0$, cf. \Cref{prop: stable}. The last map is the composition in the ringoid $\R_u^!$, i.e. juxtaposition corresponding to the Yoneda product on unstable Ext groups. This is exactly the composition product in $\pazocal{P}$ defined in (\ref{diagram: composition in P}).
\end{proof}

Next we lift the composition product above to the image of the power ring $\pazocal{P}$ in the algebraic approximation monad $\pazocal{L}_p$.

\begin{lemma}\label{lem: composition Lp and P}
    The monad structure map of the algebraic approximation $\pazocal{L}_p$ of $\spla$ (cf. \Cref{notn: L_p approx}), restricted to unary operations coming from the power ring $\pazocal{P}$, is given by the composition product $\cup$ in \Cref{lem: compositionE^2}.
\end{lemma}
 
 \begin{proof}
 By construction, the set $\pazocal{P}^i_j[w]$ is isomorphic to the image  of $\R_u^!((0,j),(-w,w+i))$ in $\pi_i\spla(\Sigma^j \Fp)[w]$  via the collapse of the weight $p^w$ part of the dual bar spectral sequences on a single generator and the associated Grothendieck spectral sequences (\Cref{E2unary}, \Cref{E2unaryodd} and \Cref{cor: E2unaryeven}).
We need to verify that  compositions of  unary operations  on the homotopy groups of spectral partition Lie algebras is reflected by the composition product of the power ring $\pazocal{P}$.

For ease of notations, we will use $\mathbf{L}$ to denote the monad $\Lie^{\pi}_{\Fp,\E_{\infty}}$ throughout this proof. The unary operations  on the homotopy groups of algebras over $\mathbf{L}$, other than the restriction map when $p=2$ and the self-brackets on even classes when $p>2$, are concentrated in weights $p^n$ for $n\geq 1$ by construction. When $A$ is bounded above, they live in the homotopy groups of the summands $$\mathbf{L}[n](A):=(\partial_{p^n}(\mathrm{Id})\tens \Fp)\tens^{h\Sigma_{p^n}} (A)^{\tens p^n}\xhookrightarrow{\iota_{p^n}} \mathbf{L}(A)$$ by Proposition \ref{bm535}.   The composition $\beta\circ \alpha$ of two unary operations $$\alpha\in \pazocal{P}^j_k[w]\subseteq \pi_j\mathbf{L}[w](\Sigma^k \Fp)\cong \pazocal{L}_p(\Sigma^k \Fp)[w],\,\,\,\,\beta\in\pazocal{P}^i_j[v]\subseteq \pi_i\mathbf{L}[v](\Sigma^{j} \Fp)\cong\pazocal{L}_p(\Sigma^j \Fp)[v],$$ considered as maps $\alpha: \Sigma^j \Fp\rightarrow \pazocal{L}_p(\Sigma^k \Fp)[w]$ and $\beta:\Sigma^i \Fp\rightarrow\pazocal{L}_p(\Sigma^{j} \Fp)[v]$, is given by
  \begin{equation}\label{comp}
      \Sigma^i \Fp\xrightarrow{\beta}\pazocal{L}_p(\Sigma^j \Fp)[v]\xrightarrow{\pazocal{L}_p(\alpha)[v]}\pazocal{L}_p[v]\circ\pazocal{L}_p(\Sigma^k \Fp)[w]\rightarrow\pazocal{L}_p(\Sigma^k \Fp)[v+w].
  \end{equation}
 The last map is induced by the  weight $p^{v+w}$ summand 
 $$(\partial_{p^v}(\mathrm{Id})\tens \Fp)\tens^{h\Sigma_{p^v}}\big((\partial_{p^w}(\mathrm{Id})\tens \Fp)\tens^{h\Sigma_{p^w}} A^{\tens p^w})\big)^{\tens p^v}\rightarrow (\partial_{p^{(v+w)}}(\mathrm{Id})\tens \Fp)\tens^{h\Sigma_{p^{v+w}}} A^{\tens p^{v+w}}$$of the structure map of the monad $\mathbf{L}\circ\mathbf{L}\rightarrow\mathbf{L}$ on any bounded above object $A$.
   
   Let $a\in \R^!((0,k),(-w,j+w)),b\in \R^!((0,j),(-v,i+v))$ be the unique preimages under the collapse of the weight $p^w$ and weight $p^v$ parts of the dual bar spectral sequences converging respectively to $\pi_*\mathbf{L}(\Sigma^k \Fp)\cong \pazocal{L}_p(\Sigma^{k}\Fp)$ and $\pi_*\mathbf{L}(\Sigma^j \Fp)\cong \pazocal{L}_p(\Sigma^{j}\Fp)$, as well the collapses of the associated dual Grothendieck spectral sequence in both weights.
   
    Since $\pazocal{L}_p(\Sigma^{j}\Fp)$ is bounded above and of finite type, we can run the dual bar spectral sequence and the associated dual Grothendieck spectral sequence for the $\Fp$-module $A=\mathbf{L}(\Sigma^k \Fp)$ converging to $\pi_*\mathbf{L}\circ\mathbf{L}(\Sigma^k \Fp)\cong \pazocal{L}_p\circ \pazocal{L}_p(\Sigma^{j}\Fp)$, with summand $\pi_*\mathbf{L}(\Sigma^j\Fp)\cong\pazocal{L}_p(\Fp\{\alpha(x_k)\})$ picking out the free spectral partition Lie algebra on the generator represented by $\alpha (x_k)\in \pazocal{L}_p(\Fp\{x_k\})$ sitting in weight $p^w$. The weight $p^{v+w}$ part of both spectral sequences for this summand collapse on the $E^2$-page. Therefore, the monadic structure map $\pazocal{L}_p\circ\pazocal{L}_p(\Sigma^k \Fp)\rightarrow\pazocal{L}_p(\Sigma^k \Fp)$  induces a map of the $E^2$-pages of the associated dual Grothendieck spectral sequences.
In particular, the map (\ref{comp}) lives inside the summand $\free^{\R_u^!}\circ \free^{\R_u^!}(\Sigma^k\Fp)\rightarrow\free^{\R_u^!}(\Sigma^k\Fp)$, which  is
   given explicitly by $b|x_j\mapsto b\circ (a|x_k)\mapsto b| a|x_k$ by \Cref{lem: compositionE^2}.  Here we use $|$ to denote juxtaposition in $\R_u^!$ and $x_k$ the generator for $\Sigma^k\Fp$.
   Passing to the $E^\infty$-pages, we obtain a commutative diagram 
 \begin{equation}\label{diag: comp}
      \begin{tikzcd}
\pazocal{P}^i_j[v]\times \pazocal{P}^j_k[w]\ar[r,"*"]\ar[d,"\cong"]& \pazocal{P}^i_k[v+w]\ar[d,"\cong"]\\
      \R_u^!((0,j),(-v,i+v))\times \R_u^!((0,k),(-w,j+w))\ar[r,"\cup"]\ar[d,hook]& \R_u^!((0,k),(-v-w,i+v+w))\ar[d,hook]\\\pazocal{L}_p(\Sigma^{j} \Fp)[v]\times\pazocal{L}_p(\Sigma^{k} \Fp)[w]\ar[r,"\circ"]&\pazocal{L}_p(\Sigma^{k} \Fp)[v+w]
     \end{tikzcd}
 \end{equation}
as desired. The top square commutes by \Cref{lem: compositionE^2} and the bottom square by the degeneration of the weight $p^{v+w}$ part of the dual bar spectral sequences and the dual Grothendieck spectral sequences on the $E^2$-pages.
\end{proof}
\begin{lemma} \label{cor: betabottomadditive}
\begin{enumerate}
    \item If a unary operation in $\pazocal{P}^j_k[1]$ is in the image of the suspension map $\mathrm{susp}^1$ (\Cref{prop: stable}), then it is an additive operation. Therefore $R^i\in \pazocal{P}^j_*[1]$ is additive if $i-1>-j$ when $p=2$ and $\beta^\epsilon R^i\in \pazocal{P}_j^*[1]$ is additive if $i-1>-(j+1)/2$. 
    \item For $p>2$ and $j$ odd, the operation $\beta R^{(-j+1)/2}\in\pazocal{P}^{j-(p-1)(j+1)-1}_j[1]$ is additive.
    \end{enumerate}
\end{lemma}
 \begin{proof}
     (1). The first statement follows from taking the Spanier Whitehead dual of the $\Fp$-version of \cite[Lemma 4.2.21]{brantner}. Combining with \Cref{prop: stable}, we see that if $p>2$ and $x$ is a class in even degree or if $p=2$, then any weight $p$ operation defined on $x$ is in the image of $\mathrm{susp}^1$. Whereas if $p>2$ and $x$ has odd degree, then any weight $p$ operation that is not the bottom operation $\beta^\epsilon R^{(-j+1)/2}$   is in the image of $\mathrm{susp}^1$.

     (2). Let $x,y,z$ be the degree $j$ generators in $\pi_*(\spla(\Sigma^j\Fp\oplus\Sigma^j\Fp\oplus\Sigma^j\Fp))$. Suppose that $\beta R^{(-j+1)/2}(x+y)=\beta R^{(-j+1)/2}(x)+\beta R^{(-j+1)/2}(y)+B(x,y)$. Then $B(x,y)$ has to be a linear combination of Lie brackets of even degree $p(j-1)$  and not a unary operation acting on $x$ or $y$ for degree reasons (in particular it does not live in the degree of the bottom operation on degree $j$). Now consider the element $[z,\beta R^{(-j+1)/2}(x+y)]$, which vanishes by \Cref{vanish}. Hence $[z,B(x,y)]=0$ in the free shifted Lie algebra on letters $x,y,z$, which implies that $B(x,y)$ has to consists of self-brackets. But the degree of any self-bracket is odd, so $B(x,y)=0$.
\end{proof}

Now we assemble the three lemmas above to finish the proof of \Cref{unarymod2} and \Cref{unaryodd}.
\begin{proof}[Proof of Theorem 6.4 and 6.5]
It remains to verify the Adem relations and the bases. By \Cref{lem: compositionE^2}, the composition product $\pazocal{P}^{j-a-b}_{j-a}[1]\times \pazocal{P}^{j-a}_j[1]\rightarrow\pazocal{P}^{j-a-b}_j[2]$ sends $(R^{b},R^{a})$ to $R^{(b,a)}=(Q^{b-1})^*(Q^{a-1})^*$ for $p=2$, and $(\beta^{\epsilon_1}R^{b'-\epsilon_1},\beta^{\epsilon_2}R^{a'-\epsilon_2})$ to $$R^{(b'-\epsilon_1,a'-\epsilon_2, \epsilon_1,\epsilon_2)}=(\beta^{1-\epsilon_1}Q^{b'-1})^*(\beta^{1-\epsilon_2}Q^{a'-1})^* $$ for $p>2$ where $2(p-1)a'=a, 2(p-1)b'=b$. The two classes in the composition are defined whenever $a>-j+1$ and $b>a-j$ for $p=2$, and $2a'>-j$ and $2b'>a+\epsilon_2-j$ for $p>2$. Hence all the Adem relations hold.

When $p=2$, a basis for additive unary operations on a degree $j$ class is given by all monomials $$(Q^{i_1})^*(Q^{i_2})^*\cdots(Q^{i_s})^*\in\R_u^!((0,j),(s,m))$$ such that $i_s>-j$ and $i_l>2i_{l+1}$ for all $1\leq l<s$ by \Cref{E2unary}. Any such monomial is the image of the (well-defined) iterated composition $R^{i_1+1}R^{i_2+1}\cdots R^{i_s+1}$ in $\pazocal{P}^{j-m-s}_j[s]$.  Hence every additive unary operation $R^{(i_1,\ldots, i_s)}$ can be written as a linear combination of compositions $R^{j_1}\ldots R^{j_s}$ of operations in $\pazocal{P}^*_*[1]$. The case $p>2$ follows verbatim by using the degeneration in \Cref{E2unaryodd},  \Cref{lem: hfpsswtp} and \Cref{cor: E2unaryeven}.
 \end{proof}

 \begin{corollary}\label{algebra}
 When $j$  gets arbitrarily large, the algebra of unary operations on a degree $j$ homotopy class of a spectral partition Lie algebra, or equivalently a degree $-j$ class in mod $p$ TAQ cohomology, is the Koszul dual algebra of the mod $p$ Dyer-Lashof algebra.
 \end{corollary}
\begin{proof}
    By \Cref{prop: stable}, the suspension map $$\mathrm{susp}^t:\pazocal{P}^i_j[v]\cong\pazocal{R}^!_u((0,j),(-v,i+v))\rightarrow\pazocal{P}^{i+t}_{j+t}[v]\cong\pazocal{R}^!_u((0,j+t),(-v,i+v+t))$$ is injective for all $t>0$. Thus taking the colimit along $\mathrm{susp}^1$ for fixed $i-j$ and $v$, we obtain the universal unary operations of weight $v$ and degree $i-j$ as elements in $\tilde{\pazocal{P}}^{i-j}[v]$ of the power ring $\pazocal P$. Furthermore, the composition product of $\pazocal{P}$ (cf. \Cref{diag: comp}) is compatible with taking the colimit along $\mathrm{susp}^1$ in each variable, where the suspension map on the along bottom is induced by $\Sigma\pazocal{L}_p(\Sigma^j\Fp)\rightarrow\pazocal{L}_p(\Sigma^{j+1}\Fp)$. To identify the algebra generated by $\tilde{\pazocal{P}}^{i-j}[v]$ with the composition above with the Koszul dual of the mod $p$ Dyer-Lashof algebra, we use the identification of  $$\mathrm{UnExt}^{*,*}_{\R'}(\Fp,\Sigma^{j} \Fp)=\pi_*(\B(\id, \pazocal{A}_{\R'}, \Sigma^{-j} \Fp)^{\vee}),$$ as the free $\R^!_u$-module on a class in degree $(0,j)$ (\Cref{cor: unext p=2} and \Cref{cor: unext p>2}). In the colimit, the instability conditions on the class $\underset{j\rightarrow\infty}{\mathrm{colim}}\Sigma^{-j}\Fp$ as the trivial unstable module over $\R'$ becomes vacuous, so the unstable Ext group is isomorphic to the usual Ext group over the Dyer-Lashof algebra $\R$.
\end{proof}
 For $p>2$  one needs be careful about the precise duality. The Dyer-Lashof operation $Q^i$ is sent to $\beta R^i=\beta P^i$ in cohomological degree $2(p-1)i+1$ and $\beta Q^i$ to $R^i=P^i$ in cohomological degree $2(p-1)i$ since the Bockstein homomorphism increases cohomological degree by one.

\subsection{Generation}
Finally we put all the structures together to obtain the optimal target category for the homotopy group of spectral partition Lie algebras, or equivalently the reduced mod $p$ TAQ cohomology, which records the entire algebraic structure.

\begin{definition}\label{sLieP}
 A $\pazocal{P}$-$\Lie^{s,\rho}$-\textit{algebra} $L$ is a module over the power ring $\pazocal{P}$ (\Cref{powerring}), together with a shifted Lie bracket and a restriction $(-)^{\{p\}}$, that satisfies the following conditions:
 
 (1) If $p=2$, for all $j$ and $x\in\pi_j(A)$ the restriction $x^{\{2\}}$ is given by the bottom operation $R^{-j+1}(x)$. The restriction map on a sum of classes $x$ and  $y$ is given by $$(x+y)^{\{2\}}=x^{\{2\}}+ y^{\{2\}}+ [x,y].$$
 If $p>2$,  for all odd $j$ and $x\in\pi_j(A)$ the restriction $x^{\{p\}}$ is up to a unit $\lambda_j$ the bottom operation $R^{(-j+1)/2}(x)$. The restriction map on the sum of classes $x,y$ in degrees $j\neq k$ is given by $$(x+y)^{\{p\}}=\lambda_j R^{(-j+1)/2}(x)+\lambda_k R^{(-k+1)/2}(y)+\sum_{i=1}^{p-1} s_i(x,y),$$ where $s_i$ is the coefficient of $t^{i-1}$ in the formal expression $\mathrm {ad} (tx+y)^{p-1}(x)$;
 
 (2) The bracket $[y, \alpha (x)]$ vanishes for any $x,y\in L$ and  $\alpha$ a unary operation of weight greater than one, unless $\alpha$ is an iteration of the restriction map.
 
 Denote by $\Lie^{s,\rho}_{\pazocal{P}}$ the category of $\pazocal{P}$-$\Lie^{s,\rho}$-algebras.
\end{definition}

 The free $\pazocal{P}$-$\Lie^{s,\rho}$-algebra functor $\free^{\Lie^{s,\rho}_{\pazocal{P}}}$ on a $\Fp$-module $M$ can be computed as follows:  first we take the free shifted restricted Lie algebra over $\Fp$, then take the free $\pazocal{P}$-module on $\free^{\Lie^{s,\rho}_{\Fp}}(M)$. Then we identify the restriction  $x\mapsto x^{\{p\}}$ with the bottom operation $R^{-|x|+1}(x)$ when $p=2$ and the bottom operation $R^{(-|x|+1)/2}(x)$ up to a unit $\lambda_{|x|}$ for any odd degree $x$ when $p>2$. Finally we extend the shifted  Lie bracket and the restriction map to the quotient of $\free^{\pazocal{P}}\free^{\Lie^{s,\rho}_{\Fp}}(M)$ by the above identification,  subject to the conditions in Definition \ref{generation}. 

 By \Cref{unarymod2}, \Cref{unaryodd} and \Cref{compatible}, the homotopy group every spectral partition Lie algebra, or the reduced TAQ cohomology of any $\E_\infty$-$\Fp$-algebra, has the structure of an $\Lie^{s,\rho}_{\pazocal{P}}$-algebra. 
\begin{theorem}\label{generation}
The canonical map of $\pazocal{P}$-$\Lie^{s,\rho}$-algebras 
$$\alpha: \free^{\Lie^{s,\rho}_{\pazocal{P}}}(\pi_*(A))\rightarrow \pi_*(\Lie^{\pi}_{\Fp,\E_{\infty}}(A))$$ is an isomorphism when $A$ is any direct sum of shifts of $\Fp$'s. 
\end{theorem}
\begin{proof}
    For $A$ an infinite direct sum, write $A$ as the filtered colimit of $A_{[-n,n]}$, where $A_{[-n,n]}$ is the truncation of $A$ above degree $-n$ and below degree $n$. Now we use the fact that free functors are left adjoints and preserve colimits, while $\spla$ preserves filtered colimits by \Cref{def: spla}.
    Hence it suffices to show that $\alpha$ is an isomorphism in the cases where $A=\Sigma^{-j_1}\Fp\oplus\cdots\oplus \Sigma^{-j_k}\Fp$ is a finite sum of shifts of $\Fp$'s, and this will be achieved by finding a bijection between a basis for $\free^{\Lie^{s,\rho}_{\pazocal{P}}}(\pi_*(A))$ and the basis of $\pi_*(\Lie^{\pi}_{\Fp,\E_{\infty}}(A))$ given by Brantner-Mathew in \Cref{dimension}. 

    For $p=2$, a basis for the $\F$-vector space $\free^{\Lie^{s,\rho}_{\pazocal{P}}}(\pi_*(A))$ is given by the monomials 
$R^{i_1}R^{i_2}\cdots R^{i_l}u^{\{2\}^r}$ such that $i_l>-2^rj+2^r$ and $i_m\geq 2i_{m+1}$ for $1\leq m<l$, where and $u$ ranges over Lyndon words on letters $x_1, \ldots , x_k$ with $\mathrm{deg}(x_m)=j_m$. 
    Recall from \Cref{dimension} that the $\F$-vector space $\pazocal{L}_2(A)$ has a basis indexed by
sequences $(l_1,\ldots, l_k, u)$. 
The integers $l_1,\ldots, l_k$ satisfy $l_j < 2l_{j+1}$ for all $1 \leq j < k$ and $l_k \leq \mathrm{deg}(u)$.
The total degree of $(l_1,\ldots, l_k,u)$ is $ \mathrm{deg}(u) + (l_1-1) + \ldots + (l_k-1)$. The bijection between this basis and a basis of $\free^{\Lie^{s,\rho}_{\pazocal{P}}}(\pi_*(A))$ is given by sending a monomial 
$R^{i_1}\cdots R^{i_k}(u^{\{2\}^r})$ to the sequence $(-i_1+1, \ldots, -i_k+1, 2^{r-1}\mathrm{deg}(u)-2^{r-1}+1,\ldots,  2\mathrm{deg}(u)-1,\mathrm{deg}(u) ,u)$. 

For $p>2$, a basis for the $\Fp$-module $\free^{\Lie^{s,\rho}_{\pazocal{P}}}(\pi_*(A))$ is given by $\beta^{\epsilon_1}R^{i_1}\beta^{\epsilon_2}R^{i_2}\cdots\beta^{\epsilon_k}R^{i_k} (w)$ satisfying $2i_k> -\mathrm{deg}(w)$ and $i_l> p i_{l+1}-\epsilon$ for $1\leq l<k$ by \Cref{unaryodd}. Here $w$ is either a Lyndon word $u$ on letters $x_1,\ldots x_l$ with $\mathrm{deg}(x_i)=j_i$, or a self bracket on some Lyndon word $u$ with $\mathrm{deg}(u)$ even, in which case $\mathrm{deg}(w)=2\mathrm{deg}(u)-1$.   On the other hand, by \Cref{dimension}, the $\Fp$-vector space $\pazocal{L}_p(A)$ has a basis indexed by
sequences $(l_1,\ldots, l_k, e,u)$. Here $u$ is also a Lyndon word on letters letters $x_1,\ldots x_k$. We have $e \in \{0, \iota\}$, where
$\iota= 1$ if $p$ is odd and $\mathrm{deg}(u)$ is even; otherwise, $\iota=0$.
The integers $l_1,\ldots, l_k$ satisfy:
\begin{enumerate}
    \item Each $l_j$ is congruent to 0 or 1 modulo $2(p-1)$.
    \item For all $1 \leq j < k$, we have $l_j < pl_{j+1}$.
    \item We have $l_k \leq (p- 1)(1+ e) \mathrm{deg}(u)- \iota$. Note that equality is never achieved for parity reasons when $\mathrm{deg}(u)$ is odd and divisibility reasons (by $(p-1)$) when $\mathrm{deg}(u)$ is even.
\end{enumerate}
The degree of $(l_1,\ldots, l_k, e,u)$ is $((1 + e) \mathrm{deg}(u)- e) + l_1 + \ldots + l_k - k$.

The bijection is given as follows: given a monomial $\beta^{\epsilon_1}R^{i_1}\beta^{\epsilon_2}R^{i_2}\cdots\beta^{\epsilon_k}R^{i_k} (u)$, if $w=u$ is a Lyndon word, then we send it to the sequence $(-2(p-1)i_1+1-\epsilon_1,\ldots, -2(p-1)i_k+1-\epsilon_k,0,u)$; if $w=[u,u]$ with $\mathrm{deg}(u)$ even, then we send it to the sequence $(-2(p-1)i_1+1-\epsilon_1,\ldots, -2(p-1)i_k+1-\epsilon_k,1,u)$ in the same total degree. 
\end{proof}

Hence we have identified the algebraic approximation $\pazocal{L}_p$ of $\spla$ (cf. \Cref{notn: L_p approx}) as the monad associated to the free functor $\free^{\Lie^{s,\rho}_{\pazocal{P}}}$, as well as the target category for the homotopy groups of spectral partition Lie algebras and the  reduced  TAQ cohomology of any $\E_\infty$-$\Fp$-algebra.

\section{Operations on mod $p$ TAQ cohomology}
In the last section, we record a computation of all natural operations on  the mod $p$ TAQ cohomology $\mathrm{TAQ}_{\mathbb{S}}^*(-;\Fp)$ of $\E_\infty$-$\mathbb{S}$-algebras (cf. \Cref{def: FpTAQ}), as well as determining their relations. The results in this section are largely inspired by conversations with Tyler Lawson.

Recall from \cite[1.8]{lawson} that the mod $p$ TAQ homology of an $\E_\infty$-$\mathbb{S}$-algebra $R$ can be computed by
$$\mathrm{TAQ}^\mathbb{S}_*(R; \Fp)\simeq \pi_*(|\B(\Fp\tens \id, \E_\infty,R)|).$$ For any $\E^{\mathrm{nu}}_\infty$-$\mathbb{S}$-algebra $A$, the reduced mod $p$ TAQ cohomology of an $\E_\infty$-$\mathbb{S}$-algebra $\mathbb{S}\oplus A$ is the same as the mod $p$ TAQ cohomology groups  
$$\taq^{n}_\mathbb{S}(A;\Fp):= [\Sigma^{-n}|\B(\id, \E^{\mathrm{nu}}_\infty,A)|,  \Fp]_{\mathrm{Sp}}.$$ When $A$ is of finite type, i.e. the underlying $\Fp$-module of $A$ has finite-dimensional homotopy groups in each degree,  $\taq^{*}_\mathbb{S}(A;\Fp)\simeq \pi_{-*}(|\B(\Fp\tens\id, \E^{\mathrm{nu}}_\infty,A)|^\vee).$ Since all operations vanish on the unit of the mod $p$ TAQ cohomology except for multiplication by units, we will again compute operations on $\mathrm{TAQ}_\mathbb{S}^*(-;\Fp)$ by throwing away the base point and computing the dual bar spectral sequence on representing objects.

\begin{corollary}\label{slinear}
 Unary operations on a degree $j$ cohomology class in the reduced mod $p$ $\mathbb{S}$-linear TAQ cohomology $\taq_\mathbb{S}^*(A;\Fp)$ of $\E^{\mathrm{nu}}_\infty$-$\mathbb{S}$-algebras $A$ are parametrized by the free ${\pazocal{P}}$-$\Lie^{s,\rho}$-algebra $\free^{\Lie^{s,\rho}_{\pazocal{P}}}(\Sigma^{-j}\pazocal{A})$, where $\pazocal{A}$ is the mod p Steenrod algebra with homological grading. 
 
In general, for any tuple $(i_1,\ldots i_k)$, the $k$-ary cohomology operations $$\prod^k_{i=1}\mathrm{TAQ}_{\mathbb{S}}^{i_l}(- ;\Fp)\rightarrow\mathrm{TAQ}_{\mathbb{S}}^{m}(- ;\Fp)$$ away from the unit are parametrized by the homological degree $-m$ part of  $$\free^{\Lie^{s,\rho}_{\pazocal{P}}}(\Sigma^{-i_1}\pazocal{A}\oplus\cdots\oplus \Sigma^{-i_k}\pazocal{A}).$$
\end{corollary}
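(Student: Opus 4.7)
My plan is to use the base change formula together with Theorems \ref{maintheorem1}, \ref{maintheorem2}, and Corollary \ref{generation} to reduce the computation of operations on $\mathbb{S}$-linear TAQ cohomology to the $H\Fp$-linear case already established.

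First, by Yoneda representability, the $k$-ary cohomology operations in question are identified with $\mathrm{TAQ}^m$ of the representing trivial square-zero extension $R = \mathbb{S}\oplus\bigoplus_l\Sigma^{i_l}H\Fp$, and it suffices to compute the reduced version $\taq^m(\bigoplus_l\Sigma^{i_l}H\Fp,\mathbb{S};H\Fp)$. Starting from the definition $\taq^n(A,\mathbb{S};H\Fp) = [\Sigma^{-n}|\B(\id,\E^{\mathrm{nu}}_\infty,A)|,H\Fp]_{\mathrm{Sp}}$ as $H\Fp$-cohomology of a spectrum, I combine the adjunction $[X,H\Fp]_{\mathrm{Sp}} \cong [X\tens_{\mathbb{S}}H\Fp,H\Fp]_{\Mod_{H\Fp}}$ with the base change equivalence $|\B(\id,\E^{\mathrm{nu}}_\infty,A)|\tens_{\mathbb{S}}H\Fp\simeq |\B(\id,\mathbb{P},A\tens_{\mathbb{S}}H\Fp)|$ recalled in the paper to produce a natural isomorphism
\[\taq^n(A,\mathbb{S};H\Fp)\cong \taq^n(A\tens_{\mathbb{S}}H\Fp,H\Fp;H\Fp)\]
valid for $\E^{\mathrm{nu}}_\infty$-$\mathbb{S}$-algebras $A$ of finite type.

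Next, I apply this to $A=\bigoplus_l\Sigma^{i_l}H\Fp$. The key input is that $H\Fp\tens_{\mathbb{S}}H\Fp$ splits as an $H\Fp$-module into $\bigoplus_\alpha \Sigma^{|\alpha|}H\Fp$ indexed by a homogeneous basis of the dual Steenrod algebra, whose graded dimensions match those of $\pazocal{A}$ in the homological grading of the statement. This yields $A\tens_{\mathbb{S}}H\Fp\simeq \bigoplus_{l,\alpha}\Sigma^{i_l+|\alpha|}H\Fp$, a direct sum of shifts of $H\Fp$ regarded as a trivial $\mathbb{P}$-algebra. Then Theorems \ref{maintheorem1}, \ref{maintheorem2}, and in particular Corollary \ref{generation}, identify $\taq^{*}$ of this direct sum with the graded pieces of $\free^{\mathrm{sLie}^{\rho}_{\pazocal{P}}}(\bigoplus_{l,\alpha}\Sigma^{-i_l-|\alpha|}\Fp)$; regrouping the inner direct sum over $\alpha$ gives precisely $\free^{\mathrm{sLie}^{\rho}_{\pazocal{P}}}(\bigoplus_l\Sigma^{-i_l}\pazocal{A})$ as claimed. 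The vanishing of operations on the unit except for scalar multiplication is inherited from the corresponding vanishing on the $H\Fp$-linear side via the natural isomorphism.

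The main technical obstacle is that $A\tens_{\mathbb{S}}H\Fp$ is no longer a finite direct sum of shifts, since $H\Fp\tens_{\mathbb{S}}H\Fp$ is unbounded above as an $H\Fp$-module. I expect this will be handled by dualizing first: the $H\Fp$-linear dual of $A\tens_{\mathbb{S}}H\Fp$ is bounded above and of finite type in each homological degree (since $\pazocal{A}$ has finite-dimensional graded pieces), so Proposition \ref{bm535} and the degreewise form of Corollary \ref{generation} apply to this dual, producing the identification one bidegree at a time with only finitely many Steenrod basis elements entering each bidegree. The free-$\pazocal{P}$-$\mathrm{sLie}^{\rho}$ structure then assembles across all degrees, and together with the natural isomorphism above this gives the stated description of operations on $\mathbb{S}$-linear TAQ cohomology.
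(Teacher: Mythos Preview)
Your proposal is correct and follows essentially the same approach as the paper: both reduce to the $H\Fp$-linear case via representing objects and the base change formula, then identify $\Sigma^j H\Fp\tens_{\mathbb{S}}H\Fp$ as a direct sum of shifts of $H\Fp$ indexed by the dual Steenrod algebra and apply the main structure result for $\taq^*$ of trivial $\mathbb{P}$-algebras. The paper phrases the last step as running the dual bar spectral sequence in a ``limiting case'' of Propositions \ref{Einfty} and \ref{Einftyodd}, while you invoke Corollary \ref{generation} directly and are somewhat more explicit about the unboundedness issue; these amount to the same argument.
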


\begin{proof}
The representing objects for the  mod $p$ TAQ cohomology functor $\mathrm{TAQ}_{\mathbb{S}}^*(-;\Fp)$ are the trivial square-zero extensions $\mathbb{S}\oplus\Sigma^{i_1}\Fp\oplus\cdots \Sigma^{i_n}\Fp$ \cite[section 1.8]{lawson}. To compute the $k$-ary operations, we plug in the trivial algebras $\mathbb{S}\oplus\Sigma^{i_1}\Fp\oplus\cdots \Sigma^{i_n}\Fp$. It follows from the definitions that we have a base change formula
$$\mathrm{TAQ}_\mathbb{S}(R;\Fp)\tens\Fp\simeq \mathrm{TAQ}_{\Fp}(R\tens_{\mathbb{S}} \Fp).$$ It then follows from
 \Cref{generation} that $k$-ary operations on $\taq_{\Fp}^*(-)$ are parametrized by $$\pi_*\spla(\Sigma^{i_1}\Fp\tens\Fp\oplus\cdots\oplus \Sigma^{i_n}\Fp\tens \Fp) \cong \free^{\Lie^{s,\rho}_{\pazocal{P}}}(\Sigma^{-i_1}\pazocal{A}\oplus\cdots\Sigma^{-i_n}\pazocal{A}).$$ In particular, unary operations on a degree $j$ cohomology class in the reduced mod $p$ TAQ cohomology are parametrized by
 \begin{align*}
     \taq_{\Fp}^*(\Sigma^j \Fp\tens \Fp)\cong \pi_*(\spla(\Sigma^{-j} \Fp\tens \Fp)) \cong \free^{\Lie^{s,\rho}_{\pazocal{P}}}(\Sigma^{-j}\pazocal{A}).
 \end{align*}
\end{proof}

Finally we deduce the relations between the Steenrod operations and the unary $\Fp$-linear TAQ cohomology operations.

\begin{proposition}\label{slinearrelations}
The Steenrod operations commute with the bracket via the usual Cartan formula
$$Sq^a[x,y]=\sum_i[Sq^i(x),Sq^{a-i}(y)],\,\,\mathrm{for\ }p=2,$$
$$P^a[x,y]=\sum_i[P^i(x),P^{a-i}(y)],\,\,\,\,\beta P^a[x,y]=\sum_i([\beta P^i(x),P^{a-i}(y)]+[ P^i(x),\beta P^{a-i}(y)])$$ for $p>2$. They commute with unary $\Fp$-linear TAQ cohomology operations $R^i$ via the Nishida relations on mod $p$ cohomology, i.e.,
 $$Sq^a R^{-|x|+1} (x) = \sum \binom{j-c}{a-2c} R^{a+j+1-c} Sq^c  (x) + \sum_{l<k, l+k=a} [Sq^l (x) , Sq^k( x) ],$$
 $$ Sq^a R^b (x) = \sum \binom{b-1-c}{a-2c} R^{a+b-c} Sq^c  (x),\,\,\,\, b>-|x|+1$$
 for $p=2$. For $p>2$ we have  
 \begin{align*}
     P^n \beta R^j(x)=&(-1)^{n-i}\sum_i\binom{(j-i)(p-1)}{n-pi}\beta R^{n+j-i}P^i(x)\\
     &+(-1)^{n-i}\sum_i\binom{(j-i)(p-1)-1}{n-pi-1} R^{n+j-i}\beta P^i(x),
 \end{align*}
$$P^n R^j(x)=(-1)^{n-i}\sum_i\binom{(j-i)(p-1)-1}{n-pi}R^{n+j-i}P^i(x)$$
 for all $2j>-|x|+1$, as well as
\begin{align*}
    P^n R^{j}(x)=&(-1)^{n-i}\sum_i\binom{(j-i)(p-1)-1}{n-pi}R^{n+j-i}P^i(x)\\
    &+\frac{1}{\lambda_{|x|}}\sum_{I, \sigma\in\Sigma_p, \sigma(1)=1}[[\cdots[[P^{i_{\sigma(1)}}(x),P^{i_{\sigma(2)}}(x)],P^{i_{\sigma(3)}}(x)]\cdots], P^{i_{\sigma(p)}}(x)]
\end{align*}
when the degree of $x$ is odd and $2j=-|x|+1$, where the bracket term sums over all nondecreasing sequences $I=(0\leq i_1\leq i_2\leq\ldots\leq i_p)$ with $i_1+i_2+\cdots+i_p=n$ 
for $p>2$.
\end{proposition}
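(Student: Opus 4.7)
The plan is to reduce the problem to a universal computation on representing objects, lift the desired identities to the $E^2$-page of the dual bar spectral sequence of Corollary~\ref{slinear}, and then verify them by dualising the classical Nishida relations on the mod $p$ homology of $\E_\infty$-$H\Fp$-algebras.

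First, I would apply the base change equivalence $\mathrm{TAQ}(-,\Sp;H\Fp)\tens_{\Sp} H\Fp\simeq \mathrm{TAQ}(-\tens_{\Sp} H\Fp,H\Fp;H\Fp)$ together with naturality of the $R^i$ and Steenrod operations to reduce every identity in the statement to one about the universal class on the representing square-zero extension $\Sp\oplus\Sigma^j H\Fp$, equivalently about $\taq^*(\Sigma^j H\Fp\tens_{\Sp} H\Fp, H\Fp; H\Fp)$. Since the dual bar spectral sequence collapses on the $E^2$-page $\free^{\mathrm{sLie}^{\rho}_{\pazocal{P}}}(\Sigma^{-j}\pazocal{A})$ with no extension problems by Corollary~\ref{slinear}, any equality of operations on the universal class can be checked there directly.

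Second, on the $E^2$-page the mod $p$ Steenrod algebra acts through its standard action on $\Sigma^{-j}\pazocal{A}$ and extends over $\free^{\mathrm{sLie}^{\rho}_{\pazocal{P}}}(-)$ by the Cartan formulas for brackets stated just before the proposition, while $\pazocal{P}$ is the power ring of Koszul duals to Dyer-Lashof operations. For the generic range $b>-|x|+1$ (respectively $2j>-|x|+1$), I would verify the Nishida relations by dualising the classical formulas $P^n Q^b=\sum_i\binom{\cdot}{\cdot}Q^{n+b-i}P^i$ of \cite{nishidaodd} through the unstable Koszul duality pairing defining $(\R')^!$ in Definition~\ref{ringoid}, using the sheared Yoneda description of the composition product in $\pazocal{P}$ established in the proofs of Theorems~\ref{unarymod2} and~\ref{unaryodd}. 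Collapse of the spectral sequence lifts these relations to the $E^\infty$-page unchanged.

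The hard part will be the bottom-operation case $b=-|x|+1$ at $p=2$ and $2j=-|x|+1$ at odd primes. By Lemma~\ref{restricted} we have $R^{(-|x|+1)/2}(x)=\lambda_{|x|}^{-1}x^{[p]}$, and by Theorem~\ref{compatible} this bottom operation is not additive: $(x+y)^{[2]}=x^{[2]}+y^{[2]}+[x,y]$ at $p=2$ and $(x+y)^{[p]}=x^{[p]}+y^{[p]}+\sum_{i=1}^{p-1}\frac{s_i}{i}(x,y)$ at odd primes. To compute $Sq^a R^{-|x|+1}(x)$ (respectively $P^n R^{(-|x|+1)/2}(x)$), my plan is to interpret the bottom operation as a $p$-fold Massey product on $x$, distribute the Steenrod operation across the $p$ factors via the Cartan formula, and then reassemble using the restriction formula above. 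The Nishida main term $\sum_i\binom{\cdot}{\cdot}R^{n+j-i}P^i(x)$ is produced by the restriction of each individual summand $P^i(x)$, while the cross terms from the non-additive restriction formula produce the correction $\sum_{l<k,l+k=a}[Sq^l(x),Sq^k(x)]$ at $p=2$ and $\frac{1}{\lambda_{|x|}}\sum_{I,\sigma\in\Sigma_p,\sigma(1)=1}[[\cdots[P^{i_{\sigma(1)}}(x),P^{i_{\sigma(2)}}(x)]\cdots],P^{i_{\sigma(p)}}(x)]$ at odd primes. The main technical obstacle is bookkeeping the multinomial coefficients and signs that arise from expanding $\mathrm{ad}(tx+y)^{p-1}(x)$ against the Cartan expansion of $P^n$ at odd primes; I expect this to parallel the classical computations of Steenrod operations on extended power constructions originally carried out by Nishida.
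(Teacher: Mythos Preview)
Your overall approach matches the paper's: reduce to representing objects, use the collapsed dual bar spectral sequence from Corollary~\ref{slinear}, and dualise the classical Nishida relations (the paper cites \cite{nishida} for $p=2$ and \cite{nishidaodd} for $p>2$) in the generic range. Both arguments then handle the bottom-operation case by recognising $\lambda_{|x|}R^{(-|x|+1)/2}(x)=x^{[p]}$, expressing this as a $p$-fold product, applying the Cartan formula, and identifying the cross terms as iterated brackets.

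Where your proposal is less precise is the mechanism for the bottom case at odd primes. Invoking the formula $(x+y)^{[p]}=x^{[p]}+y^{[p]}+\sum_i s_i(x,y)/i$ from Theorem~\ref{compatible} does not by itself convert a Cartan expansion $\sum_{i_1+\cdots+i_p=n} P^{i_1}(x)\cdots P^{i_p}(x)$ into iterated brackets: that formula concerns the restriction of a sum of two elements, not a product of $p$ possibly distinct ones. The paper instead appeals to Fresse's explicit description \cite[Remark~1.2.8]{fresse} of the restriction via the embedding $\mathrm{Lie}\hookrightarrow\mathrm{Assoc}$: the operadic identity
\[
\sum_{\sigma\in\Sigma_p} X_{\sigma(1)}\cdots X_{\sigma(p)}=\sum_{\sigma\in\Sigma_p,\ \sigma(1)=1}\langle\langle\cdots\langle X_{\sigma(1)},X_{\sigma(2)}\rangle,X_{\sigma(3)}\rangle\cdots, X_{\sigma(p)}\rangle
\]
in $\mathrm{Assoc}(p)$ exhibits $x^{[p]}$ as the pullback of the associative $p$th power. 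One applies the Cartan formula to the honest $p$th power in the associative setting, then pulls back along this identity; the bracket terms indexed by $\sigma(1)=1$ and nondecreasing $I$ fall out directly, with no multinomial bookkeeping against $\mathrm{ad}(tx+y)^{p-1}(x)$ required. Your Massey-product heuristic is gesturing at the same phenomenon, but you would need this operadic identity (or something equivalent) to make the ``reassemble'' step rigorous.
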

Recall that $\lambda_{|x|}$ is the unit by which bottom operation on an odd degree class $x$ differs from the restriction $x^{\{p\}}$ on $x$, cf. \Cref{restricted}. 
\begin{remark}
 Note that the commuting relations between the Steenrod operations and the TAQ cohomology operations $R^i$ coincide with the Adem relations for Steenrod algebras, thereby reinforcing the heuristics that the operations $R^i$ are extended Steenrod operations.
\end{remark}

\begin{proof}[Proof of Proposition 6.2]
We will derive the commuting relations from the $E^2$-page of the dual bar spectral sequence on a single cohomology class in degree $j$.
Since free functors preserve colimits, it follows from the limiting case of Proposition \ref{Einfty} and \ref{Einftyodd} that the dual bar spectral sequence converging to
$$\taq^{-*}_{\Fp}(\taq_{\Fp}(\Sigma^j\Fp\tens\Fp)\tens\Fp)\cong \pi_*(\spla(\spla(\Sigma^{-j}\Fp\tens\Fp))\tens\Fp)$$
collapses on the $E^2$-page with
$$E^\infty \cong E^2 \cong \free^{\Lie^{s,\rho}_{\pazocal{P}}}(\free^{\Lie^{s,\rho}_{\pazocal{P}}}(\Sigma^{-j}\pazocal{A})\tens\Fp).$$  Note that the Steenrod operations are the linear dual of the Dyer-Lashof operations \cite{Einfinity}, which satisfy the Cartan formula with respect to the polynomial product (\Cref{def: polyR}). Since the generator of the shifted Lie bracket on the $E^2$-page is represented by the linear dual of the polynomial product on the $E^1$-page, it commutes with the Steenrod operations via the Cartan formula. On the other hand, the operations $R^i= (Q^{i-1})^*$ come from the linear dual of the Dyer-Lashof operations $Q^{i-1}$, so the Steenrod operations commute with $R^i$ via the Nishida relations on cohomology. When $p=2$ the relations are worked out explicitly, for example, by Miller in
 \cite{nishida}. The Nishida relations for applying a Steenrod operation to the bottom operation on $x$ involves an extra bracket term because the bottom operation is the restriction on $x$.
 
 For $p>2$,  the Nishida relations on cohomology can be read off from Theorem 3 and its corollary in Nishida's original paper \cite{nishidaodd}:
$$P^n \beta R^j=(-1)^{n-i}\sum_i\binom{(j-i)(p-1)}{n-pi}\beta R^{n+j-i}P^i+(-1)^{n-i}\sum_i\binom{(j-i)(p-1)-1}{n-pi-1} R^{n+j-i}\beta P^i,$$
$$P^n R^j=(-1)^{n-i}\sum_i\binom{(j-i)(p-1)-1}{n-pi}R^{n+j-i}P^i.$$
Analogous to the case $p=2$, when $x$ is a class in odd degree, the Nishida relations for the Steenrod action on the bottom class $P^n R^{(-|x|+1)/2}(x)$ involve Lie bracket terms in addition to monomials of unary operations, since $\lambda_{|x|}R^{(-|x|+1)/2}(x)$ is the restriction on $x$. 

In order to determine the extra bracket terms, we need an explicit expression for the restriction map on an odd class. In the setting of unshifted graded $\Fp$-modules, this is worked out by Fresse in \cite[Remark 1.2.8]{fresse}. Note that there is an embedding of the Lie operad into the associative operad Assoc. Furthermore, there is an identity
\begin{equation}\label{ass}
    \sum_{\sigma\in\Sigma_p}X_{\sigma(1)}\cdots X_{\sigma(p)}=\sum_{\sigma\in\Sigma_p, \sigma(1)=1}\langle\langle\cdots\langle\langle X_{\sigma(1)},X_{\sigma(2)}\rangle,X_{\sigma(3)}
\rangle\cdots\rangle, X_{\sigma(p)}\rangle
\end{equation}
in the associative operad, where $\langle x, y\rangle=xy-yx$ is the commutator. For $x\in V$ in even degree, the $p$th power on $x$ is given by $$\sum_{\sigma\in\Sigma_p}X_{\sigma(1)}\cdots X_{\sigma(p)}\tens x^{\tens p}\in (\mathrm{Assoc}(p)\tens V^{\tens p})^{\Sigma_p}\cong(\mathrm{Assoc}(p)\tens V^{\tens p})_{\Sigma_p}.$$ Using the identity (\ref{ass}), we can pull back the $p$th power on $x$ along the embedding $$(\Lie(p)\tens V^{\tens p})^{\Sigma_p}\hookrightarrow(\mathrm{Assoc}(p)\tens V^{\tens p})^{\Sigma_p}.$$  The resulting element is the restriction on $x$ in the free restricted Lie algebra on $V$, i.e.,
\begin{equation}\label{formulaforrestriction}
    x^{\{p\}}=\Big(\sum_{\sigma\in\Sigma_p, \sigma(1)=1}[[\cdots[[X_{\sigma(1)},X_{\sigma(2)}],X_{\sigma(3)}]\cdots], X_{\sigma(p)}]\Big)\tens x^{\tens p}\in (\Lie(p)\tens V^{\tens p})^{\Sigma_p}.
\end{equation}

Since we are working with shifted graded $\Fp$-modules, the commutator in the shifted graded associative algebra is $\langle x, y\rangle=xy-(-1)^{(|y|-1)(|x|-1)}yx$. If $x,y$ are both in odd degrees, then $\langle x, y\rangle=xy-yx$. Hence the identity (\ref{ass}) pulls back to the restriction map (\ref{formulaforrestriction}) on an odd class $x$ in the free shifted graded restricted Lie algebra over $\Fp$. Now we apply the Steenrod operation $P^n$ to the $p$th power on $x$ and use the Cartan formula. Note that the Steenrod operations $P^a$ raises degree by an even number, so none of the signs are altered. Pulling back to the free shifted restricted Lie algebra, we deduce that the bracket terms in the Nishida relation for $P^n R^{(-|x|+1)/2}(x)$  consists of terms $$\frac{1}{\lambda_{|x|}}\sum_{\sigma\in\Sigma_p, \sigma(1)=1}[[\cdots[[P^{i_{\sigma(1)}}(x),P^{i_{\sigma(2)}}(x)],P^{i_{\sigma(3)}}(x)]\cdots], P^{i_{\sigma(p)}}(x)]$$
for all nondecreasing sequences $0\leq i_1\leq i_2\leq\ldots\leq i_p$ with $i_1+i_2+\cdots+i_p=n$.
\end{proof}


\begin{thebibliography}{50}
\bibitem[AC20]{omar}{Antolín Camarena, O. (2020). The mod 2 homology of free spectral Lie algebras. \textit{Transactions of the American Mathematical Society, 373}(9), 6301-6319.}



\bibitem[AB21]{young}{Arone, G. Z., \& Brantner, D. L. B. (2021). The action of Young subgroups on the partition complex. \textit{Publications mathématiques de l'IHÉS, 133}(1), 47-156.}

\bibitem[ADL13]{ADL}{Arone, G. Z., Dwyer, W. G., \& Lesh, K. (2013). Bredon homology of partition complexes. \textit{arXiv preprint arXiv:1306.0056}.}

\bibitem[AM99]{am}{Arone, G. \&  Mahowald, M. (1999). The Goodwillie tower of the identity functor and the unstable periodic homotopy of spheres. \textit{Inventiones mathematicae, 135}(3), 743-788.}

\bibitem[Aro06]{arone}{Arone, G. (2006). A note on the homology of {$\Sigma_n$}, the Schwartz genus, and solving polynomial equations. \textit{Contemporary Mathematics}, 399, 1.}

\bibitem[Bas99]{taq}{Basterra, M. (1999). André–Quillen cohomology of commutative S-algebras. \textit{Journal of Pure and Applied Algebra, 144}(2), 111-143.}

\bibitem[Beh12]{behrens}{Behrens, M. (2012). \textit{The Goodwillie tower and the EHP sequence} (Vol. 218, No. 1026). American Mathematical Soc..}

\bibitem[BR20]{behrensrezk}{Behrens, M., \& Rezk, C. (2020). The {B}ousfield-{K}uhn functor and topological {A}ndr\'{e}-{Q}uillen cohomology. \textit{Inventiones mathematicae, 220}(3), 949-1022.}

\bibitem[Boa99]{boardman}{Boardman, J. M. (1999). Conditionally convergent spectral sequences. \textit{Contemporary Mathematics, 239}, 49-84.}

\bibitem[Bou68]{bousfield}{Bousfield, A. K. (1968). \textit{Operations on derived functors of non-additive functors.} Brandeis Univ.. Available at \url{https://math.mit.edu/~hrm/manuscripts/bousfield-operations.pdf}.}

\bibitem[BC70]{unstable}{Bousfield, A. K., \& Curtis, E. B. (1970). A spectral sequence for the homotopy of nice spaces. \textit{Transactions of the American Mathematical Society, 151}(2), 457-479.}

\bibitem[BKS05]{restrictedbasis}{Bryant, R. M., Kovács, L. G., \& Stöhr, R. (2005). Subalgebras of free restricted Lie algebras. \textit{Bulletin of the Australian Mathematical Society, 72}(1), 147-156.}



\bibitem[BMMS86]{bmms}{Bruner, R. R., May, J. P., McClure, J. E., \& Steinberger, M. (2006). \textit{H ring spectra and their applications} (Vol. 1176). Springer.}

\bibitem[BHK19]{bhk}{Brantner, L., Hahn, \& J.,  Knudsen, B. (2019). The Lubin-Tate Theory of Configuration Spaces: I. \textit{arXiv preprint arXiv:1908.11321}.}

\bibitem[Br17]{brantner}{Brantner, D. L. B. (2017). \textit{The Lubin-Tate theory of spectral Lie algebras} (Doctoral dissertation).}

\bibitem[BCN21]{pd}{Brantner, L., Campos, R., \& Nuiten, J. (2021). PD Operads and Explicit Partition Lie Algebras. \textit{arXiv preprint arXiv:2104.03870}.}

\bibitem[BM19]{bm}{Brantner, L., \& Mathew, A. (2019). Deformation theory and partition Lie algebras. To appear in \textit{Acta Mathematica}.}

\bibitem[Car58]{cartan}{ Cartan, H. (1958). \textit{Séminaire Henri Cartan: ann. 7 1954/1955; Algèbres d'Eilenberg-Maclane et homotopie.} Secrétariat mathématique.}

\bibitem[Chi05]{ching}{Ching, M. (2005). Bar constructions for topological operads and the Goodwillie derivatives of the identity. \textit{Geometry \& Topology, 9}(2), 833-934.}

\bibitem[CH19]{chingharper}{Ching, M., \& Harper, J. E. (2019). Derived Koszul duality and TQ-homology completion of structured ring spectra. \textit{Advances in Mathematics, 341}, 118-187.
}


\bibitem[CLM76]{adem}{Cohen, F. R., Lada, T. J., \& May, P. J. (2007). \textit{The homology of iterated loop spaces} (Vol. 533). Springer.}

\bibitem[DL62]{dyerlashof}{Dyer, E., \& Lashof, R. K. (1962). Homology of iterated loop spaces. \textit{American Journal of Mathematics, 84}(1), 35-88.}

\bibitem[Dri]{drinfeld}{Drinfeld, V. (2014). A letter from Kharkov to Moscow. \textit{EMS Surveys in Mathematical Sciences, 1}(2), 241-248.}

\bibitem[Dwy80]{dwyer}{Dwyer, W. G. (1980). Homotopy operations for simplicial commutative algebras. \textit{Transactions of the American Mathematical Society, 260}(2), 421-435.}

\bibitem[FG12]{fg}{Francis, J., \& Gaitsgory, D. (2012). Chiral Koszul duality. \textit{Selecta Mathematica, 18}(1), 27-87.}

\bibitem[Fre00]{fresse}{Fresse, B. (2000). On the homotopy of simplicial algebras over an operad. \textit{Transactions of the American Mathematical Society, 352}(9), 4113-4141.}

\bibitem[Fre04]{fresse2}{Fresse, B. (2004). Koszul duality of operads and homology of partition posets. In Homotopy theory: relations with algebraic geometry, group cohomology and algebraic K-theory, \textit{Contemp. Math. 346}, Amer. Math. Soc. pp. 112-215.}

\bibitem[GK94]{gk}{Ginzburg, V., \& Kapranov, M. (1994). Koszul duality for operads. \textit{Duke mathematical journal, 76}(1), 203-272.}

\bibitem[Goe90]{goerss}{Goerss, P. G. (1990). On the André-Quillen cohomology of commutative $\F$-algebras. \textit{Astérisque}, (1).}



\bibitem[Jac41]{jacobson}{Jacobson, N. (1941). Restricted Lie algebras of characteristic p.  \textit{Transactions of the American Mathematical Society, 50}(1), 15-25.}

\bibitem[Joh95]{johnson}{Johnson, B. (1995). The derivatives of homotopy theory. \textit{Transactions of the American Mathematical Society, 347}(4), 1295-1321.}

\bibitem[JN14]{jn}{Johnson, N. \&  Noel, J. (2014). Lifting homotopy T-algebra maps to strict maps. \textit{Advances in Mathematics, 264}, 593-645.}



\bibitem[Kja18]{kjaer}{Kjaer, J. J. (2018). On the odd primary homology of free algebras over the spectral Lie operad. \textit{Journal of Homotopy and Related Structures, 13}(3), 581-597.}

\bibitem[Kon23]{konovalov}{Konovalov, N. (2023). Algebraic Goodwillie spectral sequence. \textit{arXiv preprint arXiv:2303.06240.}}

\bibitem[KL83]{dual}{Kraines, D., \& Lada, T. (1983). The cohomology of the Dyer-Lashof algebra. In \textit{Proceedings of the Northwestern Homotopy Theory Conference} (pp. 145-152).}

\bibitem[Kri93]{kriz}{Kriz,Igor. (1993). Towers of $E_\infty$ ring spectra with an application to $BP$. Preprint.}


\bibitem[KA56]{kudoaraki}{Kudo, T., \& Araki, S. (1956). Topology of {$H_n$}-spaces and {$H$}-squaring operations. \textit{Memoirs of the Faculty of Science, Kyushu University. Series A, Mathematics, 10}(2), 85-120.}

\bibitem[Kuh85]{transfer}{Kuhn, N. J. (1985). Chevalley group theory and the transfer in the homology of symmetric groups. \textit{Topology, 24}(3), 247-264.}

\bibitem[Kuh01]{kuhncofib}{Kuhn, N. J. (2001). New relationships among loopspaces, symmetric products, and Eilenberg MacLane spaces. In \textit{Cohomological Methods in Homotopy Theory: Barcelona Conference on Algebraic Topology, Bellaterra, Spain, June 4–10, 1998} (pp. 185-216). Birkhäuser Basel.}

\bibitem[Law20]{lawson}{Lawson, T. (2020). $E_n$-spectra and Dyer-Lashof operations. In \textit{ Handbook of Homotopy Theory} (pp. 793-849). Chapman and Hall/CRC.}

\bibitem[Lur11]{lurie11}{Luire, J. (2011). Derived algebraic geometry X: Formal moduli problems, preprint available from the authors website.}

\bibitem[May72]{Einfinity}{May, J. P. (1972). \textit{The geometry of iterated loop spaces.} Lecture Notes in Math., Vol. 271,
SpringerVerlag,
BerlinNew
York, 1972.}

\bibitem[Lur17]{lurie}{Lurie, J. (2017). Higher algebra, preprint available from the authors website.}


\bibitem[McC01]{mccleary}{McCleary, J. (2001). \textit{A user's guide to spectral sequences} (No. 58). Cambridge University Press.}

\bibitem[Mil16]{nishida}{Miller, H.R. Nishida relations and Singer construction. \url{https://klein.mit.edu/~hrm/papers/nishida3.pdf}.}


\bibitem[Nak57]{nak57}{Nakaoka, M. (1957). Cohomology mod p of the p-fold symmetric products of spheres. \textit{Journal of the Mathematical Society of Japan, 9}(4), 417-427.}

\bibitem[Nak58]{nak58}{Nakaoka, M. (1958). Cohomology mod $ p $ of symmetric products of spheres. \textit{Journal of the Institute of Polytechnics, Osaka City University. Series A: Mathematics, 9}(1), 1-18.}

\bibitem[Nis68]{nishidaodd}{Nishida, G. (1968). Cohomology operations in iterated loop spaces. \textit{Proceedings of the Japan Academy, 44}(3), 104-109.
}


\bibitem[Pri70]{priddy}{Priddy, S. B. (1970). Koszul resolutions. \textit{Transactions of the American Mathematical Society, 152}(1), 39-60.}

\bibitem[Pri73]{priddy2}{Priddy, S. (1973). Mod-p right derived functor algebras of the symmetric algebra functor. \textit{Journal of Pure and Applied Algebra, 3}(4), 337-356.}

\bibitem[Pri10]{pridham}{Pridham, J. P. (2010). Unifying derived deformation theories. \textit{Advances in Mathematics, 224}(3), 772-826.}

\bibitem[Rez12]{rezk}{Rezk, C. (2012). Rings of power operations for Morava {$E$}-theories are Koszul. \textit{arXiv preprint arXiv:1204.4831.}}

\bibitem[Sal98]{salvatore}{Paolo Salvatore, \textit{Configuration operads, minimal models and rational curves.}, Ph.D. thesis, University
of Oxford, 1998.}

\bibitem[Tak99]{takayasu}{Shinichiro Takayasu, On stable summands of Thom spectra of {$B(\mathbb{Z}/2)^n$} associated to Steinberg modules,
\textit{J. Math. Kyoto Univ. 39} (1999), no. 2, 377–398. }

\bibitem[Zh21]{me}{Zhang, A. Y. (2021). Quillen homology of spectral Lie algebras, with application to mod $p$ homology of labeled configuration spaces. To appear in \textit{Algebraic and Geometric Topology.}}

\end{thebibliography}
\end{document}